\newtheorem{theorem}             {Theorem}  [section]
\newtheorem{definition} [theorem] {Definition}
\newtheorem{lemma}      [theorem]{Lemma}
\newtheorem{corollary}  [theorem]{Corollary}
\newtheorem{proposition}[theorem]{Proposition}
\newtheorem{remark} [theorem] {Remark}
\numberwithin{equation}{section} \everymath{\displaystyle}
\newcommand{\Cont}{{\rm C}}
\newcommand{\Aut}{\mathcal{A}}
\newcommand{\Sch}{\mathcal{S}}
\newcommand{\intL}{{\rm L}}
\newcommand{\Ht}{{\rm Ht}}
\newcommand{\Nr}{{\rm Nr}}
\newcommand{\BesselK}{\mathcal{K}}
\newcommand{\hol}{{\rm hol}}
\newcommand{\gp}[1]{\mathbf{#1}}
\newcommand{\GL}{{\rm GL}}
\newcommand{\PGL}{{\rm PGL}}
\newcommand{\PSL}{{\rm PSL}}
\newcommand{\SO}{{\rm SO}}
\newcommand{\PSO}{{\rm PSO}}
\newcommand{\SU}{{\rm SU}}
\newcommand{\grG}{\mathbf{G}}
\newcommand{\grK}{\mathbf{K}}
\newcommand{\grZ}{\mathbf{Z}}
\newcommand{\grB}{\mathbf{B}}
\newcommand{\grN}{\mathbf{N}}
\newcommand{\ag}[1]{\mathbb{#1}}
\newcommand{\N}{\mathbb{N}}
\newcommand{\Z}{\mathbb{Z}}
\newcommand{\Q}{\mathbb{Q}}
\newcommand{\R}{\mathbb{R}}
\newcommand{\C}{\mathbb{C}}
\newcommand{\F}{\mathbf{F}}
\newcommand{\A}{\mathbb{A}}
\newcommand{\vo}{\mathfrak{o}}
\newcommand{\vp}{\mathfrak{p}}
\newcommand{\idlA}{\mathfrak{A}}
\newcommand{\ProjP}{{\rm P}}
\newcommand{\norm}[1][\cdot]{\lvert #1 \rvert}
\newcommand{\extnorm}[1]{\left\lvert #1 \right\rvert}
\newcommand{\Norm}[1][\cdot]{\lVert #1 \rVert}
\newcommand{\extNorm}[1]{\left\lVert #1 \right\rVert}
\newcommand{\Pairing}[2]{\langle #1, #2 \rangle}
\newcommand{\Four}[2][]{\mathfrak{F}_{#1}(#2)}
\newcommand{\Mellin}[2][]{\mathfrak{M}_{#1}(#2)}
\newcommand{\rpR}{{\rm R}}
\newcommand{\Res}{{\rm Res}}
\newcommand{\Ind}{{\rm Ind}}
\newcommand{\Intw}{\mathcal{M}}
\newcommand{\IntwR}{\mathcal{R}}
\newcommand{\Whi}{\mathcal{W}}
\newcommand{\Kir}{\mathcal{K}}
\newcommand{\Cond}{\mathbf{C}}
\newcommand{\fin}{{\rm fin}}
\newcommand{\eis}{{\rm E}}
\newcommand{\Reis}{\mathcal{E}}
\newcommand{\eisCst}{{\rm E}_{\grN}}
\newcommand{\reg}{{\rm reg}}
\newcommand{\freg}{{\rm fr}}
\newcommand{\pH}{\mathbb{H}}
\newcommand{\Vol}{{\rm Vol}}
\newcommand{\Rmnum}[1]{\expandafter\@slowromancap\romannumeral #1@}
\newcommand{\Ex}{\mathcal{E}{\rm x}}
\title{Burgess-like Subconvexity for $\GL_1$}
\author{Han Wu}
\thanks{Research partially supported by SNF-grant 200021-125291 and DFG-SNF-grant 00021L\_153647 \\ 2010 Mathematics Subject Classification [MSC] codes: 11R42 \\ Keywords: subconvexity, $L$-function over number field, Burgess-type hybrid bound}
\begin{document}
	
	\begin{abstract}
		We generalize our previous method on subconvexity problem for $\GL_2 \times \GL_1$ with cuspidal representations to Eisenstein series, and deduce a Burgess-like subconvex bound for Hecke characters, i.e., the bound $\norm[L(1/2,\chi)] \ll_{\F,\epsilon} \Cond(\chi)^{1/4-(1-2\theta)/16+\epsilon}$ for varying Hecke characters $\chi$ over a number field $\F$ with analytic conductor $\Cond(\chi)$. As a main tool, we apply the extended theory of regularized integral due to Zagier developed in a previous paper to obtain the relevant triple product formulas of Eisenstein series.
	\end{abstract}
	
	\maketitle
	
	\tableofcontents

\section{Introduction}

	\subsection{Statement of Main Result}
	
	If $\pi$ is a (cuspidal) automorphic representation of $\GL_d$ over a number field $\F$ with (usual) conductor $\Cond(\pi_{\fin})$ resp. archimedean analytic conductor $\Cond(\pi_{\infty})$ resp. analytic conductor $\Cond(\pi) = \Cond(\pi_{\infty})\Cond(\pi_{\fin})$, the absolute convergence for $\Re s > 1$ of the associated $L$-function $L(s,\pi)$ and the functional equation implies for any $\epsilon > 0$, via the Phragm\'en-Lindel\"of principle together with Iwaniec's method \cite[(19.16)]{DFI02} \footnote{The cited argument only treats the case for $\F=\Q$ but it works for general number fields by replacing the relevant divisor function by the one for ideals of the ring of algebraic integers.} to establish the necessary bound on the vertical line with $\Re s = 1+\epsilon$ with small $\epsilon > 0$, the estimation at the central point
	$$ \extnorm{L(1/2,\pi)} \ll_{\F, \epsilon} \Cond(\pi)^{1/4+\epsilon}, $$
called the \emph{convex bound} or the \emph{convexity}. If the Riemann Hypothesis holds for $L(s,\pi)$, then we have the optimal bound
	$$ \extnorm{L(1/2,\pi)} \ll_{\F, \epsilon} \Cond(\pi)^{\epsilon}, $$
called the \emph{Lindel\"of Hypothesis}. Reducing the exponent of $\Cond(\pi)$ from $1/4+\epsilon$ to $1/4-\delta+\epsilon$ for some positive constant $0< \delta < 1/4$ is called the \emph{subconvexity problem}. More generally, for $Q = \Cond(\pi)$ resp. $\Cond(\pi_{\fin})$ resp. $\Cond(\pi_{\infty})$, an estimation
	$$ \extnorm{L(1/2,\pi)} \ll_{\F, \epsilon, \Cond(\pi)/Q} Q^{1/4-\delta+\epsilon} $$
is called a \emph{(hybrid) subconvex bound} resp. \emph{subconvex bound in the level aspect} resp. \emph{subconvex bound in the archimedean aspect}.

	In the simplest case, the first and most famous subconvex bound was obtained for the Riemann zeta-function by Weyl \cite{Wyl21} (see for example \cite[\S 6.6]{Pa88})
	$$ \zeta(1/2+it) \ll_{\epsilon} \norm[t]^{1/4-1/12+\epsilon}, \quad t \in \ag{R}, $$
which can be considered as (a special case of) a subconvex bound in the archimedean aspect for the Dirichlet $L$-functions. If $\chi$ is a Dirichlet character of modulus $q=\Cond(\chi_{\fin}) \in \ag{N}$, Burgess \cite{Bu63} established his famous subconvex bound
	$$ \norm[L(1/2, \chi \norm_{\ag{A}}^{it})] = \norm[L(1/2+it,\chi)] \ll_{t,\epsilon} q^{\frac{1}{4}-\frac{1}{16}+\epsilon}. $$
	Later, Heath-Brown \citep{HB78,HB80} generalized Burgess' result to include the $t$-aspect as the following hybrid bound
	$$ \norm[L(1/2+it,\chi)] \ll_{\epsilon} (q(\norm[t]+2))^{\frac{1}{4}-\frac{1}{16}+\epsilon}. $$
	Ever since, the subconvexity problem has become a venerable problem in analytic number theory, in which both the optimal subconvex saving $\delta$ and the largest class of $L$-function mark the limit of techniques of analytic number theory. The saving $\delta = 1/12$ resp. $1/16$ seem to be two natural barriers in the literature. They are called \emph{Weyl-type} resp. \emph{Burgess-type} subconvex bound for this historic reason. Moreover, it was discovered that for $d > 1$ the subconvexity problem of $L(s,\pi)$ is intimately related with various equidistribution problems \cite{Du88, Sar01}. More such relations can be found in \cite[Lecture 5]{Mi07}, as well as an application of the subconvexity problem in the level aspect for $d=1$ and $\F$ imaginary quadratic.
	
	In this paper, we restrict to the case $d=1$, i.e., when $\pi=\chi$ is a Hecke character. In the case $\F=\ag{Q}$, many strong results are known besides the above bounds by Weyl, Burgess and Heath-Brown (for example \cite{HW00}), especially in some special cases. For example, in \cite{HW00} the case of $q$ prime and of hybrid type is considered; in \cite{Mil16} with very strong result of sub-Weyl type, the case of $q=p^n$ a prime power and for the $q$-aspect is treated. Another interesting special case is when we restrict to $\chi=\chi_q$ the quadratic character (and for $q$ \emph{special}, say \emph{square-free}). Bounds of better quality than Burgess' are known to hold for Weyl-type. For example, among many other good results Conrey and Iwaniec \cite[Corollary 1.5]{CI00} obtained
	$$ \norm[L(1/2+it,\chi_q)] \ll_{t,\epsilon} q^{\frac{1}{4}-\frac{1}{12}+\epsilon}, $$
	which was recently generalized by Young \cite[(1.5)]{Yo17} as
	$$ \norm[L(1/2+it,\chi_q)] \ll_{\epsilon} (q(\norm[t]+2))^{\frac{1}{4}-\frac{1}{12}+\epsilon}. $$
	The above bound was further generalized for \emph{cube-free} $q$ by Petrow and Young \cite{PY18}. Over a general number field, the best known result is the main theorem of Soehne \cite[p.227]{Soe97}, which follows the method of Heath-Brown \cite{HB78, HB80} (it attains the Weyl-type bound if the usual conductor $\mathfrak{f} = \mathfrak{f}_0^3$ is a \emph{cube}):
	$$ L(1/2+it, \chi) \ll_{\epsilon, \F} \left( \Cond_{\infty}(\chi \norm_{\ag{A}}^{it}) \Nr(\mathfrak{f}) \right)^{1/6+\epsilon} + \Nr(\mathfrak{f}_0)^{1/2+\epsilon} + \Nr(\mathfrak{f} / \mathfrak{f}_0)^{1/4+\epsilon}, $$
	where $\mathfrak{f}_0$ is any ideal dividing $\mathfrak{f}$, the usual conductor of $\chi$.
	
	 In the work of Michel \& Venkatesh \cite[Theorem 5.1 \& Section 5.1.7]{MV10}, a subconvex bound for Hecke characters $\chi$ was obtained with the subconvex exponent unspecified. We shall modify their approach and obtain a hybrid subconvex bound of Burgess-type for $L$-functions associated with Hecke characters over general number fields.
\begin{theorem}
	Let $\chi$ be a Hecke character of $\F$ with analytic conductor $\Cond(\chi)$. We have
	$$ \extnorm{L(\frac{1}{2},\chi)} \ll_{\F,\epsilon} \Cond(\chi)^{\frac{1}{4}-\frac{1-2\theta}{16}+\epsilon}, $$
	where $\theta$ is any constant towards the Ramanujan-Petersson conjecture.
\label{MainThm}
\end{theorem}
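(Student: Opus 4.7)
The plan is to realize $L(1/2,\chi)$ (up to the bounded factor $\zeta_\F(1/2)$) as the value of a $\GL_2$ Eisenstein $L$-function, and then to obtain subconvexity for the latter via an extension of the $\GL_2 \times \GL_1$ amplification machinery from the author's previous paper. Concretely, let $\pi = \pi(\chi, \mathbf{1})$ be the isobaric Eisenstein representation of $\GL_2(\A_\F)$ attached to the pair $(\chi, \mathbf{1})$, so that $L(s,\pi) = L(s,\chi)\zeta_\F(s)$ and $\Cond(\pi) \asymp \Cond(\chi)$. Theorem \ref{MainThm} then follows from a subconvex bound $L(1/2,\pi)\ll \Cond(\pi)^{1/4-(1-2\theta)/16+\epsilon}$.

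To bound $L(1/2,\pi)$, one picks a suitable pure tensor $\varphi \in \pi$ and expresses $|L(1/2,\pi)|^2$ as the leading contribution to a regularized period of $|\varphi|^2$ against a test function built from Eisenstein sections on $\PGL_2(\A_\F)$. Because $\pi$ is Eisenstein, $\varphi$ is not square-integrable and the ``inner product'' must be interpreted in Zagier's extended sense, as developed in the previous paper; this replaces the usual $L^2$-pairing that underlies the cuspidal method and is the technical heart of the generalization. One then amplifies by a short weighted sum of Hecke operators, designed to isolate $\pi$, applies spectral decomposition of the amplified kernel inside the regularized pairing, and thereby obtains a sum over automorphic representations $\sigma$ of $\PGL_2$ weighted by Hecke eigenvalue data. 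Bounding each summand via an $L^\infty$-type estimate on the test vector together with local bounds involving the Ramanujan-Petersson exponent $\theta$, and invoking large-sieve type inequalities in the spectral aspect, gives a nontrivial bound whose optimization over the amplifier length yields the Burgess-type exponent.

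The main obstacle will be the rigorous treatment of the continuous and residual contributions in the regularized spectral decomposition, since potential near-collisions between $\pi$ and the degenerate Eisenstein summands can produce polar pieces via Zagier's regularization that must either be canceled against one another or shown to be of strictly lower order than the cuspidal mass. A second delicate quantitative point is tracking the precise dependence on $\theta$ — which enters through Fourier-coefficient bounds for the cuspidal $\sigma$'s on $\GL_2$ and through local Whittaker estimates at archimedean and ramified finite places of $\F$ — so that the Ramanujan-dependent factor in the final saving is exactly $(1-2\theta)/16$ and not worse, thereby reproducing the classical Burgess exponent under Ramanujan.
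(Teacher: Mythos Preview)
Your proposal has the right overall architecture---regularized periods, amplification, spectral decomposition, and tracking of $\theta$---but the starting point differs from the paper in a way that matters for the method.

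The paper does \emph{not} work with $\pi = \pi(\chi,\mathbf{1})$. It takes the \emph{fixed} Eisenstein representation $\pi = \pi(1,1) = \pi_{1,1}$ (trivial characters), the spherical vector $\varphi_0 = \eis^*(0,f_0)$, and uses the identity
\[
L(s,\pi(1,1)\times\chi) = L(s,\chi)^2.
\]
The varying Hecke character $\chi$ enters only through the zeta-functional $\zeta(1/2,\chi,n(T).\varphi_0)$, exactly as in the cuspidal twist problem of \cite{Wu14}. The adaptation is then a direct parallel: one replaces the fixed cuspidal $\pi$ of \cite{Wu14} by the fixed Eisenstein $\pi(1,1)$, and the new technical ingredients are (i) the approximate functional equation for the zeta-functional on $\Aut^{\freg}$ (Proposition~\ref{ApproxFE}), which produces extra polar terms from the constant term $\varphi_{\grN}$; (ii) the subtraction of the regularizing Eisenstein piece $\Reis$ needed to make products $a(t)\varphi_0\cdot\overline{\varphi_0}$ square-integrable, giving an extra ``exceptional part'' $S_{\Reis}$ in the spectral expansion; and (iii) the regularized $\intL^2$-bound $\Norm[\varphi_t]\ll\log E$ of Lemma~\ref{RegL2Bd}, which replaces the elementary $\intL^4$-norm bound available for cusp forms. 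After these adjustments the bounds of \cite{Wu14} for the cuspidal and Eisenstein parts carry over verbatim, and the optimization $E=Q^{(1-2\theta)/8}$, $\kappa=1/4+\theta/6$ yields the exponent.

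Your choice $\pi=\pi(\chi,\mathbf{1})$ makes the test vector $\varphi$, its constant term, and the regularizing piece $\Reis$ all depend on $\chi$; quantities that in the paper are absorbed into constants depending only on $\F$ would now carry powers of $\Cond(\chi)$, and it is not clear the same final exponent survives without substantial extra work. Your phrase ``amplifier designed to isolate $\pi$'' also suggests spectral-family amplification, whereas the paper (following \cite{Wu14}) amplifies by Hecke-translating the \emph{fixed} vector $\varphi_0$ via $a(\varpi_v\varpi_{v'}^{-1})$ at places where $\chi$ is unramified; this is what produces the off-diagonal saving $E^{-2}$ against the $\theta$-dependent loss $EQ^{-1/4+\theta/2}$.
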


\noindent Combining with Soehne's bound, we deduce the following result.

\begin{corollary}
	Let $\chi$ be a Hecke character of $\F$ and $t \in \ag{R}$. Denote by
	$$ T := \Cond_{\infty}(\chi \cdot \norm_{\ag{A}}^{it}), \quad q := \Cond_{\fin}(\chi). $$
	Then we have for any $\epsilon > 0$
	$$ \extnorm{ L(\frac{1}{2}+it, \chi) } \ll_{\epsilon, \F} (Tq)^{\epsilon} \cdot \left\{ \begin{matrix} (Tq)^{1/6} & \text{if } T \geq q^{1/2} \\ q^{1/4} & \text{if } q^{(1-2\theta)/(3+2\theta)} \leq T < q^{1/2} \\ (Tq)^{(3+2\theta)/16} & \text{if } T \leq q^{(1-2\theta)/(3+2\theta)} \end{matrix} \right. , $$
	where $\theta$ is any constant towards the Ramanujan-Petersson conjecture.
\end{corollary}
\begin{proof}
	We apply Soehne's bound with $\mathfrak{f}_0 = \vo$ the ring of integers of $\F$, compare it with Theorem \ref{MainThm} and distinguish cases according to the relative size of $T$ and $q$.
\end{proof}

	\subsection{Discussion on Method}
	
	The proof is inspired by the method of our earlier work \cite{Wu14} where we established a Burgess-type subconvext bound for $\GL_1$ twists of a $\GL_2$ {\em cuspidal} representation $\pi$.
\begin{equation}
	\extnorm{L(1/2,\pi \times \chi)} \ll_{\pi,\F,\epsilon} \Cond(\chi)^{\frac{1}{2}-\frac{1-2\theta}{8}+\epsilon}.
\label{MainBd}
\end{equation}
	In this paper we show that it is possible to replace the cuspidal representation $\pi$ by the Eisenstein series representation $\pi(1,1)$ and obtain the same bound. Theorem \ref{MainThm} then follows from the identity
\begin{equation}
	L(s, \pi(1,1) \times \chi) = L(s,\chi)^2.
\label{GL2GL1LRel}
\end{equation}
	The main hurdle is to address the non square-integrability of Eisenstein series. For this we use a regularization process which we show does not harm the quality of the final out-come. By contrast, the original approach with truncation on Eisenstein series \cite[\S 5.1.7]{MV10} does destroy the Burgess-like quality (see \S \ref{FailTE} below for more details).
	
	It is worthwhile to give some comments on our method, which is quite different from the methods applied in the case $\F=\ag{Q}$ by Burgess or Conrey-Iwaniec. Burgess' method is based on the study of character sums of the shape
	$$ \sum_{m_1 \leq n \leq m_2} \chi(n), $$
which makes use of Weil's bound hence makes extensive use of the periodicity of the summand function $n \mapsto \chi(n)$. Its direct generalization
\begin{equation}
	\sum_{m_1 \leq \Nr(\idlA) \leq m_2} \chi(\idlA),
\label{BurgessChSNF}
\end{equation}
where $\idlA$ runs over integral ideals, loses the periodicity for the summand function. Our method can be viewed as a variant of Conrey-Iwaniec's method (see \cite[\S 1.1]{Wu3}). The main common feature is to bring a problem for $\GL_1$ into the setting for $\GL_2$, and to use the available knowledge on the spectral theory of automorphic representations for $\GL_2$. According to the comparison between \cite{BH10} and \cite{Wu14}, this method virtually consists of taking (\ref{BurgessChSNF}) into the fourth power and studying the cancellation of the resulted character sums by means of the ``spectral decomposition of the shifted convolution sums'' \cite{BH08_Sh} instead of Weil's bounds on character sums, hence it is also a variant of the original method of Burgess, ``disguised'' into the language of periods, which treats the archimedean aspect equally well.

	It would also be enlightening to point out the following explanation of the identity (\ref{GL2GL1LRel}) in terms of an identity of periods. Any function in the induced model of $\pi(1,1)$ can be constructed from a Schwartz function $\Phi \in \Sch(\ag{A}^2)$ as
	$$ f_{\Phi}(g) := \norm[\det g]_{\ag{A}}^{\frac{1}{2}+s} \int_{\ag{A}^{\times}} \Phi((0,t)g) \norm[t]_{\ag{A}}^{1+2s} d^{\times}t \mid_{s=0}, $$
whose Whittaker function is equal to
	$$ W_{\Phi}(a(y)) = \norm[y]_{\A}^{\frac{1}{2}} \int_{\ag{A}^{\times}} \Four[2]{\Phi}(t,\frac{y}{t}) d^{\times}t, $$
where $\Four[2]{\Phi}$ is the Fourier transform of $\Phi$ with respect to the second variable. Hence the period representing the left hand side of (\ref{GL2GL1LRel}) can be re-written, with the change of variables $y \mapsto yt$, as
	$$ \int_{\ag{A}^{\times}} W_{\Phi}(a(y)) \chi(y) \norm[y]_{\ag{A}}^{s-\frac{1}{2}} d^{\times}y = \int_{\ag{A}^{\times} \times \ag{A}^{\times}} \Four[2]{\Phi}(t,y) \chi(ty) \norm[t]_{\ag{A}}^s \norm[y]_{\ag{A}}^s d^{\times}t d^{\times}y. $$
	The right hand side of the above equation is exactly the integral representation of that of (\ref{GL2GL1LRel}) if $\Four[2]{\Phi}$ is decomposable as the tensor product of two functions in $\Sch(\ag{A})$. In other words, if we read the above discussion inversely, we see that our method makes use of the ``two dimensional Tate's integral'', which brings in the structure of $\GL_2$ not present in the usual integral representation \`a la Tate of $L(s,\chi)$.
\begin{remark}
	We also remark that the (global) factorizability $\chi(ty) = \chi(t) \chi(y)$ is responsable for such a link. For example, even though we have a similar identity of $L$-functions
	$$ L(s, \pi(1,1) \times \pi) = L(s,\pi)^2 $$
where $\pi$ is a cuspidal representation of $\GL_2$, an identity of the integral representations of the two sides does not seem to exist: the LHS is represented by the Rankin-Selberg integral for $\GL_2 \times \GL_2$; the RHS is represented by twice/square the integral representation for the standard $L$-function for $\GL_2$.
\end{remark}

	\subsection{Notations and Conventions}
	
	$\ag{N}$ is the set of natural numbers containing $0$. All characters including Hecke characters are unitary. Non unitary ones will be called quasi-characters.
	
	If $f$ is a meromorphic function around $s=s_0$, we introduce the coefficients into its Laurent expansion
	$$ f(s) = \sideset{}{_{-\infty < k < 0}} \sum \frac{f^{(k)}(s_0)}{(-k)!} (s-s_0)^k + \sideset{}{_{k \geq 0}} \sum \frac{f^{(k)}(s_0)}{k!} (s-s_0)^k. $$
	The terms for $k<0$ form the \emph{principal part} \cite[p.p. 211]{Ru86} of $f$ at $s_0$. We write
	$$ f^{\hol}(s) = f(s) - p(s), \quad p(s) := \sideset{}{_{-\infty < k < 0}} \sum \frac{f^{(k)}(s_0)}{(-k)!} (s-s_0)^k. $$
	In particular, if $f$ has a pole of order $k_0$ at $s_0$, we have
	$$ \frac{\partial^k f^{\hol}}{\partial s^k}(s_0) = \frac{k!}{(k+k_0)!} \left. \frac{\partial^{k+k_0}}{\partial s^{k+k_0}} \right|_{s=s_0} \left( (s-s_0)^{k_0} f(s) \right). $$
\begin{remark}
	The value $f^{\hol}(s_0)$ with notations as given above is intimately related with the \emph{finite part} functional, denoted by $\mathrm{f.p.}$ in \cite[Theorem (6.33)]{GJ79}.
\end{remark}
	
	In addition to the notations given above, we import \cite[Section 2.1]{Wu14}, in which most of the notations are in fact standard. (For example, our normalization of measures is just the Tamagawa measure with the standard convergence factors.) We simply address the following points/differences.

\noindent (1) The number field is written in bold character $\F$, with ring of algebraic integers $\vo$ and ring of adeles $\A$. $v$ denotes a place of $\F$. If $v < \infty$ is finite, we usually write $v=\vp$, which is identified with a prime ideal $\vp$ of $\vo$.

\noindent (2) We write the algebraic groups defined over $\F$ in bold characters such as $\grG, \gp{N}, \grB,\grZ$ etc, where $\grG = \GL_2$, $\grB$ is the upper triangular subgroup of $\grG$, $\gp{N} \vartriangleleft \gp{B}$ is the unipotent upper triangular subgroup, and $\grZ$ is the center of $\grG$.

\noindent (3) $\gp{K} = \sideset{}{_v} \prod \gp{K}_v$ is the standard maximal compact subgroup of $\GL_2(\A)$, i.e.
	$$ \gp{K}_v = \left\{ \begin{matrix} \SO_2(\R) & \text{if } \F_v = \R \\ \SU_2(\C) & \text{if } \F_v = \C \\ \GL_2(\vo_{\vp}) & \text{if } v=\vp < \infty \end{matrix} \right. . $$
	
\noindent (4) In $\GL_2$, for local or global variables $x \in \F_v$ or $\A$, $y \in \F_v^{\times}$ or $\A^{\times}$, we write
	$$ n(x) = \begin{pmatrix} 1 & x \\ & 1 \end{pmatrix}, \quad a(y) = \begin{pmatrix} y & \\ & 1 \end{pmatrix}. $$
	
\noindent (5) We use the abbreviation
	$$ [\GL_2] = \GL_2(\F) \gp{Z}(\ag{A}) \backslash \GL_2(\ag{A}) = [\PGL_2]. $$
	
\noindent (6) If $f_0 \in \pi(1,1)$ the global principal series representation induced from trivial characters, which defines a flat section $f_s \in \pi(\norm_{\A}^s, \norm_{\A}^{-s})$, we normalize the usual Eisenstein series $\eis(s,f_0) = \eis(f_s)$ by
	$$ \eis^*(s,f_0) := \Lambda_{\F}(1+2s) \eis(s,f_0). $$
	
\noindent (7) In the above equation, $\Lambda_{\F}(s)$ is the complete Dedekind zeta function of $\zeta_{\F}(s)$. More generally, $L(\cdot)$ denotes $L$-functions without factors at infinity. $\Lambda(\cdot)$ denotes the complete $L$-functions. We write $\zeta_{\F}^*$ for the residue of $\zeta_{\F}(s)$ at $s=1$. We also introduce
	$$ \lambda_{\F}(s) = \frac{\Lambda_{\F}(-2s)}{\Lambda_{\F}(2+2s)} = \frac{\lambda_{\F}^{(-1)}(0)}{s} + O(1). $$


	Additional notations will be given in the course of proofs.

\section{Miscellaneous Preliminaries}

	\subsection{Extension of Zagier's Regularized Integral}
	
	In this subsection we recall and summarize our extension of the theory of regularized integrals in \cite[\S 5 \& \S 6]{Wu9} without proofs. This extension fits well in the context of the \emph{Rankin-Selberg trace formula}. It could not be well understood in the framework of the subconvexity problem. Hence we encourage the interested reader to read \cite[\S 5 \& \S 6]{Wu9} for a better understanding.
	
	We begin with the recall on the following space of functions on the automorphic quotient of $\GL_2$ over a general number field $\F$ with the ring of adeles $\ag{A}$.
\begin{definition}
	(\cite[Definition 5.14]{Wu9}) Let $\omega$ be a unitary character of $\F^{\times} \backslash \ag{A}^{\times}$. Let $\varphi$ be a smooth function on $\GL_2(\F) \backslash \GL_2(\ag{A})$ with central character $\omega$. We call $\varphi$ \emph{finitely regularizable} if there exist unitary characters $\chi_i: \F^{\times} \backslash \ag{A}^{\times} \to \ag{C}^{(1)}$, $\alpha_i \in \ag{C}, n_i \in \ag{N}$ and smooth functions $f_i \in \Ind_{\gp{B}(\ag{A}) \cap \gp{K}}^{\gp{K}} (\chi_i, \omega \chi_i^{-1})$ for $1 \leq i \leq l$, such that
\begin{itemize}
	\item[(1)] for any $M \gg 1$
	$$ \varphi(n(x)a(y)k) = \varphi_{\gp{N}}^*(n(x)a(y)k) + O(\norm[y]_{\ag{A}}^{-M}), \text{ as } \norm[y]_{\ag{A}} \to \infty, $$
	\item[(2)] we can differentiate the above equality with respect to the universal enveloping algebra of the lie algebra of $\GL_2(\ag{A}_{\infty})$.
\end{itemize}
	Here we have written/defined the \emph{essential constant term}
	$$ \varphi_{\gp{N}}^*(n(x)a(y)k)=\varphi_{\gp{N}}^*(a(y)k) = \sideset{}{_{i=1}^l} \sum \chi_i(y) \norm[y]_{\ag{A}}^{\frac{1}{2}+\alpha_i} \log^{n_i} \norm[y]_{\ag{A}} \cdot f_i(k). $$
	In this case, we call $\Ex(\varphi)=\{ \chi_i \norm^{\frac{1}{2}+\alpha_i}: 1 \leq i \leq l \}$ the \emph{exponent set} of $\varphi$, and define
	$$ \Ex^+(\varphi) = \{ \chi_i \norm^{\frac{1}{2}+\alpha_i} \in \Ex(\varphi): \Re \alpha_i \geq 0 \}; \quad \Ex^-(\varphi) = \{ \chi_i \norm^{\frac{1}{2}+\alpha_i} \in \Ex(\varphi): \Re \alpha_i \leq 0 \}. $$
	The space of finitely regularizable functions with central character $\omega$ is denoted by $\Aut^{\freg}(\GL_2,\omega)$.
\label{FinRegFuncDef}
\end{definition}
\noindent Obviously $\Aut^{\freg}(\GL_2,\omega)$ is stable under the right regular translation of $\GL_2(\ag{A})$ and contains the Schwartz space with central character $\omega$, hence the space of smooth cusp forms. It also contains any finite product of Eisenstein series (\cite[Remark 5.19]{Wu9}). In the case $\omega = 1$ and for any $\varphi \in \Aut^{\freg}(\GL_2,1)$, the integral
	$$ R(s,\varphi) := \int_{\ag{A}^{\times} \times \gp{K}} (\varphi_{\gp{N}} - \varphi_{\gp{N}}^*)(a(y)\kappa) \norm[y]_{\ag{A}}^{s-1/2} d^{\times}y d\kappa $$
is convergent for any $\Re s >> 1$ and admits meromorphic continuation. We use it to define the regularized integral as
	$$ \Aut^{\freg}(\GL_2,1) \to \ag{C}, $$
	$$ \varphi \mapsto \int_{[\PGL_2]}^{\reg} \varphi(g) dg := \frac{1}{\Vol([\PGL_2])} \left( \Res_{s=1/2} R(s,\varphi) + \sideset{}{_{\substack{ \alpha_i = -1 \\ n_i = 0 \\ \chi_1(\ag{A}^{(1)}) = 1 }}} \sum \int_{\gp{K}} f_i(\kappa) d\kappa \right). $$

	If $f \in \Ind_{\gp{B}(\ag{A})}^{\GL_2(\ag{A})}(\chi_1,\chi_2)$ such that $\chi_1 \chi_2^{-1} = \norm_{\ag{A}}^{i\mu}$ for some $\mu \in \ag{R}$, we introduce the \emph{regularizing Eisenstein series} as (\cite[Definition 5.16]{Wu9})
\begin{equation}
	\eis^{\reg}(s, f)(g) = \eis(s, f)(g) - \frac{\Lambda_{\F}(1-2s-i\mu_j)}{\Lambda_{\F}(1+2s+i\mu_j)} \int_{\gp{K}} f(\kappa) d\kappa \cdot \chi_1^{-1}(\det g) \norm[\det g]_{\ag{A}}^{\frac{i\mu_j}{2}}.
\label{RegEisDef}
\end{equation}
\noindent For any $\varphi \in \Aut^{\freg}(\GL_2,\omega)$ with auxiliary data given in Definition \ref{FinRegFuncDef} we define (\cite[(5.3)]{Wu9})
\begin{equation}
	\Reis(\varphi) = \sideset{}{_{\substack{\Re \alpha_j > 0 \\ \alpha_j \neq \frac{1}{2}+i\mu_j}}} \sum \frac{\partial^{n_j}}{\partial s^{n_j}}\eis(\alpha_j, f_j) + \sideset{}{_{\substack{\Re \alpha_j > 0 \\ \alpha_j = \frac{1}{2}+i\mu_j}}} \sum \frac{\partial^{n_j}}{\partial s^{n_j}}\eis^{\reg}(\alpha_j, f_j),
\label{ReisDef}
\end{equation}
\noindent where $\mu_j \in \ag{R}$ is defined only if $\omega^{-1}\chi_j^2(y)=\norm[y]_{\ag{A}}^{-2i\mu_j}$. This defines a linear map
	$$ \Aut^{\freg}(\GL_2,\omega) \to \Aut^{\freg}(\GL_2,\omega), \quad \varphi \mapsto \Reis(\varphi), $$
such that $\varphi - \Reis(\varphi) \in \intL^1(\GL_2,\omega)$, which is $\GL_2(\ag{A})$-intertwining when $\Ex(\varphi)$ does not contain $\norm_{\ag{A}}$. We denote the image by $\Reis(\GL_2, \omega)$. Moreover if $\Ex^+(\varphi) \cap \Ex^-(\varphi) = \emptyset$ then $\Reis(\varphi)$ is the unique element in $\Reis(\GL_2, \omega)$ such that $\varphi - \Reis(\varphi) \in \intL^2(\GL_2,\omega)$ (\cite[Proposition 5.25]{Wu9}), and we call it the $\intL^2$-residue of $\varphi$ (\cite[Definition 5.26]{Wu9}). In the case $\omega = 1$, $\Aut^{\freg}(\GL_2,1)$ is in the range of applicability of the regularized integral and (\citep[Proposition 5.27]{Wu9})
	$$ \int_{[\PGL_2]}^{\reg} \varphi(g) dg = \int_{[\PGL_2]} (\varphi(g) - \Reis(\varphi)(g)) dg. $$
In particular the above equation proves the $\GL_2(\ag{A})$-invariance of the regularized integral as a functional on $\Aut^{\freg}(\GL_2,1)$, when $\Ex(\varphi)$ does not contain $\norm_{\ag{A}}$. In this case the above equality was originally due to Zagier \cite{Za82}. We carefully generalized in \cite[Theorem 5.12 \& Definition 5.13]{Wu9} this theory into the adelic setting and proved the above equality without constraint on $\Ex(\varphi)$.

	In view of the inclusion (\cite[Remark 5.19]{Wu9})
	$$ \Aut^{\freg}(\GL_2,\omega_1) \cdot \Aut^{\freg}(\GL_2,\omega_2) \subset \Aut^{\freg}(\GL_2,\omega_1 \omega_2), $$
we can consider the following bilinear form. Let $\pi_j, j=1,2$ be two principal series representations with central character $\omega_j$ satisfying $\omega_1 \omega_2 = 1$. Let $V_j$ be the vector space of $\pi_j$ realized in the induced model from $\grB(\ag{A})$ with subspace of smooth vectors $V_j^{\infty}$. We then get a $\GL_2(\ag{A})$-invariant bilinear form
	$$ V_1^{\infty} \times V_2^{\infty} \to \ag{C}, \quad (f_1,f_2) \mapsto \int_{[\PGL_2]}^{\reg} \eis(f_1)(g) \eis(f_2)(g) dg, $$
where $\eis(f_j)$ should be suitably regularized if $\pi_j$ is at a position which creates a pole/zero for the relevant Eisenstein series. We succeeded in \cite[Theorem 6.5]{Wu9} to identify this bilinear form in the induced model. In order to present the result, we need to introduce some extra notations. Precisely, if we identify for any $s \in \ag{C}$ the space of functions $\pi_s$ with $H$, where
	$$ \pi_s := \Ind_{\gp{B}(\ag{A})}^{\GL_2(\ag{A})} (\norm_{\ag{A}}^s, \norm_{\ag{A}}^{-s}), \quad H := \Ind_{\gp{B}(\ag{A}) \cap \gp{K}}^{\gp{K}} 1, $$
then we can regard the intertwining operator $\Intw_s : \pi_s \to \pi_{-s}$ as a map from $H$ to itself. Using the \emph{flat section} map $H \to \pi_s, f \mapsto f_s$, we mean
	$$ (\Intw_s f_s)(a(y)\kappa) =: \norm[y]_{\ag{A}}^{\frac{1}{2}-s} (\Intw_s f)(\kappa), \quad \text{i.e.,} \quad \Intw_s f_s = \left( \Intw_s f \right)_{-s}. $$
	Let $e_0 \in H$ be the constant function taking value $1$. Define
	$$ \ProjP_{\gp{K}}: H \to \ag{C}, \quad f \mapsto \int_{\gp{K}} f(\kappa) d\kappa, $$
where $d\kappa$ is the probability Haar measure on $\gp{K}$. We obtain a map from $H$ to itself
	$$ \widetilde{\Intw}_s := \Intw_s \circ (I - \ProjP_{\gp{K}}e_0), $$
where $I$ is the identity map. Since $\Intw_s$ is ``diagonalizable'', we obtain the Taylor expansion as operators
	$$ \Intw_sf = \sum_{n=0}^{\infty} \frac{s^n}{n!} \Intw_0^{(n)}f, \quad \text{resp.} \quad \widetilde{\Intw}_{1/2+s} f = \sum_{n=0}^{\infty} \frac{s^n}{n!} \widetilde{\Intw}_{1/2}^{(n)}f. $$
\begin{theorem}{(\cite[Theorem 6.5]{Wu9})}
	The regularized integral of the product of two unitary Eisenstein series is computed as:
\begin{itemize}
	\item[(1)] If $\pi_1 = \pi(\xi_1,\xi_2), \pi_2 = \pi(\xi_1^{-1}, \xi_2^{-1})$ resp. $\pi_2 = \pi(\xi_2^{-1}, \xi_1^{-1})$ and $\xi_1 \neq \xi_2$, then
	$$ \int_{[\PGL_2]}^{\reg} \eis(0,f_1) \eis(0,f_2) = \frac{2\lambda_{\F}^{(0)}(0)}{\lambda_{\F}^{(-1)}(0)} \ProjP_{\gp{K}}(f_1f_2) - \ProjP_{\gp{K}}(\Intw_0^{(1)}f_1 \cdot \Intw_0 f_2), \quad \text{resp.} $$
	$$ \frac{\lambda_{\F}^{(0)}(0)}{\lambda_{\F}^{(-1)}(0)} (\ProjP_{\gp{K}}(f_1 \Intw_0 f_2) + \ProjP_{\gp{K}}(f_2 \Intw_0 f_1)) - \ProjP_{\gp{K}}(\Intw_0^{(1)}f_1 \cdot f_2). $$
	\item[(2)] If $\pi_1 = \pi(\xi,\xi), \pi_2 = \pi(\xi^{-1},\xi^{-1})$, then
\begin{align*}
	&\quad \int_{[\PGL_2]}^{\reg} \eis^{(1)}(0,f_1) \eis^{(1)}(0,f_2) = \frac{4\lambda_{\F}^{(2)}(0)}{\lambda_{\F}^{-1}(0)} \ProjP_{\gp{K}}(f_1f_2) + \frac{4\lambda_{\F}^{(2)}(0)}{\lambda_{\F}^{-1}(0)} \ProjP_{\gp{K}}(f_1 \cdot \Intw_0^{(1)} f_2) \\
	&\quad + \frac{\lambda_{\F}^{(0)}(0)}{\lambda_{\F}^{-1}(0)} \ProjP_{\gp{K}}(\Intw_0^{(1)}f_1 \cdot \Intw_0^{(1)}f_2) - \frac{1}{3} \ProjP_{\gp{K}}(\Intw_0^{(3)}f_1 \cdot f_2) - \ProjP_{\gp{K}}(\Intw_0^{(2)}f_1 \cdot \Intw_0^{(1)}f_2).
\end{align*}
\end{itemize}
	Here we have written (\cite[(5.2)]{Wu9})
	$$ \lambda_{\F}(s) := \frac{\Lambda_{\F}(-2s)}{\Lambda_{\F}(2+2s)} = \frac{\lambda_{\F}^{(-1)}(0)}{s} + \sum_{n=0}^{\infty} \frac{s^n}{n!} \lambda_{\F}^{(n)}(0). $$
\label{RIPEisUnitary}
\end{theorem}

	\subsection{Regularized Triple Product Formula}
	\label{RTPF}
	
	Let's first complete the analysis for products of two Eisenstein series. We recall a lemma.
\begin{lemma}{(\cite[Lemma 6.4]{Wu9})}
	Let $f, f_1, f_2 \in \Res_{\gp{K}}^{\GL_2(\ag{A})} \pi(1,1)$. For $0 \neq s \in \ag{C}$ small, we have for any $n, n_1, n_2 \in \ag{N}$
	$$ \int_{[\PGL_2]}^{\reg} \eis^{\reg, (n)}(\frac{1}{2}+s, f) = -\frac{\lambda_{\F}^{(n)}(s)}{\lambda_{\F}^{(-1)}(0)} \ProjP_{\gp{K}}(f); $$
	$$ \int_{[\PGL_2]}^{\reg} \eis^{\reg, (n_1)}(\frac{1}{2}+s, f_1) \eis^{\reg, (n_2)}(\frac{1}{2}, f_2) = 0. $$
\label{SimpleProdSing}
\end{lemma}
\noindent We also recall the technique of \emph{deformation} (\cite[(6.1)]{Wu9}), inspired by the work of Michel \& Venkatesh. In general, if $\varphi \in \Aut^{\freg}(\PGL_2), \Reis \in \Reis(\PGL_2)$ are given, so that $\varphi - \Reis \in \intL^1([\PGL_2])$, and if we can find continuous families $\varphi_s \in \Aut^{\freg}(\PGL_2), \Reis_s \in \Reis(\PGL_2)$ which coincide with $\varphi, \Reis$ at $s=0$, then we have
\begin{equation}
	\int_{[\PGL_2]}^{\reg} \varphi = \int_{[\PGL_2]} \varphi - \Reis = \lim_{s \to 0} \int_{[\PGL_2]} \varphi_s - \Reis_s = \lim_{s \to 0} \left( \int_{[\PGL_2]}^{\reg} \varphi_s - \int_{[\PGL_2]}^{\reg} \Reis_s \right).
\label{DeformTec}
\end{equation}

	We turn to the study of regularized integrals of the form
	$$ \int_{[\PGL_2]}^{\reg} \eis(\frac{1}{2},f_1) \eis(\frac{1}{2},f_2). $$
	Denote $e=e_1$. We can write
	$$ \eisCst^{\reg}(s,f) = f_s + (\widetilde{\Intw}_s f)_{-s} + \lambda_{\F}(s-\frac{1}{2}) \ProjP_{\gp{K}}(f) \left( e_{-s} - e_{-1/2} \right); $$
\begin{align*}
	\eisCst^{\reg,(n)}(\frac{1}{2},f) &= f_{1/2}^{(n)} + \sum_{k=0}^n \binom{n}{k} (-1)^k (\widetilde{\Intw}_{1/2}^{(n-k)} f)_{-1/2}^{(k)} \\
	&+ \ProjP_{\gp{K}}(f) \cdot \left\{ \frac{(-1)^{n+1} \lambda_{\F}^{-1)}(0)}{n+1} e_{-1/2}^{(n+1)} + \sum_{k=1}^n \binom{n}{k} (-1)^k \lambda_{\F}^{(n-k)}(0) e_{-1/2}^k \right\},
\end{align*}
from which one easily deduce $\eisCst^{\reg}(1/2+s,f_1) \eisCst^{\reg,(n_2)}(1/2,f_2)$. We tentatively define
\begin{align*}
	\Reis^{\reg}(s) &:= \eis^{(n_2)}(\frac{3}{2}+s, f_1f_2) + \sum_{k=0}^{n_2} \binom{n_2}{k} (-1)^k \eis^{\reg, (k)}(\frac{1}{2}+s, f_1 \widetilde{\Intw}_{1/2}^{(n_2-k)} f_2) \\
	&\quad + \ProjP_{\gp{K}}(f_2) \cdot \left\{ \frac{(-1)^{n_2+1} \lambda_{\F}^{(-1)}(0)}{n_2+1} \eis^{\reg,(n_2+1)}(\frac{1}{2}+s, f_1) \right. \\
	&\quad \left. + \sum_{k=1}^{n_2} \binom{n_2}{k} (-1)^k \lambda_{\F}^{(n_2-k)}(0) \eis^{\reg,(k)}(\frac{1}{2}+s, f_1) \right\} \\
	&\quad + \eis^{\reg,(n_2)}(\frac{1}{2}-s, f_2 \widetilde{\Intw}_{1/2+s}f_1) \\
	&\quad + \lambda_{\F}(s) \ProjP_{\gp{K}}(f_1) \cdot \left\{ \eis^{\reg,(n_2)}(\frac{1}{2}-s, f_2) - \eis^{\reg,(n_2)}(\frac{1}{2}, f_2) \right\}.
\end{align*}
	Applying Lemma \ref{SimpleProdSing} with $n_1=0$ together with (\ref{DeformTec}), we get
\begin{align*}
	&\quad \int_{[\PGL_2]}^{\reg} \eis^{\reg}(\frac{1}{2},f_1) \eis^{\reg,(n_2)}(\frac{1}{2},f_2) = \lim_{s \to 0} \int_{[\PGL_2]} \eis^{\reg}(\frac{1}{2}+s,f_1) \eis^{\reg,(n_2)}(\frac{1}{2},f_2) - \Reis^{\reg}(s) \\
	&= \lim_{s \to 0} \sum_{k=0}^{n_2} \binom{n_2}{k} \frac{(-1)^k}{\lambda_{\F}^{(-1)}(0)} \lambda_{\F}^{(k)}(s) \ProjP_{\gp{K}}(f_1 \widetilde{\Intw}_{1/2}^{(n_2-k)} f_2) + \frac{\lambda_{\F}^{(n_2)}(-s)}{\lambda_{\F}^{(-1)}(0)} \ProjP_{\gp{K}}(f_2 \widetilde{\Intw}_{1/2+s}f_1) \\
	&\quad + \ProjP_{\gp{K}}(f_1) \ProjP_{\gp{K}}(f_2) \cdot \left\{ \frac{(-1)^{n_2+1} \lambda_{\F}^{(n_2+1)}(s)}{n_2+1} + \sum_{k=1}^{n_2} \binom{n_2}{k} (-1)^k \frac{\lambda_{\F}^{(n_2-k)}(0) \lambda_{\F}^{(k)}(s)}{\lambda_{\F}^{(-1)}(0)} + \lambda_{\F}(s) \lambda_{\F}^{(n_2)}(-s) \right\}.
\end{align*}
	Taking Laurent expansions, we verify that the function in $s$ in the range of the above limit is regular at $s=0$, unlike its appearance. The symmetry
	$$ \ProjP_{\gp{K}}(f_1 \widetilde{\Intw}_{1/2}^{(k)}f_2) = \ProjP_{\gp{K}}(f_2 \widetilde{\Intw}_{1/2}^{(k)}f_1), \forall k \in \ag{N} $$
must be used. Moreover, it can be differentiated $n_1$ times to deduce (3) of the following:

\begin{theorem}
\begin{itemize}
	\item[(1)] If $\pi_1 \not\simeq \widetilde{\pi}_2$ and $\xi_1 = \xi_1', \xi_2 \neq \xi_2'$ resp. $\xi_1 \neq \xi_1', \xi_2 = \xi_2'$ resp. $\xi_1=\xi_1', \xi_2 = \xi_2'$, $\xi_1\xi_2 \neq 1$ and $\xi_1^2\xi_2^2 = 1$, then for any $n_1,n_2 \in \ag{N}$
	$$ \int_{[\PGL_2]}^{\reg} \eis^{\reg,(n_1)}(\frac{1}{2},f_1) \cdot \eis^{(n_2)}(\frac{1}{2},f_2) = 0 \quad \text{resp.} \quad \int_{[\PGL_2]}^{\reg} \eis^{(n_1)}(\frac{1}{2},f_1) \cdot \eis^{\reg,(n_2)}(\frac{1}{2},f_2) = 0 $$
	$$ \text{resp.} \quad  \int_{[\PGL_2]}^{\reg} \eis^{\reg,(n_1)}(\frac{1}{2},f_1) \cdot \eis^{\reg,(n_2)}(\frac{1}{2},f_2) = 0. $$
	\item[(2)] If $\pi_1 = \pi(\xi_1,\xi_2), \pi_2 = \pi(\xi_1^{-1}, \xi_2^{-1})$ resp. $\pi_2 = \pi(\xi_2^{-1}, \xi_1^{-1})$ with $\xi_1 \neq \xi_2$, then for any $n_1,n_2 \in \ag{N}$
	$$ \int_{[\PGL_2]}^{\reg} \eis^{(n_1)}(\frac{1}{2},f_1) \cdot \eis^{(n_2)}(\frac{1}{2},f_2) = 0, \quad \text{resp.} $$
is a linear combination with coefficients depending only on $n_1,n_2$ and $\lambda_{\F}(s)$ of
	$$ \ProjP_{\gp{K}}(\Intw_{1/2}^{(n_1+n_2+1)}f_1 \cdot f_2); \quad \ProjP_{\gp{K}}(\Intw_{1/2}^{(l)}f_1 \cdot f_2) = \ProjP_{\gp{K}}(f_1 \cdot \Intw_{1/2}^{(l)} f_2), 0 \leq l \leq \max(n_1,n_2). $$
	\item[(3)] If $\pi_1 = \pi(\xi,\xi), \pi_2 = \pi(\xi^{-1},\xi^{-1})$, then for any $n_1,n_2 \in \ag{N}$
	$$ \int_{[\PGL_2]}^{\reg} \eis^{\reg,(n_1)}(\frac{1}{2},f_1) \cdot \eis^{\reg,(n_2)}(\frac{1}{2},f_2) $$
is a linear combination with coefficients depending only on $n_1,n_2$ and $\lambda_{\F}(s)$ of
	$$ \ProjP_{\gp{K}}(\widetilde{\Intw}_{1/2}^{(l)}f_1 \cdot f_2) = \ProjP_{\gp{K}}(f_1 \cdot \widetilde{\Intw}_{1/2}^{(l)} f_2), 0 \leq l \leq \max(n_1,n_2); $$
	$$ \ProjP_{\gp{K}}(\widetilde{\Intw}_{1/2}^{(n_1+n_2+1)}f_1 \cdot f_2); \quad \ProjP_{\gp{K}}(f_1) \ProjP_{\gp{K}}(f_2). $$
\end{itemize}
\label{RIPEisSing}
\end{theorem}

	Next, we give some complement of the main theorem of regularized integral \cite[Theorem 5.12]{Wu9}. Let $\xi_1,\xi_2, \omega$ be Hecke characters with $\xi_1 \xi_2 \omega =1$. Let $f \in \pi(\xi_1,\xi_2)$ and $\varphi \in \Cont^{\infty}(\GL_2, \omega)$, i.e., a smooth function on $\GL_2(\F) \backslash \GL_2(\ag{A})$ with central character $\omega$. Suppose $\varphi$ is \emph{finitely regularizable} defined in Definition \ref{FinRegFuncDef}.
\begin{proposition}
	For $\Re s \gg 1$ sufficiently large,
	$$ R(s,\varphi; f) := \int_{\F^{\times} \backslash \ag{A}^{\times}} \int_{\gp{K}} (\varphi_{\gp{N}} - \varphi_{\gp{N}}^*)(a(y)\kappa) f(\kappa) \xi_1(y) \norm[y]_{\ag{A}}^{s-\frac{1}{2}} d\kappa d^{\times}y $$
is absolutely convergent. It has a meromorphic continuation to $s \in \ag{C}$. If in addition
	$$ \Theta := \max_j \{ \Re \alpha_j \} < 0, $$
then we have, with the right hand side absolutely converging
	$$ R(s,\varphi; f) = \int_{[\PGL_2]} \varphi \cdot \eis(s,f), \quad \Theta < \Re s < -\Theta.  $$
	In the above region, the possible poles of $R(s, \varphi; f)$ are
\begin{itemize}
	\item $1/2 + i\mu(\xi_1\xi_2^{-1})$ if $\xi_1 \xi_2^{-1}$ is trivial on $\ag{A}^{(1)}$;
	\item $(\rho - 1)/2$ where $\rho$ runs over the non-trivial zeros of $L(s,\xi_1\xi_2^{-1})$.
\end{itemize}
	In particular $R(s,\varphi; f)$ is holomorphic for $0 \leq \Re s < \min(-\Theta,1/2)$.
\label{VRI}
\end{proposition}
\begin{proof}
	The proof is quite similar to that of \cite[Theorem 5.12 (3)]{Wu9}, except that $\Intw f_s$ is no longer explicitly computable. In fact, we have for $T > 1, \Re s \gg 1$, using the standard Rankin-Selberg unfolding
\begin{align*}
	\int_{[\PGL_2]} \varphi \cdot \Lambda^T \eis(s,f) &= R(s,\varphi; f) \\
	&- \int_{\F^{\times} \backslash \ag{A}^{\times}} \left( \int_{\gp{K}} (\varphi_{\gp{N}} - \varphi_{\gp{N}}^*)(a(y)\kappa) f(\kappa) d\kappa \right) \xi_1(y) \norm[y]_{\ag{A}}^{s-1/2} 1_{\norm[y]_{\ag{A}} > T} d^{\times} y \\
	&- \int_{\F^{\times} \backslash \ag{A}^{\times}} \left( \int_{\gp{K}} (\varphi_{\gp{N}} - \varphi_{\gp{N}}^*)(a(y)\kappa) \Intw f_s(\kappa) d\kappa \right) \xi_2(y) \norm[y]_{\ag{A}}^{-s-1/2} 1_{\norm[y]_{\ag{A}} > T} d^{\times} y \\
	&+ \Vol(\F^{\times} \backslash \ag{A}^{(1)}) \left( \sum_{j=1}^l \int_{\gp{K}} f_j(\kappa) f(\kappa) d\kappa \cdot 1_{\chi_j \xi_1(\ag{A}^{(1)}) = 1} \cdot \frac{1}{n_j !} \frac{\partial^{n_j}}{\partial s^{n_j}} \left( \frac{T^{s+\alpha_j + i\mu_j}}{s+\alpha_j + i\mu_j} \right) \right. \\
	&- \left. \sum_{j=1}^l \int_{\gp{K}} f_j(\kappa) \Intw f_s(\kappa) d\kappa \cdot 1_{\chi_j \xi_2(\ag{A}^{(1)}) = 1} \cdot \frac{(-1)^{n_j}}{n_j !} \frac{\partial^{n_j}}{\partial s^{n_j}} \left( \frac{T^{-s+\alpha_j + i\mu_j'}}{-s+\alpha_j + i\mu_j'} \right) \right),
\end{align*}
where $\mu_j$ resp. $\mu_j'$ is such that
	$$ \chi_j \xi_1(y) = \norm[y]_{\ag{A}}^{i\mu_j}, \quad \text{resp.} \quad \chi_j \xi_2(y) = \norm[y]_{\ag{A}}^{i\mu_j'}. $$
	We conclude by first shifting $s$ to the desired region, then letting $T \to \infty$. The possible poles are encoded in the possible poles of $\Intw f_s$, which are included in those of $L(1+2s, \xi_1\xi_2^{-1})^{-1}$ in the above region (c.f. for example \cite[Corollary 3.7, 3.10 \& Lemma 3.18]{Wu5}).
\end{proof}
\begin{proposition}
	Let notations be as in the previous proposition with $\Theta \leq -1/2$. Recall
	$$ \varphi_{\gp{N}}^*(n(x)a(y)k)=\varphi_{\gp{N}}^*(a(y)k)=\sum_{j=1}^l \chi_j(y) \norm[y]_{\ag{A}}^{\frac{1}{2}+\alpha_j} \log^{n_i} \norm[y]_{\ag{A}} f_j(k). $$
\begin{itemize}
	\item[(1)] If $\xi_1 \neq \xi_2$, then
	$$ \left( \frac{\partial^n R}{\partial s^n} \right)^{\hol}(\frac{1}{2},\varphi;f) = \int_{[\PGL_2]}^{\reg} \varphi \cdot \eis^{(n)}(\frac{1}{2},f) - \sideset{}{_j'}\sum \frac{\lambda_{\F}^{(n+n_j)}(0)}{\lambda_{\F}^{(-1)}(0)} \ProjP_{\gp{K}}(f_j f), $$
	where the summation is over $j$ such that $\xi_1 \chi_j (\ag{A}^{(1)}) =1, \alpha_j + i \mu(\xi_1\chi_j) = -1/2$.
	\item[(2)] If $\xi_1 = \xi_2 = \xi$, then
\begin{align*}
	\left( \frac{\partial^n R}{\partial s^n} \right)^{\hol}(\frac{1}{2},\varphi;f) &= \int_{[\PGL_2]}^{\reg} \varphi \cdot \eis^{\reg,(n)}(\frac{1}{2},f) - \sideset{}{_j'}\sum \frac{\lambda_{\F}^{(n+n_j)}(0)}{\lambda_{\F}^{(-1)}(0)}  \ProjP_{\gp{K}}(f_j f) \\
	&+ \lambda_{\F}^{(n)}(0) \cdot \ProjP_{\gp{K}}(f \cdot (\xi^{-1} \circ \det)) \cdot \int_{[\PGL_2]} \varphi \cdot (\xi \circ \det),
\end{align*}
	where the summation over $j$ is as in the previous case.
\end{itemize}
\label{TripleToDouble}
\end{proposition}
\begin{proof}
	The case (1) being simpler, we only give details for (2). By twisting, we may assume $\xi=1$. Let $s$ be small with $\Re s < 0$. The $\intL^2$-residue of $\varphi \cdot \eis(1/2+s,f)$ is given by
	$$ \Reis(s) := \sideset{}{_j} \sum \eis^{(n_j)}(s+1+\alpha_j, f_j f), $$
	where the summation is over $j$ such that $\Re \alpha_j > -1$. Define
	$$ \Reis^{\reg}(s) := \sideset{}{_j'} \sum \eis^{\reg,(n_j)}(s+1+\alpha_j, f_j f) + \sideset{}{_j^*} \sum \eis^{(n_j)}(s+1+\alpha_j, f_j f) $$
where $\sideset{}{_j'} \sum$ is the summation as in the statement and $\sideset{}{_j^*} \sum$ is the rest. By the previous proposition, we have
\begin{align*}
	R(\frac{1}{2}+s,\varphi;f) &= \int_{[\PGL_2]} \varphi \cdot \eis(\frac{1}{2}+s,f) = \int_{[\PGL_2]} \varphi \cdot \eis^{\reg}(\frac{1}{2}+s,f) + \lambda_{\F}(s) \ProjP_{\gp{K}}(f) \cdot \int_{[\PGL_2]} \varphi \\
	&= \int_{[\PGL_2]} \left( \varphi \cdot \eis^{\reg}(\frac{1}{2}+s,f) - \Reis^{\reg}(s) \right) - \int_{[\PGL_2]} \left( \Reis(s) - \Reis^{\reg}(s) \right) \\
	&\quad + \lambda_{\F}(s) \ProjP_{\gp{K}}(f) \cdot \int_{[\PGL_2]} \varphi.
\end{align*}
	Since $\Reis^{\reg,(n)}(s)$ is the $\intL^2$-residue of $\varphi \cdot \eis^{\reg,(n)}(\frac{1}{2}+s,f)$, we can compare the finite parts of both sides and conclude by
	$$ \Reis(s) - \Reis^{\reg}(s) = \sideset{}{_j'} \sum \lambda_{\F}^{(n_j)}(s) \ProjP_{\gp{K}}(f_jf). $$
\end{proof}

	Finally, we state and prove a special case of regularized triple product formulas. The method used in the proof is applicable in any general case but on the one treated in the following theorem is used in the current paper.
\begin{theorem}
	Let $f_j \in \pi(1,1), j=1,2,3$. Then for any $n \in \ag{N}$
	$$ \int_{[\PGL_2]}^{\reg} \eis^*(0,f_1) \cdot \eis^*(0,f_2) \cdot \eis^{\reg,(n)}(\frac{1}{2},f_3) $$
is the sum of
	$$ \left( \frac{\partial^n R}{\partial s^n} \right)^{\hol} \left( \frac{1}{2}, \eis^*(0,f_1) \cdot \eis^*(0,f_2); f_3 \right) $$
and a weighted sum with coefficients depending only on $\lambda_{\F}(s)$ of
	$$ \ProjP_{\gp{K}}(\Intw_0^{(l)}f_1 \cdot f_2) \ProjP_{\gp{K}}(f_3), \quad 0 \leq l \leq 3; $$
	$$ \ProjP_{\gp{K}}(f_1\cdot f_2 \cdot \widetilde{\Intw}_{1/2}^{(l)} f_3), \quad 0 \leq l \leq \max(2,n) \quad \& \quad l = n+3; $$
	$$ \ProjP_{\gp{K}}((f_1 \Intw_0 f_2 + f_2 \Intw_0 f_1) \cdot \widetilde{\Intw}_{1/2}^{(l)} f_3), \quad 0 \leq l \leq \max(1,n) \quad \& \quad l = n+2; $$
	$$ \ProjP_{\gp{K}}(\Intw_0 f_1\cdot \Intw_0 f_2 \cdot \widetilde{\Intw}_{1/2}^{(l)} f_3), \quad 0 \leq l \leq n \quad \& \quad l = n+1. $$
\label{RegTripEis}
\end{theorem}
\begin{proof}
	We shall only point out how the computation is effectuated, since the precise formulas are quite long, useless for the purpose of the current paper and would only obscure the idea.
\begin{align*}
	\Reis(f_1,f_2) &:= (\Lambda_{\F}^*)^2 \cdot \left\{ \eis^{\reg,(2)}(\frac{1}{2},f_1f_2) + \frac{1}{2} \eis^{\reg,(1)}(\frac{1}{2}, f_1 \cdot \Intw_0^{(1)}f_2 + \Intw_0^{(1)}f_1 \cdot f_2) \right. \\
	&\quad\left. + \frac{1}{4} \eis^{\reg}(\frac{1}{2}, \Intw_0^{(1)}f_1 \cdot \Intw_0^{(1)}f_2) \right\}
\end{align*}
is the $\intL^2$-residue of $\eis^*(0,f_1) \cdot \eis^*(0,f_2)$. Let $\varphi := \eis^*(0,f_1) \cdot \eis^*(0,f_2) - \Reis(f_1,f_2)$, then we need to compute
	$$ \int_{[\PGL_2]}^{\reg} \varphi \cdot \eis^{\reg,(n)}(\frac{1}{2},f_3) + \int_{[\PGL_2]}^{\reg} \Reis(f_1,f_2) \cdot \eis^{\reg,(n)}(\frac{1}{2},f_3). $$
	The first term is computed by Proposition \ref{TripleToDouble} (2), involving
	$$ \int_{[\PGL_2]} \varphi = \int_{[\PGL_2]}^{\reg} \eis^*(0,f_1) \cdot \eis^*(0,f_2) = \frac{(\Lambda_{\F}^*)^2}{4} \int_{[\PGL_2]}^{\reg} \eis^{(1)}(0,f_1) \cdot \eis^{(1)}(0,f_2), $$
which is treated in Theorem \ref{RIPEisUnitary} (2). The second term is treated in Theorem \ref{RIPEisSing} (3).
\end{proof}

	\subsection{Extension of Global Zeta Integral}
	
	Fixing a central Hecke character $\omega$ over a number field $\F$, we extend the global part of the Hecke-Jacquet-Langlands' theory to $\Aut^{\freg}(\GL_2,\omega)$ Definition \ref{FinRegFuncDef}, as well as an analogue of ``approximate functional equation''. Note that Eisenstein series are in $\Aut^{\freg}(\GL_2,\omega)$.
\begin{definition}
	Let $\varphi \in \Aut^{\freg}(\GL_2,\omega)$ for some Hecke character $\omega$. For a Hecke character $\chi$ and $s \in \C, \Re s \gg 1$, we define the zeta-functional by
	$$ \zeta(s,\chi,\varphi) = \int_{\F^{\times} \backslash \A^{\times}} (\varphi-\varphi_{\grN}^*)(a(y)) \chi(y) \norm[y]_{\A}^{s-\frac{1}{2}} d^{\times}y, $$
where we recall the \emph{essential constant term}
	$$ \varphi_{\gp{N}}^*(n(x)a(y)k)=\varphi_{\gp{N}}^*(a(y)k)=\sum_{i=1}^l \chi_i(y) \norm[y]_{\ag{A}}^{\frac{1}{2}+\alpha_i} \log^{n_i} \norm[y]_{\ag{A}} f_i(k). $$
\label{ZetaFunctDef}
\end{definition}
\begin{proposition}
	$\zeta(s,\chi,\varphi)$ has a meromorphic continuation to $s \in \C$ with functional equation
	$$ \zeta(s,\chi,\varphi) = \zeta(1-s,\omega^{-1}\chi^{-1},w.\varphi). $$
	It has possible poles at $s=-\alpha_j-i\mu(\chi_j\chi)$ with $\chi_j\chi(\ag{A}^{(1)}) = 1$ resp. $s=1+\alpha_j+i\mu(\chi_j\omega^{-1}\chi^{-1})$ with $\chi_j\omega^{-1}\chi^{-1}(\ag{A}^{(1)}) = 1$, with pure order $n_j+1$. Here $\mu(\chi) \in \ag{R}$ is defined for $\chi(\ag{A}^{(1)})=1$ as $\chi(t) = \norm[t]_{\ag{A}}^{i\mu(\chi)}$.
\label{GlobZetaFR}
\end{proposition}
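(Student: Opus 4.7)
The plan is to run a Tate-style argument, splitting the $y$-integral at $\norm[y]_{\A}=1$ and swapping the two halves by the Weyl involution. By the defining property of $\Aut^{\freg}(\GL_2,\omega)$, the function $\varphi-\varphi_\grN^*$ is of rapid decay as $\norm[y]_{\A}\to\infty$, so the truncated piece
$$ I_+(s,\chi,\varphi) := \int_{\norm[y]_{\A}\geq 1} (\varphi-\varphi_\grN^*)(a(y))\chi(y)\norm[y]_{\A}^{s-\frac{1}{2}}\,d^\times y $$
is entire in $s$ and of moderate vertical growth. It remains to continue the complementary piece over $\norm[y]_{\A}<1$, which for $\Re s\gg 1$ splits as $\int \varphi(a(y))\cdots - \int \varphi_\grN^*(a(y))\cdots$.

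For the first of these I use the identity $\varphi(a(y))=\omega(y)\,(w.\varphi)(a(y^{-1}))$, which follows from $w\,a(y)=(yI)\cdot a(y^{-1})\,w$ in $\GL_2$ together with $\GL_2(\F)$-invariance of $\varphi$ and its central character $\omega$. The substitution $y\mapsto y^{-1}$ then converts it into a $\norm[y]_{\A}>1$ integral of $w.\varphi$ against the dual character $\omega^{-1}\chi^{-1}$ at the dual parameter $1-s$, and inserting $\pm(w.\varphi)_\grN^*$ separates it further into $I_+(1-s,\omega^{-1}\chi^{-1},w.\varphi)$, which is again entire, plus a cusp integral of $(w.\varphi)_\grN^*$ over $\norm[y]_{\A}>1$. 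The two remaining cusp integrals — of $\varphi_\grN^*$ over $\norm[y]_{\A}<1$ and of $(w.\varphi)_\grN^*$ over $\norm[y]_{\A}>1$ — are evaluated by factoring $\F^\times\backslash\A^\times \simeq (\F^\times\backslash\A^{(1)})\times\R_{>0}$: the compact integration kills every summand whose character is nontrivial on $\A^{(1)}$, and the remaining Mellin integrals $\int_0^1 t^a (\log t)^{n_j}\,dt/t$ and $\int_1^\infty t^a (\log t)^{n_j}\,dt/t$ continue meromorphically with a single pole of order $n_j+1$ at $a=0$, which lands at exactly the locations $s=-\alpha_j-i\mu(\chi_j\chi)$ and $s=1+\alpha_j+i\mu(\chi_j\omega^{-1}\chi^{-1})$ stated in the proposition.

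Collecting the four pieces writes $\zeta(s,\chi,\varphi)$ as $I_+(s,\chi,\varphi)+I_+(1-s,\omega^{-1}\chi^{-1},w.\varphi)$ plus two explicit meromorphic cusp terms, yielding the meromorphic continuation with the stated pole structure. The functional equation is then immediate from the symmetry of the decomposition under $(s,\chi,\varphi)\leftrightarrow(1-s,\omega^{-1}\chi^{-1},w.\varphi)$, using $w.(w.\varphi)=\varphi$ — valid because $w^2$ is central and $\omega(-1)=1$ thanks to $-1\in\F^\times$. The main obstacle I anticipate is the compatibility that the statement itself presupposes: both pole families are labelled by the \emph{same} tuple $(\chi_j,\alpha_j,n_j)$, so one has to know that the essential constant term data of $w.\varphi$ agrees, as a multiset, with that of $\varphi$. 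This is a structural fact about $\Aut^{\freg}(\GL_2,\omega)$ that must be imported from \cite{Wu6}; once granted, matching the two pole families becomes mere indexing and everything else reduces to the elementary Tate-style manipulation above.
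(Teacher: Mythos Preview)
Your argument is correct and essentially identical to the paper's: split at $\norm[y]_{\A}=1$, flip the small-$y$ half via the identity $\varphi(a(y))=\omega(y)\,(w.\varphi)(a(y^{-1}))$, and compute the two constant-term integrals explicitly. The paper organizes the four pieces slightly differently (it inserts $\pm\,w.\varphi_{\grN}^*$ before changing variables rather than after), but the content is the same, and its displayed residue formula with $f_j(1)$ and $f_j(w)$ is exactly what your computation produces.

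Your closing worry is unnecessary. The compatibility you flag is not a deep fact to be imported from \cite{Wu6}: right translation by $w$ commutes with taking the constant term along $\grN$ (which integrates on the left), so $(w.\varphi)_{\grN}^*(n(x)a(y)k)=\varphi_{\grN}^*(n(x)a(y)kw)=\sum_j \chi_j(y)\norm[y]_{\A}^{1/2+\alpha_j}\log^{n_j}\norm[y]_{\A}\,f_j(kw)$. Thus $w.\varphi$ has literally the same data $(\chi_j,\alpha_j,n_j)$, with $f_j$ replaced by $k\mapsto f_j(kw)$; this is why the paper's polar part carries $f_j(1)$ for one family and $f_j(w)$ for the other.
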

\begin{proof}
	By the invariance of $\varphi$ at left by $w$, we can re-write the zeta-integral as
\begin{align*}
	\zeta(s,\chi,\varphi) &= \int_{\substack{y \in \F^{\times} \backslash \A^{\times} \\ \norm[y]_{\A} \geq 1}} (\varphi-\varphi_{\grN}^*)(a(y)) \chi(y) \norm[y]_{\A}^{s-\frac{1}{2}} d^{\times}y \\
	&\quad + \int_{\substack{y \in \F^{\times} \backslash \A^{\times} \\ \norm[y]_{\A} \leq 1}} (w.\varphi-w.\varphi_{\grN}^*)(a(y^{-1})) \omega\chi(y) \norm[y]_{\A}^{s-\frac{1}{2}} d^{\times}y \\
	&\quad - \int_{\substack{y \in \F^{\times} \backslash \A^{\times} \\ \norm[y]_{\A} \leq 1}} \varphi_{\grN}^*(a(y)) \chi(y)\norm[y]_{\A}^{s-\frac{1}{2}} d^{\times}y \\
	&\quad + \int_{\substack{y \in \F^{\times} \backslash \A^{\times} \\ \norm[y]_{\A} \leq 1}} w.\varphi_{\grN}^*(a(y^{-1})) \omega\chi(y)\norm[y]_{\A}^{s-\frac{1}{2}} d^{\times}y.
\end{align*}
	We can calculate the integral concerning $\varphi_{\grN}^*$ and get
\begin{align*}
	\zeta(s,\chi,\varphi) &= \int_{\substack{y \in \F^{\times} \backslash \A^{\times} \\ \norm[y]_{\A} \geq 1}} (\varphi-\varphi_{\grN}^*)(a(y)) \chi(y) \norm[y]_{\A}^{s-\frac{1}{2}} d^{\times}y \\
	&\quad + \int_{\substack{y \in \F^{\times} \backslash \A^{\times} \\ \norm[y]_{\A} \geq 1}} (w.\varphi-w.\varphi_{\grN}^*)(a(y)) \omega^{-1}\chi^{-1}(y) \norm[y]_{\A}^{\frac{1}{2}-s} d^{\times}y + \zeta_{\F}^*(1) \cdot \\
	&\quad \left( \sum_{\chi_j\chi \mid_{\A^{(1)}} =1} \frac{(-1)^{n_j+1} f_j(1)}{(s+\alpha_j + i\mu(\chi_j\chi))^{n_j+1}} + \sum_{\chi_j\omega^{-1}\chi^{-1} \mid_{\A^{(1)}} =1} \frac{(-1)^{n_j+1} f_j(w)}{(1-s+\alpha_j + i\mu(\chi_j\omega^{-1}\chi^{-1}))^{n_j+1}} \right),
\end{align*}
	from which we easily deduce all the assertions.
\end{proof}

	We turn to the special case $\varphi(g) = \eis^*(s_0,\xi,\omega\xi^{-1};f)(g)$ the usual completed Eisenstein series resp. $\eis^{\reg}(s_0,\xi,\omega\xi^{-1};f)(g)$ (\ref{RegEisDef}), for which the local computation is the same as for a cusp form. Note that in this case $\varphi_{\grN}^*=\varphi_{\grN}$, hence $\varphi(a(y))-\varphi_{\grN}^*(a(y)) = \Sigma_{\alpha \in \F^{\times}} W_{\varphi}(a(\alpha y))$.
\begin{proposition}
	Let $\varphi(g) = \eis^*(s_0,\xi,\omega\xi^{-1};f)(g)$ resp. $\eis^{\reg}(s_0,\xi,\omega\xi^{-1};f)(g)$ where $\xi,\omega$ are Hecke characters and $f \in \pi_{\xi,\omega\xi^{-1}}$. The zeta-functional has a decomposition as an Euler product in which only a finite number of terms are not equal to $1$:
\begin{align*}
	\zeta(s,\chi,\varphi) &= \Lambda(s+s_0, \xi\chi) \Lambda(s-s_0,\omega\xi^{-1}\chi) \cdot \\
	&\quad \prod_v \frac{L_v(1+2s_0,\omega_v^{-1}\xi_v^2)}{L_v(s+s_0,\xi_v\chi_v)L_v(s-s_0,\omega_v\xi_v^{-1}\chi_v)} \int_{\F_v^{\times}} W_{f_v}^{(s_0)}(a(y_v)) \norm[y_v]_v^{s-\frac{1}{2}} d^{\times}y_v \text{ resp.}
\end{align*}
\begin{align*}
	\zeta(s,\chi,\varphi) &= \frac{\Lambda(s+s_0, \xi\chi) \Lambda(s-s_0,\omega\xi^{-1}\chi)}{\Lambda(1+2s_0,\omega^{-1}\xi^2)} \cdot \\
	&\quad \prod_v \frac{L_v(1+2s_0,\omega_v^{-1}\xi_v^2)}{L_v(s+s_0,\xi_v\chi_v)L_v(s-s_0,\omega_v\xi_v^{-1}\chi_v)} \int_{\F_v^{\times}} W_{f_v}^{(s_0)}(a(y_v)) \norm[y_v]_v^{s-\frac{1}{2}} d^{\times}y_v.
\end{align*}
\label{EulerZetaEis}
\end{proposition}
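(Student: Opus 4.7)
The plan is to mimic the Tate unfolding of the global Rankin--Selberg integral, adapted to the non square-integrable Eisenstein setting. First, since $\varphi$ is an Eisenstein series induced from the minimal parabolic with inducing data in $\pi_{\xi,\omega\xi^{-1}}$, its constant term along $\grN$ coincides with the asymptotic constant term $\varphi_{\grN}^*$, as noted in the remarks preceding the statement. Hence the Fourier expansion along $\grN$ yields $(\varphi - \varphi_{\grN}^*)(a(y)) = \sum_{\alpha \in \F^{\times}} W_\varphi(a(\alpha y))$. Substituting into Definition \ref{ZetaFunctDef} and folding this sum into the quotient integral turns $\zeta(s,\chi,\varphi)$, for $\Re s$ sufficiently large that the resulting expression is absolutely convergent, into the single integral
$$\zeta(s,\chi,\varphi) = \int_{\A^{\times}} W_\varphi(a(y)) \chi(y) \norm[y]_{\A}^{s-\frac{1}{2}} d^{\times}y.$$

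Next I would invoke the factorization of the Whittaker coefficient of an Eisenstein series. For a pure tensor $f = \otimes_v f_v \in \pi_{\xi,\omega\xi^{-1}}$ and for $\eis^*$ normalized as in \cite{Wu6}, one has $W_{\eis^*(s_0,\xi,\omega\xi^{-1};f)}(g) = \prod_v W_{f_v}^{(s_0)}(g_v)$, where $W_{f_v}^{(s_0)}$ is given by the standard local Jacquet integral; for $\eis^{\reg}$ the same identity holds after dividing by the global scalar $\Lambda(1+2s_0,\omega^{-1}\xi^2)$. Combined with the factorizations $\chi = \otimes_v \chi_v$ and $\norm[\cdot]_{\A} = \otimes_v \norm[\cdot]_v$, this reduces the global integral to an honest product of local zeta integrals of the form displayed in the statement.

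At each unramified place the Casselman--Shalika formula evaluates the local zeta integral to the ratio $L_v(s+s_0,\xi_v\chi_v) L_v(s-s_0,\omega_v\xi_v^{-1}\chi_v) / L_v(1+2s_0,\omega_v^{-1}\xi_v^2)$. Multiplying and dividing place by place by this ratio extracts the global completed $L$-functions $\Lambda(s+s_0,\xi\chi)$, $\Lambda(s-s_0,\omega\xi^{-1}\chi)$ and $\Lambda(1+2s_0,\omega^{-1}\xi^2)$, leaving a finite product of correction ratios supported on the ramified places. For $\eis^*$ the denominator $\Lambda(1+2s_0,\omega^{-1}\xi^2)$ cancels against the Whittaker normalization factor carried by $\eis^*$, producing the first formula; for $\eis^{\reg}$ it persists, producing the second.

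The main delicacy, as I see it, is verifying the Whittaker factorization for the \emph{regularized} Eisenstein series $\eis^{\reg}$: one must confirm that Zagier's regularization procedure of \cite{Wu6} does not disturb the Euler product of the Whittaker coefficient. Once one observes that the regularization modifies $\varphi$ only in its constant-term component---which is precisely what is subtracted off in passing to $\varphi - \varphi_{\grN}^*$---the factorization for $\eis^{\reg}$ follows from that for $\eis^*$ up to the single global scalar $\Lambda(1+2s_0,\omega^{-1}\xi^2)^{-1}$, and the remainder of the argument is bookkeeping identical to the cuspidal case of \cite{Wu14}.
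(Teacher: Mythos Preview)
Your argument is correct and matches the paper's own treatment: the paper does not supply a separate proof environment for this proposition, but rather disposes of it in the sentence immediately preceding the statement, observing that $\varphi_{\grN}^* = \varphi_{\grN}$ so that $\varphi - \varphi_{\grN}^*$ unfolds to the Whittaker sum, and remarking that ``the local computation is the same as for a cusp form.'' Your write-up is simply a fuller version of that sentence.

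One small terminological point: in your last paragraph you speak of ``Zagier's regularization procedure'' as if $\eis^{\reg}$ were obtained by regularizing an integral. In this paper $\eis^{\reg}$ and $\eis^*$ differ only by the global scalar $\Lambda(1+2s_0,\omega^{-1}\xi^2)$ (and, at special values of $s_0$, by subtraction of a constant term to kill a pole). Either way your conclusion is right---the Whittaker expansion, hence the Euler product, is unaffected---but the phrase ``Zagier regularization'' is better reserved for the regularized \emph{integrals} $\int^{\reg}$ that appear later in the paper.
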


The way given in the proof of Proposition \ref{GlobZetaFR} is not the only way of the analytic continuation of the global zeta functional. Another version of truncation on the integral is closely related to the classical approximate functional equation. Let $h_0$ be a smooth function supported in the inteval $[0,2)$, being equal to $1$ on $[0,1]$. For any $A>0$, we denote by $h_{0,A}$ the function $t \mapsto h_0(t/A)$. We then have for $\Re s \gg 1$
\begin{align}
	\zeta(\frac{1}{2}+s,\chi,\varphi) &= \int_{\F^{\times} \backslash \A^{\times}} (\varphi-\varphi_{\grN}^*)(a(y)) \chi(y) \norm[y]_{\A}^s (1-h_{0,A}(\norm[y]_{\A})) d^{\times}y \label{GZFundE} \\
	&\quad + \int_{\F^{\times} \backslash \A^{\times}} (w.\varphi-w.\varphi_{\grN}^*)(a(y)) \omega^{-1}\chi^{-1}(y) \norm[y]_{\A}^{-s} h_{0,A}(\norm[y]_{\A}^{-1}) d^{\times}y \nonumber \\
	&\quad - \int_{\F^{\times} \backslash \A^{\times}} \varphi_{\grN}^*(a(y)) \chi(y)\norm[y]_{\A}^s h_{0,A}(\norm[y]_{\A}) d^{\times}y \nonumber \\
	&\quad + \int_{\F^{\times} \backslash \A^{\times}} w.\varphi_{\grN}^*(a(y)) \omega^{-1}\chi^{-1}(y)\norm[y]_{\A}^{-s} h_{0,A}(\norm[y]_{\A}^{-1}) d^{\times}y. \nonumber
\end{align}
	For the last two lines, it is not hard to compute their analytic continuation using the form of $\varphi_{\grN}^*$ and the analytic continuation of the Mellin transform of $h_0$ as (since $h_0'$ is of compact support contained in $(1,2)$)
\begin{align*}
	\Mellin{h_0}(s) &= \int_0^{\infty} h_0(t) t^s \frac{dt}{t} = \frac{1}{s} - \frac{\Mellin{h_0'}(s+1)+1}{s} \\
	&= (-1)^N \prod_{j=0}^{N-1} (s+j)^{-1} \Mellin{h_0^{(N)}}(s+N), \forall N \in \N, s \in \C.
\end{align*}
\begin{remark}
	We also have, first for $\Re s \ll -1$ then for $s \in \C$
	$$ \Mellin{1-h_0}(s) = \int_0^{\infty} (1-h_0)(t) t^s \frac{dt}{t} = \frac{\Mellin{h_0'}(s+1)}{s} = -\Mellin{h_0}(s). $$
\end{remark}
\noindent Then the last two lines of (\ref{GZFundE}) are defined for $s \in \C$ as, writing $s_j=1/2+\alpha_j+i\mu(\chi_j\chi)$
\begin{align*}
	&\quad -\zeta_{\F}^*(1)\sum_{j=1}^l f_j(1) \delta_{\chi_j\chi} \frac{\partial^{n_j}}{\partial s^{n_j}} \left( A^{s+s_j} \Mellin{h_0}(s+s_j) \right) \\
	&= \zeta_{\F}^*(1)\sum_{j=1}^l f_j(1) \delta_{\chi_j\chi} \frac{(-1)^{n_j+1}}{(s+s_j)^{n_j+1}} - \zeta_{\F}^*(1)\sum_{j=1}^l f_j(1) \delta_{\chi_j\chi} \cdot \\
	&\quad \sum_{k=0}^{n_j} \binom{n_j}{k} (\log A)^{n_j-k+1} \int_0^{\infty} h_0'(t) t^{s+s_j} \log^k t dt \cdot \int_0^1 A^{\delta (s+s_j)} \delta^{n_j-k} d \delta - \\
	&\quad \zeta_{\F}^*(1)\sum_{j=1}^l f_j(1) \delta_{\chi_j\chi} \int_0^{\infty} h_0'(t) \left( \int_0^1 \delta^{n_j} t^{\delta (s+s_j)} d \delta \right) \log^{n_j+1}t dt,
\end{align*}
	and writing $s_j'=1/2+\alpha_j+i\mu(\chi_j\omega^{-1}\chi^{-1})$
\begin{align*}
	&\quad \zeta_{\F}^*(1) \sum_{j=1}^l f_j(w) \delta_{\chi_j\omega^{-1}\chi^{-1}} (-1)^{n_j} \cdot \frac{\partial^{n_j}}{\partial s^{n_j}} \left( A^{s-s_j'} \Mellin{h_0}(s-s_j') \right) \\
	&= \zeta_{\F}^*(1) \sum_{j=1}^l f_j(w) \delta_{\chi_j\omega^{-1}\chi^{-1}} \frac{(-1)^{n_j+1}}{(s_j'-s)^{n_j+1}} + \zeta_{\F}^*(1)\sum_{j=1}^l f_j(w) \delta_{\chi_j\omega^{-1}\chi^{-1}} (-1)^{n_j} \cdot \\
	&\quad \sum_{k=0}^{n_j} \binom{n_j}{k} (\log A)^{n_j-k+1} \int_0^{\infty} h_0'(t) t^{s-s_j'} \log^k t dt \cdot \int_0^1 A^{\delta (s-s_j')} \delta^{n_j-k} d \delta + \\
	&\quad \zeta_{\F}^*(1)\sum_{j=1}^l f_j(w) \delta_{\chi_j\omega^{-1}\chi^{-1}} (-1)^{n_j} \int_0^{\infty} h_0'(t) \left( \int_0^1 \delta^{n_j} t^{\delta (s-s_j')} d \delta \right) \log^{n_j+1}t dt.
\end{align*}
	We separate the terms in the sum $\Sigma_{j=1}^l$ according as $s_j=0$ and $s_j \neq 0$ resp. $s_j'=0$ and $s_j' \neq 0$. For $s_j=0$ resp. $s_j'=0$, the finite part at $s=0$ is bounded, with implied constants depending only on $\F,n_j,h_0$, by
	$$ \sum_{j=1}^l \delta_{\chi_j\chi} 1_{s_j=0} O(\norm[f_j(1) \log^{n_j+1}A]) \text{ resp. } \sum_{j=1}^l \delta_{\chi_j\omega^{-1}\chi^{-1}} 1_{s_j'=0} O(\norm[f_j(w) \log^{n_j+1}A]). $$
	For $s_j \neq 0$ resp. $s_j' \neq 0$, they are of size at $s=0$, with implied constants depending only on $\F,\alpha_j,n_j,h_0$ and an arbitrary $N \in \N$,
	$$ \sum_{j=1}^l \delta_{\chi_j\chi} 1_{s_j \neq 0} O\left( \frac{A^{\Re s_j} \norm[f_j(1) \log^{n_j}A]}{\norm[s_j]^N} \right) \text{ resp. } \sum_{j=1}^l \delta_{\chi_j\omega^{-1}\chi^{-1}} 1_{s_j' \neq 0} O \left( \frac{A^{-\Re s_j'} \norm[f_j(w) \log^{n_j}A]}{\norm[s_j']^N} \right). $$
	The first resp. second line of (\ref{GZFundE}) is supported in $\norm[y]_{\A} \in [A, \infty)$ resp. $[(2A)^{-1}, \infty)$, hence is well-defined for all $s \in \C$ by the rapid decay of $\varphi-\varphi_{\grN}^*$. For the second line at $s=0$, we can apply Mellin inversion to see
\begin{align*}
	&\quad \int_{\F^{\times} \backslash \A^{\times}} (w.\varphi-w.\varphi_{\grN}^*)(a(y)) \omega^{-1}\chi^{-1}(y) h_{0,A}(\norm[y]_{\A}^{-1}) d^{\times}y \\
	&= \zeta_{\F}^*(1) \int_{\Re s_1=c_1 \gg 1} A^{s_1} \zeta(\frac{1}{2}+s_1,\omega^{-1}\chi^{-1},w.\varphi) \Mellin{h_0}(s_1) \frac{ds_1}{2\pi i}
\end{align*}
	which is bounded, with implied constant depending only on $\F,h_0$ and an arbitrary $N \in \N$, as
	$$ A^{c_1} O \left( \int_{\Re s_1=c_1 \gg 1} \frac{\norm[\zeta(\frac{1}{2}+s_1,\omega^{-1}\chi^{-1},w.\varphi)]}{\prod_{m=0}^{N-1} \norm[s_1+m]} \frac{\norm[ds_1]}{2\pi} \right), $$
	where $c_1$ can be chosen as any real number such that the integral defining $\zeta(1/2+s_1,\omega^{-1}\chi^{-1},w.\varphi)$ is absolutely convergent for $\Re s_1 \geq c_1$. Similarly, we have for any $B>0$ that
\begin{align*}
	&\quad \int_{\F^{\times} \backslash \A^{\times}} (\varphi-\varphi_{\grN}^*)(a(y)) \chi(y) (1-h_{0,B})(\norm[y]_{\A}) d^{\times}y \\
	&= -\int_{\Re s_2=c_2 \gg 1} B^{-s_2} \zeta(\frac{1}{2}+s_2,\chi,\varphi) \Mellin{h_0}(-s_2) \frac{ds_2}{2\pi i}
\end{align*}
	is bounded, with implied constant depending only on $\F,h_0$ and an arbitrary $N \in \N$ as
	$$ B^{-c_2} O \left( \int_{\Re s_2=c_2 \gg 1} \frac{\norm[\zeta(\frac{1}{2}+s_2,\chi,\varphi)]}{\prod_{m=0}^{N-1} \norm[s_2+m]} \frac{\norm[ds_2]}{2\pi} \right), $$
	where $c_2$ can be chosen as any real number such that the integral defining $\zeta(1/2+s_2,\chi,\varphi)$ is absolutely convergent for $\Re s_2 \geq c_2$.
\begin{definition}
	For any function $h: \R_+ \to \C$ and any Hecke character $\chi$, we define the $h$-\textbf{truncated (zeta-)integral} on $\Aut^{\freg}(\GL_2,\omega)$ as
	$$ \zeta(h,\chi,\varphi) = \int_{\F^{\times} \backslash \A^{\times}} h(\norm[y]_{\A}) \chi(y) (\varphi-\varphi_{\grN}^*)(a(y)) d^{\times}y. $$
\label{h-TruncZeta}
\end{definition}
	As a summary, we have obtained:
\begin{proposition}
	Take $h_0$ as indicated in the beginning, some positive constants $0<A<B$ and define $h(t)=h_{0,B}(t)-h_{0,A}(t), t>0$. For any $\varphi \in \Aut^{\freg}(\GL_2,\omega)$ with $\chi_i,\alpha_i,n_i$ given in Definition \ref{ZetaFunctDef}, we write
	$$ s_j=1/2+\alpha_j+i\mu(\chi_j\chi) \text{ resp. } s_j'=1/2+\alpha_j+i\mu(\chi_j\omega^{-1}\chi^{-1}) $$
	if $\chi_j\chi$ resp. $\chi_j\omega^{-1}\chi^{-1}$ is trivial on $\A^{(1)}$, and $\mu$ defined in Proposition \ref{GlobZetaFR}. Then the difference
	$$ \zeta^{\hol}(\frac{1}{2},\chi,\varphi) - \zeta(h,\chi,\varphi) $$
	is bounded, with implied constants depending only on $\F,\alpha_j,n_j,h_0$ and an arbitrary $N \in \N$, as the sum of
\begin{itemize}
	\item[(1)] \textbf{Degenerate Polar Part:}
	$$ \sum_{j=1}^l \delta_{\chi_j\chi} 1_{s_j=0} O(\norm[f_j(1) \log^{n_j+1}A]) + \sum_{j=1}^l \delta_{\chi_j\omega^{-1}\chi^{-1}} 1_{s_j'=0} O(\norm[f_j(w) \log^{n_j+1}A]). $$
	\item[(2)] \textbf{Normal Polar Part:}
	$$ \sum_{j=1}^l \delta_{\chi_j\chi} 1_{s_j \neq 0} O\left( \frac{A^{\Re s_j} \norm[f_j(1) \log^{n_j}A]}{\norm[s_j]^N} \right) + \sum_{j=1}^l \delta_{\chi_j\omega^{-1}\chi^{-1}} 1_{s_j' \neq 0} O \left( \frac{A^{-\Re s_j'} \norm[f_j(w) \log^{n_j}A]}{\norm[s_j']^N} \right). $$
	\item[(3)] \textbf{Lower Part:}
	$$ A^{c_1} O \left( \int_{\Re s=c_1 \gg 1} \frac{\norm[\zeta(\frac{1}{2}+s,\omega^{-1}\chi^{-1},w.\varphi)]}{\prod_{m=0}^{N-1} \norm[s+m]} \frac{\norm[ds]}{2\pi} \right). $$
	\item[(4)] \textbf{Upper Part:}
	$$ B^{-c_2} O \left( \int_{\Re s=c_2 \gg 1} \frac{\norm[\zeta(\frac{1}{2}+s,\chi,\varphi)]}{\prod_{m=0}^{N-1} \norm[s+m]} \frac{\norm[ds]}{2\pi} \right). $$
\end{itemize}
	In (3) resp. (4), $c_1>0$ resp. $c_2>0$ is any real number such that the integral defining $\zeta(1/2+s,\omega^{-1}\chi^{-1},w.\varphi)$ resp. $\zeta(1/2+s,\chi,\varphi)$ is absolutely convergent for $\Re s \geq c_1$ resp. $\Re s \geq c_2$.
\label{ApproxFE}
\end{proposition}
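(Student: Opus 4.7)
The plan is to assemble the pieces already derived in the paragraphs preceding the statement; the argument will be essentially a bookkeeping exercise. My starting point is the four-term decomposition of $\zeta(\frac{1}{2}+s,\chi,\varphi)$ obtained via the smooth cutoff $h_{0,A}$ and the left $w$-invariance of $\varphi$. In that decomposition I would refine the first (``upper'') integral with weight $1-h_{0,A}$ by writing
$$ 1-h_{0,A}(t) = h(t) + (1-h_{0,B}(t)), $$
so that the $h$-piece, evaluated at $s=0$, is precisely $\zeta(h,\chi,\varphi)$. After this refinement, $\zeta^{\hol}(\frac{1}{2},\chi,\varphi)-\zeta(h,\chi,\varphi)$ is the sum of four explicit contributions evaluated at $s=0$: the residual $(1-h_{0,B})$-integral against $\varphi-\varphi_{\grN}^*$; the $w.\varphi-w.\varphi_{\grN}^*$ integral weighted by $h_{0,A}(\norm[y]_{\A}^{-1})$; and the two constant-term integrals built from $\varphi_{\grN}^*$ and $w.\varphi_{\grN}^*$.

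First I would dispose of the two constant-term contributions. Their analytic continuation is already governed by the $\partial^{n_j}/\partial s^{n_j}$ formulae applied to $A^{s+s_j}\Mellin{h_0}(s+s_j)$ and $A^{s-s_j'}\Mellin{h_0}(s-s_j')$ spelled out just above the statement. I would separate the sum over $j$ according to whether $s_j=0$ or $s_j\neq 0$ (resp.\ whether $s_j'=0$ or $s_j'\neq 0$): the degenerate case produces the $\log^{n_j+1}A$ bound giving the \textbf{Degenerate Polar Part} (1), while the non-degenerate case, upon invoking the iterate identity $\Mellin{h_0}(s)=(-1)^N\prod_{j=0}^{N-1}(s+j)^{-1}\Mellin{h_0^{(N)}}(s+N)$ to extract polynomial decay in $s_j$, yields the \textbf{Normal Polar Part} (2).

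For the residual $(1-h_{0,B})$-integral I would apply Mellin inversion using $\Mellin{1-h_0}(s)=-\Mellin{h_0}(s)$ from the Remark, moving the contour to $\Re s=c_2\gg 1$, to rewrite it as
$$ -\int_{\Re s=c_2} B^{-s}\,\zeta(\tfrac{1}{2}+s,\chi,\varphi)\,\Mellin{h_0}(-s)\,\frac{ds}{2\pi i}, $$
and then use the iterate identity once more to introduce the denominator $\prod_{m=0}^{N-1}\norm[s+m]$; this is precisely the \textbf{Upper Part} (4). The analogous manipulation of the $w.\varphi-w.\varphi_{\grN}^*$ integral, now anchored on the dual form with the dual character $\omega^{-1}\chi^{-1}$, gives the \textbf{Lower Part} (3).

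I do not expect a real obstacle here; the main work is the bookkeeping of derivatives, binomial coefficients, and residue contributions, all of which has already been discharged in the preamble to the statement. The only subtlety worth flagging is that the contour shifts used in the Mellin inversions bring $\Re s$ up to $c_1,c_2\gg 1$ without crossing any singularities beyond those absorbed into (1)--(2); this is guaranteed by Proposition \ref{GlobZetaFR}, which locates all poles of the regularized zeta functional precisely at the points recorded there.
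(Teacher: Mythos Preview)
Your proposal is correct and follows essentially the same approach as the paper. Indeed, the paper's ``proof'' of this proposition is precisely the discussion preceding it (the proposition is stated after ``We have obtained:''), and your bookkeeping---splitting $1-h_{0,A}=h+(1-h_{0,B})$ in the first line, computing the two constant-term integrals via the Mellin transform identities for $h_0$ and separating degenerate from non-degenerate $s_j,s_j'$, and expressing the remaining two lines by Mellin inversion on the contours $\Re s=c_1,c_2$---matches the paper step for step.
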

\begin{remark}
	We will use the bound for the normal polar part in the case $s_j$ is bounded away from $0$.
\end{remark}

	\subsection{Classical Vectors in Spherical Series}
	
	Let $\F$ be a non-archimedean local field with uniformizer $\varpi$, absolute valuation $\norm$, valuation ring $\vo$ \& ideal $\vp$ and cardinality of the residue class field $q$. Denote by $\pi_s = \Ind_{\gp{B}(\F)}^{\GL_2(\F)}(\norm^s, \norm^{-s})$ the principal series representation of $\PGL_2(\F)$, where $s \in \ag{C}$. For $s \in i \ag{R}$, $\pi_s$ are unitary with the underlying Hilbert spaces identified with the same one 
	$$ \Res_{\gp{K}}^{\GL_2(\F)} \pi_s = \Ind_{\gp{B}(\F) \cap \gp{K}}^{\gp{K}} (1,1) =: H, \quad \gp{K} = \GL_2(\vo). $$
	We regard $\pi_s, s \in \ag{C}$ as a family of representations of $\GL_2(\F)$ on $H$. By the Branching law, there is a canonical decomposition as $\gp{K}$-representations
	$$ H = \sideset{}{_{n \geq 0}} \bigoplus H_n, $$
where $H_n$ is an irreducible $\gp{K}$-subspace of $H$ generated by a unitary vector $e_n$, such that $\{ e_0, \cdots, e_m \}$ form an orthonormal basis of the $\gp{K}_0[\vp^m]$-invariant subspace of $H$ for any $m \in \ag{N}$. These vectors $e_n$, called ``classical vectors'' \footnote{They are called ``paramodular'' vectors by Brooks Roberts and Ralf Schmidt.} in \cite[Definition 5.4]{Wu14}, are defined up to a factor of modulus $1$. We determine/choose them as follows. First of all, we impose
	$$ e_0(\kappa) = 1, \quad \forall \kappa \in \gp{K}. $$
\begin{lemma}
	Let $e_n' \in \pi_s$ be defined by
	$$ e_1' = \pi_s(a(\varpi^{-1})).e_0 - \frac{q^s+q^{-s}}{q^{1/2}+q^{-1/2}} e_0, $$
	$$ e_n' = \pi_s(a(\varpi^{-n})).e_0 - q^{-1/2} (q^s+q^{-s}) \pi_s(a(\varpi^{-n+1})).e_0 + q^{-1} \pi_s(a(\varpi^{-n+2})).e_0, \quad \forall n \geq 2. $$
	Then if $s \in i\ag{R}$, $\{ e_0, e_1', \dots, e_m' \}$ is an orthogonal basis of the $\gp{K}_0[\vp^m]$-invariant subspace of $H$ for any $m \in \ag{N}$.
\label{BCRep}
\end{lemma}
\begin{proof}
	If $s \in i\ag{R}$, $\{ e_0, \pi_s(a(\varpi^{-1})).e_0, \dots, \pi_s(a(\varpi^{-m})).e_0 \}$ is a basis of the $\gp{K}_0[\vp^m]$-invariant subspace of $H$ for any $m \in \ag{N}$. Then use the Macdonald formula \cite[Theorem 4.6.6]{Bu98} to verify that $e_n'$ is orthogonal to $\pi_s(a(\varpi^{-m})).e_0$ for $0 \leq m \leq n-1$.
\end{proof}
\begin{lemma}
	If we define
	$$ e_1 := \frac{q^{1/2}+q^{-1/2}}{q^{s+1/2}-q^{-s-1/2}} e_1', \quad e_n := \sqrt{\frac{q+1}{q-1}} \frac{q^{-ns}}{1-q^{-1-2s}} e_n', \forall n \geq 2, $$
	then $e_n$ is independent of $s$ and $\{ e_0, \cdots, e_m \}$ form an orthonormal basis of the $\gp{K}_0[\vp^m]$-invariant subspace of $H$ for any $m \in \ag{N}$. Moreover, the dimension $d_n$ of $H_n$ is given by
	$$ d_0=1, d_1=q, d_n=q^n-q^{n-2}, n \geq 2. $$
\label{BCRepN}
\end{lemma}
\begin{proof}
	If we write $ \vo_n = \varpi^n\vo - \varpi^{n+1}\vo, n \geq 1$, then
	$$ D_0 =  \grB(\vo)w\grN(\vo) = \left\{ \begin{pmatrix} a & b \\ c & d \end{pmatrix} \in \grK: c \in \vo^{\times} \right\}, $$
	$$ D_n = \grB(\vo)\grN_-(\vo_n) =  \left\{ \begin{pmatrix} a & b \\ c & d \end{pmatrix} \in \grK: c \in \vo_n \right\}, 1 \leq n \leq m, $$
	$$ D_m' = \grK_0[m] = \cup_{n=m}^{\infty} D_n $$
	are the double cosets of $\grK$ w.r.t. $\grB(\vo)$ and $\grK_0[m]$. From the computation
	$$ \begin{pmatrix} a \varpi^{-n} & b \\ c \varpi^{-n} & d \end{pmatrix} \begin{pmatrix} c^{-1}d \varpi^n & 1 \\ -1 & 0 \end{pmatrix} = \begin{pmatrix} c^{-1}(ad-bc) & * \\ 0 & c \varpi^{-n} \end{pmatrix}, $$
	$$ \begin{pmatrix} a \varpi^{-n} & b \\ c \varpi^{-n} & d \end{pmatrix} \begin{pmatrix} 1 & 0 \\ -d^{-1}c \varpi^{-n} & 1 \end{pmatrix} = \begin{pmatrix} d^{-1} \varpi^{-n} (ad-bc) & * \\ 0 & d \end{pmatrix}, $$
	one easily deduces that
	$$ e_1' \mid_{D_0} = \frac{q^{s+1/2}-q^{-s-1/2}}{q^{1/2}+q^{-1/2}} \cdot (-q^{-1/2}), \quad e_1' \mid_{D_k} = \frac{q^{s+1/2}-q^{-s-1/2}}{q^{1/2}+q^{-1/2}} \cdot q^{1/2}, k \geq 1; $$
	$$ e_n' \mid_{D_k} = 0, 0 \leq k \leq n-2, \quad e_n' \mid_{D_{n-1}} = (1-q^{-1-2s}) q^{n(s+1/2)} (-q^{-1}), $$
	$$ e_n' \mid_{D_k} = (1-q^{-1-2s}) q^{n(s+1/2)} (1-q^{-1}), k \geq n. $$
	The assertion follows since the mass $w_n$ of $D_n$, assuming the mass of $\grK$ is $1$, is given by
	$$ w_0 = \frac{q}{q+1}, w_n=\frac{q^{-(n-1)}}{q+1}(1-q^{-1}), n \geq 1. $$
	For the ``moreover'' part, it suffices to notice
\begin{equation} 
	e_n(\kappa) = d_n^{1/2} \Pairing{\kappa.e_n}{e_n}
\label{CVProj}
\end{equation}
	and to evaluate the above equation at $\kappa=1$.
\end{proof}
\begin{corollary}
	We record some special values of $e_n$:
	$$ e_n(1) = \left\{ \begin{matrix} 1 & n=0; \\ q^{1/2} & n=1; \\ (q^n-q^{n-2})^{1/2} & n \geq 2; \end{matrix} \right. \quad e_n(w) = \left\{ \begin{matrix} 1 & n=0; \\ -q^{-1/2} & n=1; \\ 0 & n \geq 2. \end{matrix} \right. $$
\label{CVSV}
\end{corollary}
\begin{lemma}
	The two base $\{ e_0, e_1, \dots \}$ and $\{ e_0, \pi_s(a(\varpi^{-1})).e_0, \dots \}$ of the subspace of classical vectors in $H$ are related as follows.
\begin{itemize}
	\item[(1)] For $n \geq 2$, we have
	$$ e_1 = \frac{q^{1/2}+q^{-1/2}}{q^{s+1/2}-q^{-s-1/2}} \pi_s(a(\varpi^{-1})).e_0 - \frac{q^s+q^{-s}}{q^{s+1/2}-q^{-s-1/2}} e_0; $$
	$$ e_n = \sqrt{\frac{q+1}{q-1}} \frac{q^{-ns}}{1-q^{-1-2s}} \left( \pi_s(a(\varpi^{-n})).e_0 - q^{-1/2} (q^s+q^{-s}) \pi_s(a(\varpi^{-n+1})).e_0 + q^{-1} \pi_s(a(\varpi^{-n+2})).e_0 \right). $$
	\item[(2)] For $n \geq 2$, we have
	$$ \pi_s(a(\varpi^{-1})).e_0 = \frac{q^{s+1/2}-q^{-s-1/2}}{q^{1/2}+q^{-1/2}} e_1 + \frac{q^s+q^{-s}}{q^{1/2}+q^{-1/2}} e_0; \quad \pi_s(a(\varpi^{-n})).e_0 = \sum_{l=0}^n c(n,l;s) e_l, $$
	where the coefficients $c(n,l;s) = c_{\vp}(n,l;s)$ are given by
	$$ c(n,0;s) = \frac{q^{-\frac{n}{2}}}{1+q^{-1}} \left\{ \frac{q^{(n+1)s}-q^{-(n+1)s}}{q^s-q^{-s}} - q^{-1} \frac{q^{(n-1)s}-q^{-(n-1)s}}{q^s-q^{-s}} \right\}; $$
	$$ c(n,1;s) = \frac{q^{-\frac{n-1}{2}}}{1+q^{-1}} (q^{ns}-q^{-ns}) \frac{1-q^{-1-2s}}{1-q^{-2s}}; $$
	$$ c(n,l;s) = q^{-\frac{n-l}{2}}(q^{ns}-q^{(2l-2-n)s}) \frac{1-q^{-1-2s}}{1-q^{-2s}} \sqrt{\frac{q-1}{q+1}}, 2 \leq l \leq n. $$
	\item[(3)] For $n \geq 2$, we have
	$$ \pi_s(a(\varpi)).e_0 = \frac{q^{s+1/2}-q^{-s-1/2}}{q^{1/2}+q^{-1/2}} w.e_1 + \frac{q^s+q^{-s}}{q^{1/2}+q^{-1/2}} e_0; \quad \pi_s(a(\varpi^n)).e_0 = \sum_{l=0}^n c(n,l;s) w.e_l, $$
	where $w \in \gp{K}$ is the Weyl element and the coefficients $c(n,l;s)$ are the same as in (2).
\end{itemize}
\label{BaseCNA}
\end{lemma}
\begin{proof}
	(1) is merely a re-statement of Lemma \ref{BCRep} and \ref{BCRepN}. For (2), we first use Lemma \ref{BCRep} to deduce a relation of two formal power series
\begin{align*}
	\sum_{n=2}^{\infty} e_n' X^n &= \left( \sum_{n=0}^{\infty} \pi_s(a(\varpi^{-n})).e_0 X^n \right) \left( 1-q^{-1/2}(q^s+q^{-s})X + q^{-1}X^2 \right) \\
	&\quad - e_0 - \pi_s(a(\varpi^{-1})).e_0 X + q^{-1/2}(q^s+q^{-s})e_0X \\
	&= \left( \sum_{n=0}^{\infty} \pi_s(a(\varpi^{-n})).e_0 X^n \right) \left( 1-q^{-1/2}(q^s+q^{-s})X + q^{-1}X^2 \right) \\
	&\quad - e_0 - \left( e_1' - q^{-1} \frac{q^s + q^{-s}}{q^{1/2} + q^{-1/2}} e_0 \right) X.
\end{align*}
	Reverting it, we obtain
\begin{align*}
	\sum_{n=0}^{\infty} \pi_s(a(\varpi^{-n})).e_0 X^n &= \left( \sum_{n=0}^{\infty} q^{-n/2} \frac{q^{(n+1)s} - q^{-(n+1)s}}{q^s - q^{-s}} X^n \right) \\
	&\quad \cdot \left( e_0 + \left( e_1' - q^{-1} \frac{q^s + q^{-s}}{q^{1/2} + q^{-1/2}} e_0 \right) X + \sum_{n=2}^{\infty} e_n' X^n \right)
\end{align*}
	and conclude by inserting Lemma \ref{BCRepN}. (3) follows from (2) by noting
	$$ \pi_s(a(\varpi^n)).e_0 = \pi_s(wa(\varpi^{-n})w^{-1}) \pi_s(\begin{pmatrix} \varpi^n & \\ & \varpi^n \end{pmatrix}).e_0 = w.\pi_s(a(\varpi^{-n})).e_0. $$
\end{proof}
\begin{corollary}
	If $\IntwR(s): \pi_s \to \pi_{-s}$ is the normalized intertwining operator, sending $e_0$ to $e_0$, then $\IntwR(s)$ acts on $H_n$ by multiplication by
	$$ \mu(n;s) = \mu_{\vp}(n,s) = q^{-2ns} \frac{1-q^{-(1-2s)}}{1-q^{-(1+2s)}}. $$
\label{IntwRCal}
\end{corollary}
\begin{proof}
	This is a special case of the computation in \cite[\S 3.4.3]{Wu5}. Here is another proof. $\IntwR(s) e_n$ is equal to
\begin{align*}
	&\quad \sqrt{\frac{q+1}{q-1}} \frac{q^{-ns}}{1-q^{-1-2s}} \left( \pi_{-s}(a(\varpi^{-n})).e_0 - q^{-1/2} (q^s+q^{-s}) \pi_{-s}(a(\varpi^{-n+1})).e_0 + q^{-1} \pi_{-s}(a(\varpi^{-n+2})).e_0 \right) \\
	&= q^{-2ns} \frac{1-q^{-(1-2s)}}{1-q^{-(1+2s)}} \cdot \\
	&\quad \sqrt{\frac{q+1}{q-1}} \frac{q^{ns}}{1-q^{-1+2s}} \left( \pi_{-s}(a(\varpi^{-n})).e_0 - q^{-1/2} (q^s+q^{-s}) \pi_{-s}(a(\varpi^{-n+1})).e_0 + q^{-1} \pi_{-s}(a(\varpi^{-n+2})).e_0 \right),
\end{align*}
	the last line being equal to $e_n$ since it is independent of $s$.
\end{proof}
\begin{remark}
	For $\F$ archimedean, we have similar computations already available in \cite[Proposition 2.6.3]{Bu98} and \cite[\S 2.7]{Wu14}. We recall them without proof.
\begin{itemize}
	\item $\F=\ag{R}$. $H_n$ is the subspace of vectors $v$ such that
	$$ \begin{pmatrix} \cos \alpha & \sin \alpha \\ -\sin \alpha & \cos \alpha \end{pmatrix}.v = e^{in\alpha} v $$
	and $H_n \neq \{ 0 \}$ only if $2 \mid n \in \ag{Z}$. We have
	$$ \mu(n;s) = \mu_v(n,s) = \sideset{}{_{2\mid k = 0}^{\norm[n]-2}} \prod \frac{k+1-2s}{k+1+2s}. $$
	\item $\F=\ag{R}$. $H_n$ is the subspace on which $\SU_2(\ag{C})$ acts as the unitary irreducible representation of dimension $n+1$ and $H_n \neq \{ 0 \}$ only if $2 \mid n \in \ag{N}$. We have
	$$ \mu(n;s) = \mu_v(n,s) = \sideset{}{_{2\mid k = 1}^{n/2}} \prod \frac{k-2s}{k+2s}. $$
\end{itemize}
	We write $e_0 \in H_0$ for the spherical function taking value $1$ on $\gp{K} = \SO_2(\ag{R})$ or $\SU_2(\ag{C})$.
\label{IntwRCalA}
\end{remark}
\begin{definition}
	For $n,l \in \ag{Z}$, we write $l \preceq n$ to mean either $0 \leq l \leq n$ or $n \leq l \leq 0$. We extend the definition of $e_n$ resp. $\mu(n;s)$ resp. $c(n,l;s)$ for $n,l \in \ag{N}$ to $n,l \in \ag{Z}, l \preceq n$ by requiring
	$$ e_{-n} := w.e_n, \quad \mu(-n;s) = \mu(n;s), \quad c(-n,-l;s) = c(n,l;s). $$
\label{IneqExt}
\end{definition}

\section{Local Estimations}

	\subsection{Non Archimedean Places for Exceptional Part}
	
	We work on a non archimedean place $\vp$ and omit the subscript $\vp$ for simplicity of notations. Recall $e_n$ defined in Lemma \ref{BCRepN} and Definition \ref{IneqExt}, but change $s$ to $s_0$. To emphasize the dependence on $s_0$, we write $e_{n,s_0} \in \pi_{s_0}$ for the flat section associated with $e_n$, and $W_n(s_0, \cdot)$ the associated Kirillov function in the Kirillov model $\Kir(\pi_{s_0}, \psi)$ of $\pi_{s_0}$, with respect to an unramified additive character $\psi$ of $\F$. Recall the local zeta functional
	$$ \zeta(s, W) := \int_{\F^{\times}} W(y) \norm[y]^{s-1/2} d^{\times}y, \quad W \in \Kir(\pi_{s_0}, \psi). $$
\begin{lemma}
	The ratios of zeta-functions
	$$ \zeta_{\vp,n}(s,s_0) = \zeta_n(s,s_0) := \frac{\zeta(1/2+s, W_n(s_0,\cdot))}{\zeta(1/2+s, W_0(s_0, \cdot))}, \quad n \in \ag{Z} $$
	are determined by
\begin{itemize}
	\item $\zeta_0(s,s_0) = 1$ and
	$$ \zeta_1(s,s_0) = \frac{q^{1/2} + q^{-1/2}}{q^{s_0+1/2} - q^{-(s_0+1/2)}} q^{-s} - \frac{q^{s_0} + q^{-s_0}}{q^{s_0+1/2} - q^{-(s_0+1/2)}}; $$
	\item if $n \geq 2$, then
	$$ \zeta_n(s,s_0) = \sqrt{\frac{q+1}{q-1}} \frac{q^{-ns_0}}{1-q^{-1-2s_0}} \left( q^{-ns} - q^{-1/2}(q^{s_0} + q^{-s_0}) q^{-(n-1)s} + q^{-1-(n-2)s} \right); $$
	\item if $n < 0$, we have $\zeta_n(s,s_0) = \zeta_{-n}(-s,s_0)$.
\end{itemize}
\label{LocZetaRatio}
\end{lemma}
\begin{proof}
	From the relation of zeta functions
	$$ \zeta(s, a(\varpi^n).W) = \norm[\varpi]^{-ns} \zeta(s,W) = q^{ns} \zeta(s,W), $$
	the desired formulas are simple consequences of those in Lemma \ref{BaseCNA} (1) and (3).
\end{proof}
\begin{corollary}
	Assume $\Re s = \epsilon > 0$ small, $\norm[n]$ is bounded by a constant and $0 \leq k \leq 2$, then we have
	$$ \extnorm{ \frac{\partial^k}{\partial s_0^k} \zeta_n(s,1/2) } \ll_{\epsilon} q^{n\epsilon - \norm[n]/2} (\log q)^k. $$
\label{LocEstNAEx}
\end{corollary}

	\subsection{Non Archimedean Places for $\intL^4$-Norms}
	
	Using the notations of the previous subsection, we define the Rankin-Selberg local zeta ratios for $n_1,n_2,n \in \ag{Z}$ and $s_1,s_2,s \in \ag{C}$
\begin{equation}
	\zeta_{\vp}( \begin{array}{cc|c} n_1 & n_2 & n \\ \overline{s_1} & s_2 & s \end{array} ) = \zeta( \begin{array}{cc|c} n_1 & n_2 & n \\ \overline{s_1} & s_2 & s \end{array} ) := \frac{ \int_{\gp{N}(\F) \backslash \PGL_2(\F)} \overline{W_{n_1}(s_1,g)} W_{n_2}(s_2,g) e_{n,s}(g) dg }{ \int_{\gp{N}(\F) \backslash \PGL_2(\F)} \overline{W_0(s_1,g)} W_0(s_2,g) e_{0,s}(g) dg }.
\label{RSLocZetaRatioDef}
\end{equation}
\begin{lemma}
\begin{itemize}
	\item[(1)] We have
	$$ \zeta( \begin{array}{cc|c} -n_1 & -n_2 & -n \\ \overline{s_1} & s_2 & s \end{array} ) = \zeta( \begin{array}{cc|c} n_1 & n_2 & n \\ \overline{s_1} & s_2 & s \end{array} ). $$
	\item[(2)] Let $n_2=0=s_2$. The ratio is non-vanishing only if $\norm[n_1] = \norm[n]$.
	\item[(3)] Recall the dimension $d_n$ of $H_n$ computed in Lemma \ref{BCRepN}. We have for $n \geq 2$
\begin{align*}
	\zeta( \begin{array}{cc|c} n & 0 & n \\ \overline{0} & 0 & s \end{array} ) &= \sqrt{\frac{q+1}{q-1}} \frac{d_n^{-1/2}}{1-q^{-1}} \cdot \left( q^{-ns} \frac{2+(n-1)(1-q^{-(s+1/2)})}{1+q^{-(s+1/2)}} \right. \\
	&\quad \left. -2q^{-1/2-(n-1)s} \frac{2+(n-2)(1-q^{-(s+1/2)})}{1+q^{-(s+1/2)}} + q^{-1-(n-2)s} \frac{2+(n-3)(1-q^{-(s+1/2)})}{1+q^{-(s+1/2)}} \right),
\end{align*}
	while for $n=1$,
	$$ \zeta( \begin{array}{cc|c} 1 & 0 & 1 \\ \overline{0} & 0 & s \end{array} ) = \frac{q^{1/2}+q^{-1/2}}{q^{1/2}-q^{-1/2}}  \left( \frac{2q^{-s}}{1+q^{-(s+1/2)}} - \frac{2}{q^{1/2}+q^{-1/2}} \right). $$
\end{itemize}
\label{LocNVNAL4_1}
\end{lemma}
\begin{proof}
	(1) is a consequence of the $w$-invariance of the Rankin-Selberg local zeta functional. For (2), we may assume $n \geq 0$ by (1). It suffices to notice that
	$$ \int_{\gp{K}} e_n(\kappa) \overline{W_{n_1}(s_1,g\kappa)} d\kappa $$
	is non-vanishing only if $\norm[n_1] = \norm[n]$, since by (\ref{CVProj}) $\int_{\gp{K}} \overline{e_n(\kappa)} \kappa d\kappa$ is $d_n^{-1/2}$ times the orthogonal projection onto the $e_n$-vector of $H_n$. In particular, we deduce for $n \geq 0$
	$$ \int_{\gp{K}} e_n(\kappa) \overline{W_n(s_1,g\kappa)} d\kappa = d_n^{-1/2} \overline{W_n(s_1,g)}. $$
	Hence for (3), we are reduced to computing
	$$ \int_{\F^{\times}} \overline{W_n(0,a(y))} W_0(0,a(y)) \norm[y]^{s-1/2} d^{\times}y. $$
	By Lemma \ref{BaseCNA} (1), we are again reduced to computing
\begin{align*}
	&\quad \int_{\F^{\times}} \overline{W_0(0,a(y\varpi^{-n}))} W_0(0,a(y)) \norm[y]^{s-1/2} d^{\times}y \\
	&= \sum_{m=0}^{\infty} q^{-m/2}(m+1) \cdot q^{-(n+m)/2}(n+m+1) \cdot q^{-(n+m)(s-1/2)} \\
	&= q^{-ns} \cdot \frac{2+(n-1)(1-q^{-(s+1/2)})}{(1-q^{-(s+1/2)})^3},
\end{align*}
	and conclude from it.
\end{proof}
\begin{corollary}
	For any integer $k \geq 0$, we have
	$$ \extnorm{ \frac{\partial^k}{\partial s^k} \zeta( \begin{array}{cc|c} n & 0 & n \\ \overline{0} & 0 & 1/2 \end{array} ) } \ll_k q^{-\norm[n]} (\log q)^k. $$
\label{LocEstNAL4_1}
\end{corollary}

\section{Proof of Main Result}
	
	The main structure of proof is similar to our former work \cite{Wu14}. We shall only emphasize on the differences and the extra difficulties. We shall not recall the intuition of the method in terms of the equidistribution of certain lines approaching the low lying horocycles, but refer the reader to the first two pages of \cite[\S 3]{Wu14}.

	\subsection{Reduction to Global Period Bound}
	
	The fixed $\GL_2$ automorphic representation $\pi = \pi(1,1)$ is realized as completed Eisenstein series $\eis^*(0,\cdot)$. We imitate the cuspidal case by choosing
	$$ \varphi_0 = \eis^*(0,f_0) \quad \varphi = n(T).\varphi_0, $$
where $f_0$ is the spherical function taking value $1$ on $\gp{K}$ in the induced model of $\pi(1,1)$. Writing the normalized Whittaker functions as
	$$ W_{0,v}^* := \zeta_v(1) W_{0,v} = \zeta_v(1) W_{f_0,v}, $$
	we get by Proposition \ref{EulerZetaEis} an expression of the relevant $L$-function
	$$ L(\frac{1}{2},\chi)^2 = \left[\prod_{v\mid\infty} \int_{\F_v^{\times}} n(T_v).W_{0,v}^*(a(y_v)) d^{\times}y_v \prod_{v<\infty} \frac{\int_{\F_v^{\times}} n(T_v).W_{0,v}^*(a(y_v)) d^{\times}y_v}{L_v(\frac{1}{2},\chi_v)^2} \right]^{-1} \zeta(\frac{1}{2},\chi,\varphi), $$
where the global zeta-integral is defined in Definition \ref{ZetaFunctDef} and reduces in our case to
	$$ \zeta(s,\chi,\varphi) = \int_{\F^{\times} \backslash \ag{A}^{\times}} (\varphi(a(y)) - \varphi_{\grN}(a(y))) \chi(y) \norm[y]_{\ag{A}}^{s-1/2} d^{\times}y, $$
whose value at $s=1/2$ must be interpreted via analytic continuation, unlike the cuspidal case.
\begin{proposition}
	We can choose $T_v$ with $\norm[T_v]_v \in [\Cond(\chi_v), 2 \Cond(\chi_v)]$ such that
	$$ \prod_{v\mid\infty} \int_{\F_v^{\times}} n(T_v).W_{0,v}^*(a(y_v)) d^{\times}y_v \prod_{v<\infty} \frac{\int_{\F_v^{\times}} n(T_v).W_{0,v}^*(a(y_v)) d^{\times}y_v}{L_v(\frac{1}{2},\chi_v)^2} \gg_{\F} Q^{-\frac{1}{2}}, $$
	where $Q=\Cond(\chi)=\Pi_v \Cond(\chi_v)$ is the analytic conductor of $\chi$ (we keep this notation in what follows).
\label{LocMainEst}
\end{proposition}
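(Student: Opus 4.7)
The plan is to reduce the statement to its cuspidal analogue \cite[Proposition 3.1]{Wu14}, whose proof should carry over word-for-word once the local Whittaker function of the Eisenstein vector is correctly identified. The crucial observation is that the spherical Whittaker function $W_{0,v}^*$ attached to $f_0 \in \pi_{1,1}$ has the same normalization $W_{0,v}^*(1)=1$ at every finite unramified place as the spherical Whittaker function in the cuspidal case, and more generally its restriction to the diagonal torus is given by the usual explicit formula. Since the local estimates of \cite[Proposition 3.1]{Wu14} depend only on this torus restriction together with the normalization, they apply in the present Eisenstein setting as well. This is precisely the content of the extension announced in \cite{Wu3}.

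Concretely, the local product is split into three kinds of places. At a finite place $v$ where $\chi_v$ is unramified, so that $\Cond(\chi_v)=1$, the choice $T_v=0$ combined with the unramified computation for $\pi_v=\pi_{1,1}$ gives
$$ \int_{\F_v^\times} W_{0,v}^*(a(y_v)) \chi_v(y_v) d^{\times} y_v = L_v(\tfrac{1}{2},\chi_v)^2, $$
so the corresponding factor in the product is exactly $1$, and such places contribute nothing.

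At a ramified finite place, write $\Cond(\chi_v)=q_v^{n_v}$ with $n_v \geq 1$ and pick $T_v \in \vp_v^{-n_v} \setminus \vp_v^{-n_v+1}$, so that $\norm[T_v]_v \in [\Cond(\chi_v), 2\Cond(\chi_v)]$. The local estimate of \cite[Proposition 3.1]{Wu14} then yields
$$ \extnorm{\int_{\F_v^\times} n(T_v).W_{0,v}^*(a(y_v)) \chi_v(y_v) d^{\times} y_v} \gg_{\F} \Cond(\chi_v)^{-1/2} \norm[L_v(\tfrac{1}{2},\chi_v)]^2, $$
i.e.\ the ratio is $\gg_{\F} \Cond(\chi_v)^{-1/2}$. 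At each archimedean place the analogous archimedean local bound, with $\norm[T_v]_v$ chosen comparable to $\Cond(\chi_v)$, yields $\gg_{\F} \Cond(\chi_v)^{-1/2}$ for the raw archimedean integral. Multiplying the bounds over all $v$ then gives the global lower bound $\gg_{\F} \prod_v \Cond(\chi_v)^{-1/2} = Q^{-1/2}$.

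The main obstacle is not really a step in the present proof but the preliminary task of extending \cite[Proposition 3.1]{Wu14}, originally written for cuspidal Whittaker vectors, to the spherical Eisenstein vector $\eis^*(0,f_0) \in \pi(1,1)$. Since the original local argument only uses the identity $W_{0,v}^*(1)=1$ together with properties of the explicit formula on the torus, and not cuspidality of $\pi$ at any step, no genuinely new idea is required; this extension is the subject of \cite{Wu3}, and once it is in hand the proposition follows immediately by taking products.
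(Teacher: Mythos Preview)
Your proposal is correct and follows essentially the same route as the paper: the paper does not give an independent argument but simply observes that the normalization $W_{0,v}^*(1)=1$ puts us in the exact setting of \cite[Proposition 3.1]{Wu14} (resp.\ its extension in \cite{Wu3}), so that result applies verbatim. Your place-by-place breakdown is precisely the content of that cited proposition, and your remark that the local argument uses only the torus restriction and normalization of the Whittaker function, never cuspidality, is exactly the point the paper is making when it says ``the same normalization as for cusp forms''. One small remark: you (correctly) insert the character $\chi_v(y_v)$ into the local integrals, which is evidently intended but omitted in the displayed statement; without it the unramified factor would not reduce to $1$.
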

\begin{proof}
	This is a special case of \cite[Proposition 2.4]{Wu3} for the ``\emph{Option (B)}''.
\end{proof}

	\subsection{Reduction to Bound of Truncated Integral}

	We are reduced to bounding $\zeta(1/2, \chi, \varphi)$. It can be defined only via analytic continuation. However, we can still approximate it with truncated integral, just as what the classical approximate functional equation does. The outcome is that we essentially only need to estimate an integral of compact domain, which is equivalent to a finite sum in the classical setting. Recall Definition \ref{h-TruncZeta}. We shall apply Proposition \ref{ApproxFE} with $A=Q^{-\kappa-1}, B=Q^{\kappa-1}$ for some $\kappa \in (0,1)$ to be chosen later, with $c_1 = c_2 = 1/2+\epsilon$ and with $h_0$ and $h$ specified there.
\begin{lemma}
	Assume $Q$ is bounded away from $0$. Then we have for any small $\epsilon > 0$
\begin{align*}
	\zeta(\frac{1}{2},\chi,\varphi) &= \zeta(\sigma * h, \chi, \varphi) + O_{\F,h_0,\epsilon}(Q^{-\frac{\kappa}{2}+\epsilon}) \\
	&= \int_{\F^{\times} \backslash \A^{\times}} \sigma*h(\norm[y]_{\A}) \varphi(a(y)) \chi(y) d^{\times}y + O_{\F,h_0,\epsilon}(Q^{-\frac{\kappa}{2}+\epsilon}+Q^{\frac{\kappa-1}{2}+\epsilon}),
\end{align*}
	where $\sigma$ is the following average of Dirac measures
	$$ \sigma = \frac{1}{M_E^2} \sum_{v,v' \in I_E} \delta_{\norm[\varpi_v]_v \norm[\varpi_{v'}]_{v'}^{-1}} $$
	with a parameter $E > 0$ to be chosen later and
	$$ I_E = \{ v<\infty: q_v \in [E,2E], T_v=0 \}, \quad M_E = \norm[I_E] \gg \frac{E}{\log E}. $$
\label{TruncEst}
\end{lemma}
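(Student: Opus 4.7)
The plan is to apply the approximate functional equation of Proposition \ref{ApproxFE} to the specific test function $\varphi = n(T).\varphi_0$, average over the scaling atoms of $\sigma$, and control each of the four error terms. Left-translation by $n(T)$ preserves the constant term along $\grN$, and for $\varphi_0 = \eis^*(0, f_0)$ built from $\pi_{1,1}$ (so $\xi = \omega = 1$, $s_0 = 0$), the simple pole of $\Lambda(1+2s_0,1)$ at $s_0 = 0$ forces $\chi_j = 1$, $\alpha_j = 0$, and $n_j \in \{0,1\}$ in $\varphi_\grN^*$. Hence the exponents of Proposition \ref{ApproxFE} read $s_j = 1/2 + i\mu(\chi)$ and $s_j' = 1/2 + i\mu(\chi^{-1})$, neither of which ever vanishes, so the \emph{degenerate polar part} is automatically zero.

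For each $t$ in the support of $\sigma$, I would apply Proposition \ref{ApproxFE} with rescaled cut-offs $(At, Bt)$. Since $\zeta^{\hol}(1/2, \chi, \varphi)$ is independent of $t$, and by linearity $\frac{1}{M_E}\sum_{v,v'\in I_E} \zeta(h(\cdot/t_{v,v'}), \chi, \varphi) = \zeta(\sigma * h, \chi, \varphi)$, the $\sigma$-average yields $\zeta(1/2,\chi,\varphi) = \zeta(\sigma * h, \chi, \varphi) + \overline{E}$, where $\overline{E}$ is the $\sigma$-averaged sum of normal polar, lower, and upper parts. The normal polar part vanishes unless $\chi\mid_{\A^{(1)}} = 1$, in which case $\chi = \norm[\cdot]_\A^{i\mu}$ and $\Cond(\chi) \asymp (|\mu|+1)^{d_\F}$, whence $\norm[s_j]^{-N} \ll Q^{-N/d_\F}$; choosing $N$ sufficiently large dominates the otherwise disastrous factor $A^{\pm 1/2}\log^{n_j} A \asymp Q^{(\kappa+1)/2+\epsilon}$. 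For the lower and upper parts, I would invoke Proposition \ref{EulerZetaEis} to write $\zeta(1/2+s,\chi,\varphi)$ as $\Lambda(1/2+s,\chi)^2/\Lambda(1,1)$ times a finite product of local factors built from $\int n(T_v).W_{0,v}^*(a(y_v))\norm[y_v]_v^{s-1/2} d^\times y_v$. On vertical lines $\Re s = c_j > 1/2$, the oscillation $\psi_v(y_v T_v)$ at ramified places (where $\norm[T_v]_v \asymp \Cond(\chi_v)$) produces enough decay in $Q$ that, combined with the weights $A^{c_1}$ resp.\ $B^{-c_2}$ and the factor $\prod \norm[s+m]^{-N}$, both contributions are $O(Q^{-\kappa/2+\epsilon})$ after a suitable choice of $c_1, c_2$.

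For the second equality, the difference
\[
\zeta(\sigma * h, \chi, \varphi) - \int_{\F^\times\backslash\A^\times} \sigma * h(\norm[y]_\A)\varphi(a(y))\chi(y) d^\times y = -\int_{\F^\times\backslash\A^\times} \sigma * h(\norm[y]_\A) \varphi_\grN^*(a(y))\chi(y) d^\times y
\]
vanishes unless $\chi\mid_{\A^{(1)}} = 1$; in the latter case, the explicit form $\varphi_\grN^*(a(y)) \sim \norm[y]_\A^{1/2}\log^{n_j}\norm[y]_\A$ reduces the right-hand side to $\Mellin{\sigma * h}(1/2+i\mu)$ and its logarithmic derivatives in $\mu$. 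Since $\sigma * h$ is supported in roughly $[A/2, 4B]$, one bounds $\Mellin{\sigma * h}(1/2+i\mu) \ll B^{1/2}\log^{n_j+1} B \ll Q^{(\kappa-1)/2+\epsilon}$, accounting exactly for the second error term in the statement.

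The main obstacle will be the upper-part bound: the bare factor $B^{-c_2} = Q^{(1-\kappa)c_2}$ grows with $Q$, and the estimate survives only because of the oscillatory decay of the local Whittaker integrals $\int\psi_v(y_v T_v)W_{0,v}(a(y_v))\norm[y_v]_v^{s-1/2} d^\times y_v$ at ramified finite places. Quantifying this decay precisely, together with Stirling bounds on the archimedean $\Gamma$-factors inside $\Lambda(1/2+s,\chi)^2$ and convergence of the Euler product for $\Re s > 1/2$, is the technical heart of the argument; it is also the point at which replacing the cuspidal $\pi$ of \cite{Wu14} by the Eisenstein $\pi(1,1)$ could have caused the method to fail, which is precisely what the regularization framework imported from \cite{Wu6} is designed to prevent.
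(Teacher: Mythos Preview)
Your plan matches the paper's proof: apply Proposition \ref{ApproxFE}, observe that the degenerate polar part vanishes (since $s_j=s_j'=\tfrac12+i\mu\ne 0$), kill the normal polar part via the denominator $|1/2+i\mu|^N$, bound the lower and upper parts on $\Re s=\tfrac12+\epsilon$ through the Euler product of Proposition \ref{EulerZetaEis} and local oscillatory estimates, and finally handle the constant-term integral via its support exactly as you describe. One clarification: the paper singles out the \emph{lower} part (which involves $w.\varphi$, not $\varphi$, and at archimedean $v$ carries a growing prefactor $(1+T_v^2)^s$ together with a non-compactly-supported Eisenstein Whittaker function) as the place requiring new input versus the cuspidal case---handled by an extension of \cite[Corollary~4.3]{Wu14}---whereas the upper part is in fact simpler, and the regularization machinery of \cite{Wu6} plays no special role in this particular lemma.
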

\begin{proof}
	We only need to consider the case for $h$ since $\sigma*h$ gives bounded translations. Compared to \cite[Lemma 3.2]{Wu14}, the new situation is:
\begin{itemize}
	\item The normal polar part is non-vanishing;
	\item At $v \mid \infty$, $W_{0,v}^*$ is no longer of compact support in $\F_v^{\times}$, hence \cite[Corollary 4.3]{Wu14} used in \cite[Section 6.1]{Wu14} for local archimedean bounds on the vertical line $\Re s = -1/2-\epsilon$ need to be re-considered;
	\item There is a new passage from the first line to the second line, for which the estimation of an integral against the constant term $\varphi_{\gp{N}}$ need to be done.
\end{itemize}
	We proceed to bound each part appearing in Proposition \ref{ApproxFE} one by one.
	
\noindent (0) The degenerate polar part is vanishing.
	
\noindent (1) The normal polar part and the integral against $\varphi_{\grN}$ are non-vanishing only if $\chi=\norm^{i\mu}$ for some $\mu \in \R$, in which case $\Cond(\chi_v)=1$ for all $v<\infty$ and $\Cond(\chi_v) \asymp \norm[\mu]^{[\F_v:\R]}$ for all $v \mid \infty$. Hence $Q \asymp \norm[\mu]^{[\F:\Q]}$. Note that by \cite[Proposition 7.33]{Wu9}, there are $\mu_1,\mu_2 \in \C$ depending only on $\F$ such that
\begin{equation}
	\varphi_{0,\grN}(zn(x)a(y)\kappa) = \mu_1 \norm[y]_{\A}^{\frac{1}{2}} + \mu_2 \norm[y]_{\A}^{\frac{1}{2}} \log \norm[y]_{\A}, \quad \forall z \in \grZ(\A), x \in \A, y \in \A^{\times}, \kappa \in \grK.
\label{TestCst}
\end{equation}
	If we write
	$$ \varphi_{\grN}(zn(x)a(y)\kappa) = \norm[y]_{\A}^{\frac{1}{2}} f_1(\kappa) + \norm[y]_{\A}^{\frac{1}{2}} \log \norm[y]_{\A} f_2(\kappa), $$
	then we can easily calculate
	$$ f_1(1) = \mu_1, \quad f_2(1) = \mu_2, \quad f_1(w) = \mu_1 \sideset{}{_{v \mid \infty}} \prod (1+\norm[T_v]^2)^{-\frac{[\F_v:\R]}{2}}, $$
	$$ f_2(w) = \mu_2 \sideset{}{_{v \mid \infty}} \prod (1+\norm[T_v]^2)^{-\frac{[\F_v:\R]}{2}} \log \left( \sideset{}{_{v \mid \infty}} \prod (1+\norm[T_v]^2)^{-\frac{[\F_v:\R]}{2}} \right). $$
	We thus find that the normal polar part, which is of the form
	$$ O\left( \frac{A^{1/2} \norm[f_1(1)]}{\norm[1/2+i\mu]^N} + \frac{A^{1/2} \norm[f_2(1) \log A]}{\norm[1/2+i\mu]^N} \right) + O\left( \frac{A^{-1/2} \norm[f_1(w)]}{\norm[1/2-i\mu]^N} + \frac{A^{-1/2} \norm[f_2(w) \log A]}{\norm[1/2-i\mu]^N} \right) $$
	can be bounded as $O(Q^{-N})$ for any $N \in \N$, due to the arbitrarily large denominators. For the integral against $\varphi_{\grN}$, since $h(t)$ has support contained in $[Q^{-\kappa-1},Q^{\kappa-1}]$ with $\norm[h(t)] \leq 1$, we find that (note that $n(T).\varphi_{\gp{N}}(a(y)) = \varphi_{\gp{N}}(a(y))$)
\begin{equation}
	\int_{\F^{\times} \backslash \A^{\times}} h(\norm[y]_{\A}) \varphi_{\grN}(a(y) \chi(y) d^{\times}y = O_{\F,h_0,\epsilon}(Q^{\frac{\kappa-1}{2}+\epsilon}).
\label{InthEst}
\end{equation}
	
\noindent (2) We turn to the lower part. Recall the choice $c_1=1/2+\epsilon$. The relevant zeta-function has a decomposition as a finite product
\begin{align*}
	\zeta(\frac{1}{2}+s,\chi^{-1},w.\varphi) &= L(\frac{1}{2}+s,\chi^{-1})^2 \cdot \prod_{v \mid \infty} \int_{\F_v^{\times}} wn(T_v).W_{0,v}^*(a(y_v)) \chi_v^{-1}(y_v) \norm[y_v]_v^{s} d^{\times} y_v \\
	&\quad \cdot \prod_{v < \infty} \frac{\int_{\F_v^{\times}} wn(T_v).W_{0,v}^*(a(y_v)) \chi_v^{-1}(y_v) \norm[y_v]_v^{s} d^{\times} y_v}{L_v(\frac{1}{2}+s,\chi_v^{-1})^2}.
\end{align*}
	At an archimedean place $v$, say $\F_v = \R$, to the local integral
\begin{align*}
	&\quad \int_{\R^{\times}} wn(T_v).W_{0,v}^*(a(y_v)) \chi_v^{-1}(y_v) \norm[y_v]_v^{s} d^{\times} y_v \\
	&= \chi_v(1+T_v^2)^{-1} (1+T_v^2)^{s} \int_{\R^{\times}} W_{0,v}^*(a(y)) \psi_v(yT_v) \chi_v^{-1}(y_v) \norm[y_v]_v^{s} d^{\times}y_v
\end{align*}
	\cite[Lemma 3.12 (2)]{Wu3} gives a bound as $O(\norm[T_v]_v^{1/2+\epsilon})$. For $\F_v=\C$ the argument is similar, using \cite[Lemma 3.13 (2)]{Wu3}. At $v<\infty$, \cite[Corollary 4.8]{Wu14} is still applicable. We thus deduce that, using the convex bound of $L(1/2+s,\chi^{-1})$,
	$$ \extnorm{\zeta(\frac{1}{2}+s,\chi^{-1},w.\varphi)} \ll_{\F,\epsilon} \norm[\frac{1}{2}+s]^{\epsilon} \Cond(\chi)^{\frac{1}{2}+\epsilon}. $$
	The desired bound is thus $O(Q^{-\frac{\kappa +1}{2}}Q^{\frac{1}{2}+\epsilon}) = O(Q^{-\frac{\kappa}{2}+\epsilon})$.
	
\noindent (3) The treatment of the upper part is similar and simpler. It gives the desired bound $O(Q^{-\frac{\kappa }{2}+\epsilon})$.
\end{proof}

	\subsection{Interlude: Failure of Truncation on Eisenstein series}
	\label{FailTE}
	
	We have approximated $\zeta(1/2,\chi,\varphi)$ by some smoothly truncated integral
	$$ \int_{\F^{\times} \backslash \ag{A}^{\times}}^h \varphi(a(y)) \chi(y) d^{\times}y \quad \left( \text{where } \int_{\F^{\times} \backslash \ag{A}^{\times}}^h := \int_{\F^{\times} \backslash \ag{A}^{\times}} h(\norm) \right) $$
as in the cuspidal case. We then would like to apply the Cauchy-Schwarz inequality
	$$ \extnorm{\int_{\F^{\times} \backslash \ag{A}^{\times}}^h \varphi(a(y)) \chi(y) d^{\times}y}^2 \leq \int_{\F^{\times} \backslash \ag{A}^{\times}}^h d^{\times}y \cdot \int_{\F^{\times} \backslash \ag{A}^{\times}}^h n(T).\extnorm{\varphi_0(a(y))}^2 d^{\times}y, $$
and apply Fourier inversion to $\norm[\varphi_0]^2$, interchange the order of summation and estimate each component as in the cuspidal case. This is not possible because $\varphi_0$ is no longer of rapid decay hence $\norm[\varphi_0]^2$ is not square integrable any more. A first idea, which was already employed in \cite[\S 5.1.7]{MV10}, is to (smoothly) truncate the Eisenstein series $\varphi_0$ up to some height $X$, denoted by $\Lambda^X \varphi_0$. For example, if we naively choose $X$ no less than the height of the truncation on the integral (namely, $Q^{\kappa +1}$ in the notation of \cite{Wu14}), we find
	$$ \int_{\F^{\times} \backslash \ag{A}^{\times}}^h \varphi(a(y)) \chi(y) d^{\times}y = \int_{\F^{\times} \backslash \ag{A}^{\times}}^h n(T). \Lambda^X \varphi_0(a(y)) \chi(y) d^{\times}y. $$
We could continue the argument by replacing $\varphi_0$ with $\Lambda^X \varphi_0$. But then some $\intL^2$-Sobolev norm of $\extnorm{\Lambda^X \varphi_0}^2$ come in as a multiplicative factor of the final estimation of the above integral. This causes no problem in the cuspidal case since the relevant norm is bounded by some $\intL^4$-Sobolev norms of $\varphi_0$, which depends only on $\pi$. This is no longer the case for $\Lambda^X \varphi_0$ since its norms all depend on $X$, hence some positive power of $Q=\Cond(\chi)$. This means that in the final optimization just before \cite[Remark 3.11]{Wu14}, we would have to replace $EQ^{-1/4+\theta/2}$ by something like $XEQ^{-1/4+\theta/2}$, which completely destroys the Burgess-like quality. Indeed in the thesis version of \cite{Wu14} we have pursued this idea and were only able to obtain a saving $(1-2\theta)/12$ instead of the Burgess-like saving $(1-2\theta)/8$.

\noindent A better way, which is also the main innovation of this paper, is to generalize the spectral decomposition/Fourier inversion into a space of functions suitably larger than the square-integrable ones. As we have seen in \S \ref{RTPF}, $\Aut^{\freg}(\GL_2,1)$ is a good candidate: it contains $\norm[\varphi_0]^2$ and differs from smooth vectors in the $\intL^2$-space only by (non-unitary) Eisenstein series. Precisely, we shall decompose
	$$ \int_{\F^{\times} \backslash \ag{A}^{\times}}^h n(T).\extnorm{\varphi_0(a(y))}^2 d^{\times}y = \int_{\F^{\times} \backslash \ag{A}^{\times}}^h n(T).\left( \extnorm{\varphi_0}^2 - \Reis \right)(a(y)) d^{\times}y + \int_{\F^{\times} \backslash \ag{A}^{\times}}^h n(T).\Reis(a(y)) d^{\times}y, $$
where we have written $\Reis = \Reis(\extnorm{\varphi_0}^2)$ for simplicity. Without amplification, the norms of $\extnorm{\varphi_0}^2 - \Reis$ depend only on $\varphi_0$ hence $\pi$; with amplification the relevant norms have contributions as small as $(\log E)^3$ (see Theorem \ref{L4Bd}) where $E$ denotes the length of the amplifiers, which is negligible. Hence we can treat the term related with $\extnorm{\varphi_0}^2 - \Reis$ in the same way as in the cuspidal case without harming the quality of the bound. Since $\Reis$ is determined explicitly by $\varphi_0$, the term related with it is explicitly estimable. We will treat the estimation and see that its contribution does not harm the quality of the final bound, neither. Namely, the generalized spectral decomposition fits as well with the estimation of the integrals as the ordinary one in the cuspidal case. Note that the simpler $\varphi_0$ is, the simpler $\Reis$ is.

	\subsection{Regroupment of Generalized Fourier Inversion}

	We make the strategy described in the above remark more precise. Introducing
	$$ \sigma_{\chi}' = \frac{1}{M_E^2} \sum_{\vp,\vp' \in I_E} \chi \left( \frac{\varpi_{\vp}}{\varpi_{\vp'}} \right) \delta_{\varpi_{\vp} \varpi_{\vp'}^{-1}}, $$
	we can apply the Cauchy-Schwarz inequality
\begin{align*}
	\extnorm{\int_{\F^{\times} \backslash \A^{\times}} \sigma*h(\norm[y]_{\A}) \varphi(a(y)) \chi(y) d^{\times}y}^2 &= \extnorm{\int_{\F^{\times} \backslash \A^{\times}} h(\norm[y]_{\A}) \sigma_{\chi}'*\varphi(a(y)) \chi(y) d^{\times}y}^2 \\
	&\leq \int_{\F^{\times} \backslash \A^{\times}} h(\norm[y]_{\A}) d^{\times}y \cdot \int_{\F^{\times} \backslash \A^{\times}} h(\norm[y]_{\A}) \norm[\sigma_{\chi}' * \varphi(a(y))]^2 d^{\times}y.
\end{align*}
	The first integral in the last line is of size $O_{\F}(\log Q)$, hence negligible. Opening the square, we get
\begin{align*}
	\norm[\sigma_{\chi}' * \varphi(a(y))]^2 &= \frac{1}{M_E^4} \sum_{\vp_1,\vp_1',\vp_2,\vp_2' \in I_E} \chi \left( \frac{\varpi_{\vp_1}}{\varpi_{\vp_1'}} \right) \chi^{-1} \left( \frac{\varpi_{\vp_2}}{\varpi_{\vp_2'}} \right) \left( a\left( \frac{\varpi_{\vp_1}}{\varpi_{\vp_1'}} \right).\varphi \cdot \overline{a\left( \frac{\varpi_{\vp_2}}{\varpi_{\vp_2'}} \right).\varphi} \right)(a(y)) \\
	&= \frac{1}{M_E^4} \sum_{\vec{\vp} \in I_E^4} \chi_{\vec{\vp}} a\left( \frac{\varpi_{\vp_2}}{\varpi_{\vp_2'}} \right)n(T). \left( a\left( \frac{\varpi_{\vp_1}\varpi_{\vp_2'}}{\varpi_{\vp_1'}\varpi_{\vp_2}} \right).\varphi_0 \cdot \overline{\varphi_0} \right)(a(y)),
\end{align*}
	where we have abbreviated
	$$ \chi_{\vec{\vp}} := \chi \left( \frac{\varpi_{\vp_1}}{\varpi_{\vp_1'}} \right) \chi^{-1} \left( \frac{\varpi_{\vp_2}}{\varpi_{\vp_2'}} \right) \in \ag{C}^{(1)}. $$
	Decomposing the non square-integrable function as
\begin{equation}
	a\left( \frac{\varpi_{\vp_1}\varpi_{\vp_2'}}{\varpi_{\vp_1'}\varpi_{\vp_2}} \right).\varphi_0 \cdot \overline{\varphi_0} = a\left( \frac{\varpi_{\vp_2'}}{\varpi_{\vp_2}} \right).\varphi_0(\vec{\vp}) + \Reis_0(\vec{\vp}),
\label{RegPhi0SqDef}
\end{equation}
	where the $\intL^2$-residual part (\ref{ReisDef}) or \cite[Definition 5.26]{Wu9} is given an abbreviated notation
\begin{equation}
	\Reis_0(\vec{\vp}) := \Reis \left( a\left( \frac{\varpi_{\vp_1}\varpi_{\vp_2'}}{\varpi_{\vp_1'}\varpi_{\vp_2}} \right).\varphi_0 \cdot \overline{\varphi_0} \right),
\label{ResPhi0SqDef}
\end{equation}
	applying to $\varphi_0(\vec{\vp}) = \varphi_0(\vec{\vp})_{\grN} + \varphi_0(\vec{\vp})_{\mathrm{cusp}} + \varphi_0(\vec{\vp})_{\mathrm{Eis}}$ the Fourier inversion decomposition in the sense of \cite[Theorem 2.18]{Wu14} and regrouping the two constant terms, we can rewrite the second integral as
\begin{align*}
	&\quad \int_{\F^{\times} \backslash \A^{\times}} h(\norm[y]_{\A}) \norm[\sigma_{\chi}' * \varphi(a(y))]^2 d^{\times}y \\
	&= \frac{1}{M_E^4} \sum_{\vec{\vp} \in I_E^4} \chi_{\vec{\vp}} \int_{\F^{\times} \backslash \A^{\times}} h(\norm[y]_{\A}) \left( a\left( \frac{\varpi_{\vp_1}}{\varpi_{\vp_1'}} \right).\varphi_0 \cdot \overline{a\left( \frac{\varpi_{\vp_2}}{\varpi_{\vp_2'}} \right).\varphi_0} \right)_{\grN} (a(y)) d^{\times}y \\
	&+ \frac{1}{M_E^4} \sum_{\vec{\vp} \in I_E^4} \chi_{\vec{\vp}} \zeta(h,1, n(T).\varphi_0(\vec{\vp})_{\mathrm{cusp}} ) + \frac{1}{M_E^4} \sum_{\vec{\vp} \in I_E^4} \chi_{\vec{\vp}} \zeta(h,1, n(T).\varphi_0(\vec{\vp})_{\mathrm{Eis}} ) \\
	&+ \frac{1}{M_E^4} \sum_{\vec{\vp} \in I_E^4} \chi_{\vec{\vp}} \zeta(h_{\extnorm{ \varpi_{\vp_2'} / \varpi_{\vp_2} }_{\ag{A}}},1, n(T).\Reis_0(\vec{\vp})),
\end{align*}
	where we have used the $h$-truncated zeta-integral in Definition \ref{h-TruncZeta}. Note that we can drop $\varpi_{\vp_2'} / \varpi_{\vp_2}$ in the second integrand, since its adelic norm is contained in $[1/2,2]$.

	\subsection{Bounds for Each Part}
	
\begin{lemma}
	We have for any $\epsilon >0$
	$$ \extnorm{\frac{1}{M_E^4} \sum_{\vec{\vp} \in I_E^4} \chi_{\vec{\vp}} \int_{\F^{\times} \backslash \A^{\times}} h(\norm[y]_{\A}) \left( a\left( \frac{\varpi_{\vp_1}}{\varpi_{\vp_1'}} \right).\varphi_0 \cdot \overline{a\left( \frac{\varpi_{\vp_2}}{\varpi_{\vp_2'}} \right).\varphi_0} \right)_{\grN} (a(y)) d^{\times}y} \ll_{\F,\epsilon} E^{-2+\epsilon} Q^{\epsilon} + Q^{\kappa-1+\epsilon}. $$
\end{lemma}
\begin{proof}
	We write and decompose
	$$ S_{\grN}(\vec{\vp};h) := \int_{\F^{\times} \backslash \A^{\times}} h(\norm[y]_{\A}) \left( a\left( \frac{\varpi_{\vp_1}}{\varpi_{\vp_1'}} \right).\varphi_0 \cdot \overline{a\left( \frac{\varpi_{\vp_2}}{\varpi_{\vp_2'}} \right).\varphi_0} \right)_{\grN} (a(y)) d^{\times}y, $$
	$$ S_{\grN}(\vec{\vp};h) = S_{\grN}^{\Whi}(\vec{\vp};h) + S_{\grN}^*(\vec{\vp};h), $$
	$$ S_{\grN}^*(\vec{\vp};h) = \int_{\F^{\times} \backslash \A^{\times}} h(\norm[y]_{\A}) a\left( \frac{\varpi_{\vp_1}}{\varpi_{\vp_1'}} \right).\varphi_{\grN}(a(y)) \cdot \overline{a\left( \frac{\varpi_{\vp_2}}{\varpi_{\vp_2'}} \right).\varphi_{\grN}(a(y))} d^{\times}y. $$
	The treatment of
	$$ \frac{1}{M_E^4} \sum_{\vec{\vp} \in I_E^4} \chi_{\vec{\vp}} S_{\grN}^{\Whi}(\vec{\vp};h) $$
	is the same as \cite[Lemma 3.4]{Wu14}, which gives a term $\ll_{\F,\epsilon} E^{-2+\epsilon} Q^{\epsilon}$. Using (\ref{TestCst}), we find
\begin{align*}
	S_{\grN}^*(\vec{\vp};h) &= \left(\norm[\mu_1]^2 + \mu_1\overline{\mu_2} \log \extnorm{\frac{\varpi_{\vp_2}}{\varpi_{\vp_2'}}}_{\A} + \mu_2\overline{\mu_1} \log \extnorm{\frac{\varpi_{\vp_1}}{\varpi_{\vp_1'}}}_{\A} \right) \extnorm{\frac{\varpi_{\vp_1}}{\varpi_{\vp_1'}}}_{\A}^{\frac{1}{2}} \extnorm{\frac{\varpi_{\vp_2}}{\varpi_{\vp_2'}}}_{\A}^{\frac{1}{2}} \\
	&\quad \cdot \int_{\F^{\times} \backslash \A^{\times}} h(\norm[y]_{\A}) \norm[y]_{\A} d^{\times}y \\
	&+ \left(\mu_1\overline{\mu_2} + \mu_2\overline{\mu_1} \right) \extnorm{\frac{\varpi_{\vp_1}}{\varpi_{\vp_1'}}}_{\A}^{\frac{1}{2}} \extnorm{\frac{\varpi_{\vp_2}}{\varpi_{\vp_2'}}}_{\A}^{\frac{1}{2}} \cdot \int_{\F^{\times} \backslash \A^{\times}} h(\norm[y]_{\A}) \norm[y]_{\A} \log \norm[y]_{\A} d^{\times}y \\
	&+ \norm[\mu_2]^2 \extnorm{\frac{\varpi_{\vp_1}}{\varpi_{\vp_1'}}}_{\A}^{\frac{1}{2}} \extnorm{\frac{\varpi_{\vp_2}}{\varpi_{\vp_2'}}}_{\A}^{\frac{1}{2}} \cdot \int_{\F^{\times} \backslash \A^{\times}} h(\norm[y]_{\A}) \norm[y]_{\A} \log^2 \norm[y]_{\A} d^{\times}y,
\end{align*}
	from which we easily see, by the same consideration of (\ref{InthEst}),
	$$ \extnorm{S_{\grN}^*(\vec{\vp};h)} \ll_{\F,\epsilon} Q^{\kappa-1+\epsilon}; \quad \extnorm{\frac{1}{M_E^4} \sum_{\vec{\vp} \in I_E^4} \chi_{\vec{\vp}} S_{\grN}^*(\vec{\vp};h)} \ll_{\F,\epsilon} Q^{\kappa-1+\epsilon}. $$
\end{proof}
\begin{remark}
	There are nine different patterns of the positions of $\vp_1,\vp_1',\vp_2,\vp_2'$, listed in \cite[Proposition 3.5]{Wu14}. The estimation of the rest three terms depends on the pattern. For simplicity, we only treat the \textit{typical} pattern in detail in what follows, i.e., when $\vp_1,\vp_1',\vp_2,\vp_2'$ are distinct. The treatment of the other patterns is quite similar.
\end{remark}
\begin{lemma}
	For any $\epsilon > 0$ we have
	$$ \extnorm{\frac{1}{M_E^4} \sum_{\vec{\vp} \in I_E^4} \chi_{\vp} \zeta(h,1, n(T).\Reis_0(\vec{\vp}))} \ll_{\F,h_0,\epsilon} (EQ)^{\epsilon}(Q^{\kappa-1} E^2 + E^{-2}). $$
\end{lemma}
\begin{proof}
	This follows from Corollary \ref{ReisZetaEst} and the omitted calculation for other patterns. The situation is quite similar to that of \cite[Section 6.2 - 6.4]{Wu14}.
\end{proof}

	The estimation of $\frac{1}{M_E^4} \sum_{\vec{\vp} \in I_E^4} \chi_{\vec{\vp}} \zeta(h,1, n(T).\varphi_0(\vec{\vp})_{\mathrm{cusp}} )$ and $\frac{1}{M_E^4} \sum_{\vec{\vp} \in I_E^4} \chi_{\vec{\vp}} \zeta(h,1, n(T).\varphi_0(\vec{\vp})_{\mathrm{Eis}} )$ is essentially the same as the cuspidal case given in \cite[Section 6.3 \& 6.4]{Wu14}. In fact, the only difference appears in \cite[(6.16)]{Wu14}, where we could bound $\Norm[\Delta_{\infty}^{A'}a\left( \frac{\varpi_{v_1}}{\varpi_{v_1'}} \right).\varphi_0 \cdot \overline{a\left( \frac{\varpi_{v_2}}{\varpi_{v_2'}} \right).\varphi_0}]$ easily by $\intL^4$-norm of $\varphi_0$. For the current case, we need to bound $\Norm[\Delta_{\infty}^{A'}\varphi_0(v_1,v_1',v_2,v_2')]$ defined in (\ref{RegPhi0SqDef}). Decomposing the relevant function into $\grK_{\infty}$-isotypic parts, we can apply Theorem \ref{L4Bd}. Thus unlike the cuspidal case, we get an extra $(\log E)^3$ into our estimation, which is harmless. Hence \cite[Lemma 3.6 \& 3.7]{Wu14} remain valid in the current case, giving
\begin{lemma}
	For any $\epsilon > 0$ we have
	$$ \extnorm{\frac{1}{M_E^4} \sum_{\vec{\vp} \in I_E^4} \chi_{\vec{\vp}} \zeta(h,1, n(T).\varphi_0(\vec{\vp})_{\mathrm{cusp}} ) } \ll_{\F,h_0,\epsilon} (EQ)^{\epsilon} E^2 Q^{1/2-\theta}. $$
\end{lemma}
\begin{lemma}
	For any $\epsilon > 0$ we have
	$$ \extnorm{\frac{1}{M_E^4} \sum_{\vec{\vp} \in I_E^4} \chi_{\vec{\vp}} \zeta(h,1, n(T).\varphi_0(\vec{\vp})_{\mathrm{Eis}} ) } \ll_{\F,h_0,\epsilon} (EQ)^{\epsilon} E Q^{(\kappa -1)/2}. $$
\end{lemma}
	
	We are finally lead to establishing (\ref{MainBd}) by
	$$
	\min_{\kappa, E} \max (E^{-1}, EQ^{-1/4+\theta /2}, Q^{-\kappa/2}, Q^{(\kappa-1)/2}, E^{1/2}Q^{(\kappa-1)/4}, EQ^{(\kappa-1)/2}) = Q^{-\frac{1-2\theta}{8}},
	$$
	with an optimal choice given by $$E = Q^{\frac{1-2\theta}{8}}, \kappa = \frac{1}{4}+\frac{\theta}{6}. $$

\section{Complements of Global Estimations}

	\subsection{Estimation for Exceptional Part}
	
	Recall $\Reis_0(\vec{\vp})$ defined in (\ref{ResPhi0SqDef}).
\begin{definition}
	For $\vec{\vp}$, we define $\vec{n}(\vec{\vp})=(n_v)_v$ for $v$ running over the set of places of $\F$ such that
\begin{itemize}
	\item $n_v=0$ for $v \mid \infty$ and $v \notin \{ \vp_1, \vp_1', \vp_2, \vp_2' \}$;
	\item $n_{\vp} = 1$ if $\vp \in \{ \vp_1', \vp_2 \}$, $n_{\vp} = -1$ if $\vp \in \{ \vp_1, \vp_2' \}$.
\end{itemize}
	For $\vec{n}=(n_v)_v, \vec{l} = (l_v)_v$ with components in $\ag{Z}$, we write $\vec{l} \preceq \vec{n}$ to mean $l_v \preceq n_v$ at each $v$, defined in Definition \ref{IneqExt}. We define for $\vec{l} \preceq \vec{n}$
	$$ \sigma(\vec{l}) = \sideset{}{_v} \sum l_v, \quad \Norm[\vec{l}] = \sideset{}{_v} \sum \norm[l_v], \quad e_{\vec{n}} = \otimes_v' e_{n_v},$$
	$$ \mu(\vec{n};s) = \sideset{}{_v} \prod \mu_v(n_v;s), \quad c(\vec{n}, \vec{l}; s) := \sideset{}{_{\vp < \infty}} \prod c_{\vp}(n_{\vp}, l_{\vp}; s) $$
	where the local components are defined in Lemma \ref{BaseCNA} (2), Corollary \ref{IntwRCal}, Remark \ref{IntwRCalA} and Definition \ref{IneqExt}. We write the Laurent expansion at $s=1$ of the complete zeta function $\Lambda_{\F}(s)$ as
\begin{equation}
	\Lambda_{\F}(s) = \frac{1}{s-1} \Lambda_{\F}^* + \gamma_{\F} + O((s-1)).
\label{DedkindZExp}
\end{equation}
	We recall $\eis^{\reg}(s,f)$ in (\ref{RegEisDef}) or \cite[Definition 5.16]{Wu9} as well as the abbreviation
	$$ \eis^{\reg, (n)}(s,f) := \frac{\partial^n}{\partial s^n} \eis^{\reg}(s,f). $$
\label{SpNot}
\end{definition}
\noindent We compute $\Reis_0(\vec{\vp})$ explicitly as
\begin{align*}
	\Reis_0(\vec{\vp}) &= \sum_{\vec{l} \preceq \vec{n}(\vec{\vp})} c(\vec{n}(\vec{\vp}),\vec{l};0) \left\{ \norm[\Lambda_{\F}^*]^2 \eis^{\reg,(2)}(\frac{1}{2},e_{\vec{l}}) + (2\gamma_{\F} - \frac{1}{2} \Lambda_{\F}^* \mu'(\vec{l}; 0) ) \overline{2\gamma_{\F}} \eis^{\reg}(\frac{1}{2},e_{\vec{l}}) \right. \\
	&\quad \left. + \left( 2 \Lambda_{\F}^* \overline{\gamma_{\F}} + \overline{\Lambda_{\F}^*} (2\gamma_{\F} - \frac{1}{2} \Lambda_{\F}^* \mu'(\vec{l}; 0)) \right) \eis^{\reg,(1)}(\frac{1}{2},e_{\vec{l}}) \right\},
\end{align*}
	where the derivative $\mu'(\vec{l};0)$ is taken with respect to $s$ in $\mu(\vec{l};s)$. Consequently, we obtain
\begin{align}
	\zeta(h,1, n(T).\Reis_0(\vec{\vp})) &= \sum_{\vec{l} \preceq \vec{n}(\vec{\vp})} c(\vec{n}(\vec{\vp}),\vec{l};0) \left\{ \norm[\Lambda_{\F}^*]^2 \zeta(h,1,n(T).\eis^{\reg,(2)}(\frac{1}{2},e_{\vec{l}})) \right. \label{SReisSpecDecomp} \\
	&\quad + (2\gamma_{\F} - \frac{1}{2} \Lambda_{\F}^* \mu'(\vec{l}; 0) ) \overline{2\gamma_{\F}} \zeta(h,1,n(T).\eis^{\reg}(\frac{1}{2},e_{\vec{l}})) \nonumber \\
	&\quad \left. + \left( 2 \Lambda_{\F}^* \overline{\gamma_{\F}} + \overline{\Lambda_{\F}^*} (2\gamma_{\F} - \frac{1}{2} \Lambda_{\F}^* \mu'(\vec{l}; 0)) \right) \zeta(h,1,n(T).\eis^{\reg,(1)}(\frac{1}{2},e_{\vec{l}})) \right\}. \nonumber
\end{align}
	Thus we are reduced to bounding, for $n=0,1,2$
	$$ \zeta(h,1,n(T).\eis^{\reg,(n)}(\frac{1}{2},e_{\vec{l}})). $$
\begin{lemma}
	For $n=0,1,2$ and any $\epsilon>0$ sufficiently small, we have
	$$ \extnorm{\zeta(h,1,n(T).\eis^{\reg,(n)}(\frac{1}{2},e_{\vec{l}})} \ll_{\F,h_0,\epsilon} (EQ)^{\epsilon}(Q^{\kappa-1} E^{-\frac{1}{2} \sigma(\vec{l})} + E^{-\frac{1}{2} \Norm[\vec{l}]}). $$
\end{lemma}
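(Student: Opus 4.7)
The plan is to reduce the truncated integral by Mellin inversion to a contour integral of $\zeta(1/2+s, 1, n(T).\eis^{\reg,(n)}(1/2, f_{\vec{l}}))$, then obtain the two bounds by shifting the contour in opposite directions. This parallels \cite[(6.14) \& (6.24)]{Wu14}; the new wrinkle compared to the cuspidal case is that the regularized Eisenstein series introduces additional poles which must be tracked.

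The key input is the factorization of Proposition \ref{EulerZetaEis} applied to $\eis^{\reg}$ and differentiated $n$ times in $s_0$ before evaluating at $s_0 = 1/2$: up to local $L_v$-factors, the zeta-functional equals a combination of $\Lambda_{\F}(s+s_0)\Lambda_{\F}(s-s_0)/\Lambda_{\F}(1+2s_0)$ and its $s_0$-derivatives, times a finite product of local Mellin transforms $\int_{\F_v^{\times}} W_{f_{\vec{l}},v}^{(s_0)}(a(y_v)) \norm[y_v]_v^{s-1/2} d^{\times}y_v$; only the four places in $I_E$ and the archimedean ones (through $n(T)$) are non-trivial. A direct computation at each place in $I_E$, identical in shape to the analysis in \cite[Section 6]{Wu14}, yields a factor $q_{v_i}^{\pm l_i(1/2+\Re s)}$, assembling into $E^{(l_1+l_2'-l_1'-l_2)(1/2+\Re s)}$ with signs dictated by the pattern in \eqref{RegPhi0SqDef}.

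Two contour shifts then finish the proof. Shifting to $\Re s = \epsilon$, the Mellin transform of $h$ on this line is of size $O(Q^{(\kappa-1)\epsilon})$ since $h$ is supported essentially in $[Q^{-\kappa-1}, Q^{\kappa-1}]$; convexity of $\Lambda_{\F}$ handles the global factor, the archimedean piece is controlled by \cite[Corollary 4.3]{Wu14} exactly as in the proof of Lemma \ref{TruncEst}, and the local product at $I_E$-places yields the first bound $(EQ)^{\epsilon} Q^{\kappa-1} E^{(l_1+l_2'-l_1'-l_2)/2}$. Shifting instead leftward past the poles at $s = \pm 1/2$ down to $\Re s = -\epsilon$ (or, equivalently, applying the functional equation of Proposition \ref{GlobZetaFR} and shifting rightward on the dual side), the same local computation now yields the opposite sign pattern, producing $(EQ)^{\epsilon} E^{-\deg \vec{l}/2}$.

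The main obstacle is the bookkeeping for the $\eis^{\reg,(n)}$ derivatives: differentiating the factorized expression $n$ times in $s_0$ at $s_0 = 1/2$ generates several terms of different analytic shape, together with residues coming from the pole of $\Lambda_{\F}(s-s_0)$ at $s = 1+s_0$ and from the pole of $\Lambda_{\F}(1+2s_0)$ in the denominator. Fortunately each such contribution is supported at finitely many exceptional points and contributes only log-factors in $Q$ and $E$, all of which are absorbed into the $(EQ)^{\epsilon}$-factor.
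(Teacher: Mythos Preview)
Your overall strategy---Mellin inversion followed by a contour shift and an Euler product bound on the shifted line---is exactly what the paper does, but you have misidentified the origin of the two terms in the bound, and this is a genuine gap.

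The two terms do \emph{not} come from ``two contour shifts in opposite directions.'' There is a \emph{single} shift, from large $\Re s$ down to $\Re s=\epsilon$, and the two terms arise as follows. The residue picked up at $s=1$ (not at $s=\pm 1/2$; by Proposition~\ref{GlobZetaFR} the poles sit at $s=1,0,-1$) is the product of the residue of $\zeta(\tfrac12+s,1,n(T).\eis^{\reg,(n)}(\tfrac12,f_{\vec l}))$---which equals $\Ht(wn(T))\log^n\Ht(wn(T))\,f_{\vec l}(w)$ and is of size $Q^{-2}\log^n Q\cdot E^{\frac12(l_1+l_2'-l_1'-l_2)}$, the $E$-exponent coming from the explicit values $a(l,n)$ of $f_{\vec l}$ at $w$ in Lemma~\ref{EisIntwAtkin}~(1)---times $\Mellin{h}(-1)$, which is of size $A^{-1}=Q^{\kappa+1}$. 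This produces $Q^{\kappa-1+\epsilon}E^{\frac12(l_1+l_2'-l_1'-l_2)}$. The remaining integral on $\Re s=\epsilon$ is then bounded using the Euler product (Proposition~\ref{EulerZetaEis}); the local factors at the $I_E$-places are the $\zeta_{v,l_v}(s,s_0)$ of Lemma~\ref{EisIntwAtkin}~(4), and Corollary~\ref{RegL2BdTech}~(4) gives each of them size $q_v^{-|l_v|/2+\epsilon}$, so the product is $E^{-\frac12\deg\vec l+\epsilon}$. This yields the second term.

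Your sketch instead claims that the integral on $\Re s=\epsilon$ already gives $Q^{\kappa-1}E^{\frac12(l_1+l_2'-l_1'-l_2)}$. That cannot work: on $\Re s=\epsilon$ the Mellin factor $\Mellin{h}(-s)$ is of size $Q^{O(\epsilon)}$, not $Q^{\kappa-1}$, and the local $I_E$-factors on that line contribute $E^{-\frac12\deg\vec l}$, not $E^{\frac12(l_1+l_2'-l_1'-l_2)}$ (the latter signed combination only appears through $f_{\vec l}(w)$ in the constant term, hence only in the residue). A second leftward shift is neither needed nor correct here.
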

\begin{proof}
	Since the Mellin transform the integrand is explicitly related to $L$-functions, we depart from Mellin inversion as
	$$ \zeta(h,1,n(T).\eis^{\reg,(n)}(\frac{1}{2},e_{\vec{l}})) = \int_{\Re s \gg 1} \Mellin{h}(-s) \zeta(1/2+s, 1, n(T).\eis^{\reg,(n)}(\frac{1}{2},e_{\vec{l}})) \frac{ds}{2\pi i}, $$
	and then shift the vertical line of integration to the left. There are poles of the integrand determined by Proposition \ref{GlobZetaFR}. We calculate the constant terms in order to analyze the poles. We have
	$$ \eisCst^{\reg}(\frac{1}{2}+s,e_{\vec{l}})(a(y)k) = \norm[y]_{\A}^{1+s} e_{\vec{l}}(k) + \frac{2s\Lambda_{\F}(-2s)}{\Lambda_{\F}(2+2s)} \frac{\mu(\vec{l};\frac{1}{2}+s)}{2s} \norm[y]_{\A}^{-s} e_{\vec{l}}(k), \vec{l} \neq 0; $$
	$$ \eisCst^{\reg}(\frac{1}{2}+s)(a(y)k,e_{\vec{0}}) = \norm[y]_{\A}^{1+s} e_{\vec{0}}(1) + \frac{2s\Lambda_{\F}(-2s)}{\Lambda_{\F}(2+2s)} \frac{\norm[y]_{\A}^{-s}-1}{2s} e_{\vec{0}}(1). $$
	Hence we get for $n=0,1,2$, $\vec{l} \neq \vec{0}$ and with constants $c_k$ depending only on $\F$
	$$ n(T).\eisCst^{\reg,(n)}(\frac{1}{2},e_{\vec{l}})(a(y)) = \norm[y]_{\A} \log^n \norm[y]_{\A} e_{\vec{l}}(1) + \sum_{k=\Norm[\vec{l}]-1}^n c_k E^{-\Norm[\vec{l}]} (\log E)^{k-\Norm[\vec{l}]+1} \log^{n-k} \norm[y]_{\A} e_{\vec{l}}(1), $$
\begin{align*}
	n(T).\eisCst^{\reg,(n)}(\frac{1}{2},e_{\vec{l}})(a(y)w) &= (\Ht(wn(T))\norm[y]_{\A}) \log^n (\Ht(wn(T)) \norm[y]_{\A}) e_{\vec{l}}(w) + \\
	&\quad \sum_{k=\Norm[\vec{l}]-1}^n c_k E^{-\Norm[\vec{l}]} (\log E)^{k-\Norm[\vec{l}]+1} \log^{n-k} (\norm[y]_{\A}\Ht(wn(T))) e_{\vec{l}}(w);
\end{align*}
	while for $\vec{l}=\vec{0}$
	$$ n(T).\eisCst^{\reg,(n)}(\frac{1}{2},e_{\vec{0}})(a(y)) = \norm[y]_{\A} \log^n \norm[y]_{\A} e_{\vec{0}}(1) + \sum_{k=0}^n c_k \log^{n-k+1} \norm[y]_{\A} e_{\vec{0}}(1), $$
\begin{align*}
	n(T).\eisCst^{\reg,(n)}(\frac{1}{2},e_{\vec{0}})(a(y)w) &= (\Ht(wn(T))\norm[y]_{\A}) (\log \Ht(wn(T)) \norm[y]_{\A})^n e_{\vec{0}}(1) + \\
	&\quad \sum_{k=0}^n c_k (\log \norm[y]_{\A}\Ht(wn(T)))^{n-k+1} e_{\vec{0}}(1).
\end{align*}
	By Proposition \ref{GlobZetaFR}, $\zeta(\frac{1}{2}+s,1,n(T).\eis^{\reg,(n)}(\frac{1}{2},e_{\vec{l}}))$ has
\begin{itemize}
	\item a pole at $s=1$ with residue equal to
	$$ \Ht(wn(T)) \log^n \Ht(wn(T)) e_{\vec{l}}(w), $$
	which is bounded as, using Corollary \ref{CVSV}
	$$ Q^{-2} \log^n Q \cdot E^{-\frac{1}{2} \sigma(\vec{l})}. $$
	\item a pole at $s=0$.
	\item a pole at $s=-1$.
\end{itemize}
	We can thus write for $0 < \epsilon < 1$, using \cite[(6.1) \& (6.2)]{Wu14} and Proposition \ref{GlobZetaFR}
\begin{align*}
	\zeta(h,1,n(T).\eis^{\reg,(n)}(\frac{1}{2},e_{\vec{l}}) &= \int_{\Re s = \epsilon} \zeta(\frac{1}{2}+s, 1, n(T).\eis^{\reg,(n)}(\frac{1}{2},e_{\vec{l}})) \Mellin{h}(-s) \frac{ds}{2 \pi i} \\
	&\quad + O_{\F, h_0, \epsilon}(Q^{\kappa-1+\epsilon} E^{-\frac{1}{2} \sigma(\vec{l})}). 
\end{align*}
	To bound the integral on the vertical line $\Re s = \epsilon$, we have by Proposition \ref{EulerZetaEis}
\begin{align*}
	&\quad \zeta(\frac{1}{2}+s,1,n(T).\eis^{\reg,(n)}(\frac{1}{2},e_{\vec{l}})) \\
	&= \frac{\partial^n}{\partial s_0^n} \mid_{s_0=\frac{1}{2}} \left\{ \frac{\zeta_{\F}(\frac{1}{2}+s+s_0)\zeta_{\F}(\frac{1}{2}+s-s_0)}{\zeta_{\F}(1+2s_0)} \cdot \prod_{v \mid \infty} \int_{\F_v^{\times}} W_{0,v}(s_0,a(y_v)) \psi_v(y_vT_v) \norm[y_v]_v^{s} d^{\times}y_v \cdot \right. \\
	&\quad \prod_{\vp<\infty, T_{\vp} \neq 0} \frac{\zeta_{\vp}(1+2s_0)}{\zeta_{\vp}(\frac{1}{2}+s+s_0)\zeta_{\vp}(\frac{1}{2}+s-s_0)} \int_{\F_{\vp}^{\times}} W_{0,\vp}(s_0,a(y_{\vp})) \psi_{\vp}(y_{\vp}T_{\vp}) \norm[y_{\vp}]_{\vp}^{s} d^{\times}y_{\vp} \cdot \\
	&\quad \left. \prod_{\vp \in \{\vp_1,\vp_2,\vp_1',\vp_2'\}} \Cond(\psi_{\vp})^{s-s_0} \zeta_{\vp,l_{\vp}}(s,s_0) \right\},
\end{align*}
	where $\zeta_{\vp,l_{\vp}}(s,s_0)$ is defined in Lemma \ref{LocZetaRatio}. From Corollary \ref{LocEstNAEx} we deduce
	$$ \extnorm{ \frac{\partial^k}{\partial s_0^k} \mid_{s_0=\frac{1}{2}} \prod_{\vp \in \{\vp_1,\vp_2,\vp_1',\vp_2'\}} \Cond(\psi_{\vp})^{s-s_0} \zeta_{\vp,l_{\vp}}(s,s_0) } \ll_{\F,\epsilon} E^{-\frac{1}{2} \Norm[\vec{l}]+\epsilon}, \quad k \leq n \leq 2. $$
	At $\vp<\infty, T_{\vp} \neq 0$, assuming $T_{\vp} = \varpi_{\vp}^{-k_{\vp}}$ we can calculate explicitly (using for example \cite[Lemma 4.7]{Wu14})
\begin{align*}
	&\quad \frac{\zeta_{\vp}(1+2s_0)}{\zeta_{\vp}(\frac{1}{2}+s+s_0)\zeta_{\vp}(\frac{1}{2}+s-s_0)} \int_{\F_{\vp}^{\times}} W_{0,\vp}(s_0,a(y_{\vp})) \psi_{\vp}(y_{\vp}T_{\vp}) \norm[y_{\vp}]_{\vp}^{s} d^{\times}y_{\vp} \\
	&= q_{\vp}^{-k_{\vp}(\frac{1}{2}+s-s_0)} \frac{1-q_{\vp}^{-\frac{1}{2}-s-s_0}}{1-q_{\vp}^{-2s_0}} - q_{\vp}^{-k_{\vp}(\frac{1}{2}+s+s_0)-2s_0} \frac{1-q_{\vp}^{-\frac{1}{2}-s+s_0}}{1-q_{\vp}^{-2s_0}} \\
	&\quad - q_{\vp}^{-(k_{\vp}-1)(\frac{1}{2}+s-s_0)} \frac{(1-q_{\vp}^{-\frac{1}{2}-s-s_0})(1-q_{\vp}^{-\frac{1}{2}-s+s_0})}{q_{\vp}-1} \frac{1-q_{\vp}^{-2k_{\vp}s_0}}{1-q_{\vp}^{-2s_0}}.
\end{align*}
	Thus we obtain the following bound
\begin{align*}
	&\quad \extnorm{ \frac{\partial^k}{\partial s_0^k} \mid_{s_0=\frac{1}{2}} \frac{\zeta_{\vp}(1+2s_0)}{\zeta_{\vp}(\frac{1}{2}+s+s_0)\zeta_{\vp}(\frac{1}{2}+s-s_0)} \int_{\F_{\vp}^{\times}} W_{0,\vp}(s_0,a(y_{\vp})) \psi_{\vp}(y_{\vp}T_{\vp}) \norm[y_{\vp}]_{\vp}^{s} d^{\times}y_{\vp} } \\
	& \ll q_{\vp}^{-k_{\vp} \epsilon} (k_{\vp} \log q_{\vp})^k \ll_k \epsilon^{-k}.
\end{align*}
	At $v \mid \infty$, we can trivially bound
	$$ \extnorm{ \frac{\partial^k}{\partial s_0^k} \mid_{s_0=\frac{1}{2}} \int_{\F_v^{\times}} W_{0,v}(s_0,a(y_v)) \psi_v(y_vT_v) \norm[y_v]_v^{s} d^{\times}y_v } \leq \int_{\F_v^{\times}} \extnorm{\frac{\partial^k}{\partial s_0^k} \mid_{s_0=\frac{1}{2}} W_{0,v}(s_0,a(y_v))} \norm[y_v]_v^{\epsilon} d^{\times}y_v \ll_{\epsilon} 1 $$
	using classical asymptotic estimation for Whittaker functions (or \cite[Proposition 4.1]{J04}). Together with convex bounds for $\zeta_{\F}$ we see for $n=0,1,2$
	$$ \extnorm{\zeta(\frac{1}{2}+s,1,n(T).\eis^{\reg,(n)}(\frac{1}{2},e_{\vec{l}}))} \ll_{\F,\epsilon} (1+\norm[s])^{\frac{[\F:\Q]}{2}+\epsilon} E^{-\frac{1}{2} \Norm[\vec{l}]+\epsilon}. $$
	We get the desired bound using \cite[(6.1) \& (6.2)]{Wu14} again.
\end{proof}
\begin{corollary}
	For a typical pattern, we have for any $\epsilon > 0$
	$$ \norm[\zeta(h,1, n(T).\Reis_0(\vec{\vp}))] \ll_{\F,h_0,\epsilon} (EQ)^{\epsilon}(Q^{\kappa-1}E^2 + E^{-2}). $$
\label{ReisZetaEst}
\end{corollary}
\begin{proof}
	This follows from (\ref{SReisSpecDecomp}) and the following bounds resulting from Lemma \ref{BaseCNA} (2), Corollary \ref{IntwRCal} and Remark \ref{IntwRCalA}:
	$$ \mu'(\vec{l}; 0) \ll_{\epsilon} E^{\epsilon}, \quad c(\vec{n}(\vec{\vp}),\vec{l}; 0) \ll E^{-\frac{1}{2}(4-\Norm[\vec{l}])} = E^{-2 + \frac{1}{2} \Norm[\vec{l}]}. $$
\end{proof}

	\subsection{Estimation for Regularized $\intL^4$-Norms}
	
	Recall $E > 0$ defined in Lemma \ref{TruncEst}. Choose a uniformizer $\varpi_{\vp}$ at every finite place $\vp$. Let $\vec{n} = (n_{\vp})_{\vp}, n_{\vp} \in \ag{N}$ such that
\begin{itemize}
	\item $n_{\vp} = 0$ unless $E \leq q_{\vp} < 2E$;
	\item both the number of $\vp$ such that $n_{\vp} \neq 0$ and $\norm[n_{\vp}]$ are bounded by some absolute constant.
\end{itemize}
	Associated to $\vec{n}$ we define $t = t(\vec{n}) \in \ag{A}^{\times}$ such that $t_v = 1, v \mid \infty$ and $t_{\vp} = \varpi_{\vp}^{-n_{\vp}}$. Take $\varphi_j=\eis^*(0,f_j)$ for $j=1,2$ where
\begin{itemize}
	\item $f_{j,v} \in \pi_v(1,1)$ is a unitary vector, spherical at each $v=\vp< \infty$;
	\item at each $v \mid \infty$, $f_{j,v}$ lies in some $\gp{K}_v$-isotypic $H_{n_{j,v}}$, whose definition is recalled in Remark \ref{IntwRCalA};
	\item write $\vec{n}_j = (n_{j,v})_{v \mid \infty}$ or $(n_{j,v})_v$ with $n_{j,\vp} = 0$ for $\vp < \infty$;
	\item $\norm[n_{j,v}], v \mid \infty$ are bounded by some absolute constant.
\end{itemize}
	Recall \cite[Definition 5.26]{Wu9} and write for $j=1,2$
	$$ \Reis_t = \Reis(a(t)\varphi_1 \cdot \overline{\varphi_2}), \quad \varphi_t = a(t)\varphi_1 \cdot \overline{\varphi_2} - \Reis_t, \quad \Reis_j = \Reis(\norm[\varphi_j]^2). $$
	We are interested in the $\intL^2$-norm of $\varphi_t$ in terms of $E$.
\begin{theorem}
	With the above notations and conditions, we have
	$$ \Norm[\varphi_t] \ll (\log E)^3. $$
\label{L4Bd}
\end{theorem}
\begin{proof}
	Note that
\begin{align}
	\Norm[\varphi_t]^2 &= \int_{[\PGL_2]}^{\reg} \left( a(t)\norm[\varphi_1]^2 - a(t)\Reis_1 \right)(g) \left( \norm[\varphi_2]^2 - \Reis_2 \right)(g) dg + \int_{[\PGL_2]}^{\reg} \norm[\varphi_2]^2(g)a(t)\Reis_1(g) dg \label{L4NrG} \\
	&\quad - \int_{[\PGL_2]}^{\reg} a(t)\Reis_1(g)\Reis_2(g) dg + \int_{[\PGL_2]}^{\reg} \extnorm{\Reis_t(g)}^2 dg \nonumber \\
	&\quad - 2 \Re \int_{[\PGL_2]}^{\reg} a(t)\varphi_1(g) \overline{\varphi_2(g)} \overline{\Reis_t(g)} dg + \int_{[\PGL_2]}^{\reg} a(t)\norm[\varphi_1]^2(g)\Reis_2(g) dg. \nonumber
\end{align}
	We bound the right hand side term by term, which will occupy the rest of this subsection. Note that only the fourth and sixth terms have growing contribution as $(\log E)^3$ and $(\log E)^6$.
\end{proof}

	For the first term in (\ref{L4NrG}), we use Cauchy-Schwarz inequality to get
\begin{align*}
	&\quad \int_{[\PGL_2]}^{\reg} \left( a(t)\norm[\varphi_1]^2 - a(t)\Reis_1 \right)(g) \left( \norm[\varphi_2]^2 - \Reis_2 \right)(g) dg \\
	&= \int_{[\PGL_2]} a(t)\left( \norm[\varphi_1]^2 - \Reis_1 \right)(g) \left( \norm[\varphi_2]^2 - \Reis_2 \right)(g) dg \\
	&\leq \extNorm{a(t)\left( \norm[\varphi_1]^2 - \Reis_1 \right)} \cdot \extNorm{\norm[\varphi_2]^2 - \Reis_2} = \extNorm{\norm[\varphi_1]^2 - \Reis_1} \cdot \extNorm{\norm[\varphi_2]^2 - \Reis_2},
\end{align*}
	which is independent of $t$ hence $E$.
	
	For the second \& third term in (\ref{L4NrG}), first notice that we can write
	$$ \Reis_1 = \sum_{k=0}^2 \Reis_{1,k}^{(k)}(\frac{1}{2}) $$
where $\Reis_{1,k}(s)$ is a regularizing Eisenstein series (\ref{RegEisDef}) and the superscript $(k)$ means taking derivative $k$ times with respect to $s$. Moreover, $\Reis_{1,k}(s)$ are spherical at finite places. We notice further that although the regularized integral is not $\GL_2(\ag{A})$-invariant, it is still $\gp{K}$-invariant \cite[Proposition 5.27 (2)]{Wu9}. Hence if we write the Hecke operator associated with $\vec{n}$ as
	$$ T(\vec{n}) := \int_{\gp{K}_{\fin} \times \gp{K}_{\fin}} \rpR(\kappa_1 a(t) \kappa_2) d\kappa_1 d\kappa_2 = \sideset{}{_{\vp < \infty}} \prod T(\vp^{\norm[n_{\vp}]}), $$
where $\rpR(\cdot)$ denotes the $\GL_2(\ag{A})$-translation, we have
	$$ \int_{[\PGL_2]}^{\reg} \norm[\varphi_2]^2(g)a(t)\Reis_1(g) dg = \int_{[\PGL_2]}^{\reg} \norm[\varphi_2]^2(g) \left( T(\vec{n})\Reis_1(g) \right) dg, $$
	$$ \int_{[\PGL_2]}^{\reg} a(t)\Reis_1(g)\Reis_2(g) dg = \int_{[\PGL_2]}^{\reg} \left( T(\vec{n})\Reis_1(g) \right) \Reis_2(g) dg. $$
	$\Reis_{1,k}(s)$ is a generalized eigenvector of $T(\vec{n})$ with eigenvalue \cite[Theorem 4.6.6]{Bu98}
\begin{equation}
	\lambda(\vec{n};s) = \sideset{}{_{\vp}} \prod \lambda_{\vp}(\norm[n_{\vp}];s), \quad \lambda_{\vp}(n;s) = \frac{q_{\vp}^{-n/2}}{1+q_{\vp}^{-1}} \left( \frac{q_{\vp}^{(n+1)s} - q_{\vp}^{-(n+1)s}}{q_{\vp}^s - q_{\vp}^{-s}} - q_{\vp}^{-1} \frac{q_{\vp}^{(n-1)s} - q_{\vp}^{-(n-1)s}}{q_{\vp}^s - q_{\vp}^{-s}} \right)
\label{HeckeEV}
\end{equation}
	in the sense that (c.f. \cite[Remark 5.17]{Wu9})
	$$ T(\vec{n})\Reis_{1,k}(1/2+s) = \lambda(\vec{n};1/2+s)\Reis_{1,k}(1/2+s) + c_k (\lambda(\vec{n};1/2+s)-1) \lambda_{\F}(s), $$
where $\lambda_{\F}(s)$ is a ratio of zeta functions of $\F$ (see for example Theorem \ref{RIPEisUnitary}) and $c_k \in \ag{C}$ is some constant depending on $\varphi_1$. Note that $\lambda(\vec{n};1/2)=1$. Consequently, we get
\begin{align*}
	T(\vec{n})\Reis_{1,k}^{(k)}(1/2) &= \sum_{l=0}^k \binom{k}{l} \lambda^{(l)}(\vec{n};1/2) \Reis_{1,k}^{(k-l)}(1/2) \\
	&\quad + c_k \sum_{l=0}^{k-1} \binom{k}{l+1} \lambda^{(l+1)}(\vec{n};1/2) \lambda_{\F}^{(k-l)}(0) + \frac{\lambda^{(k+1)}(\vec{n};1/2) \lambda_{\F}^{(-1)}(0)}{k+1}.
\end{align*}
	Inserting the obvious bound $\lambda^{(l)}(\vec{n};1/2) \ll (\log E)^l$, we see that the second and third term in (\ref{L4NrG}) are bounded as $\ll (\log E)^3$.
\begin{remark}
	$\lambda_{\vp}(n;s) = c_{\vp}(n,0;s)$ defined in Lemma \ref{BaseCNA} (2).
\end{remark}

	For the fourth term in (\ref{L4NrG}), we first make $\Reis_t$ explicit.
\begin{definition}
	We extend the definition of $\vec{l} \preceq \vec{n}$ in Definition \ref{SpNot} into the case $n_v \neq 0, v \mid \infty$ by
\begin{itemize}
	\item $\F_v = \ag{R}$: $l_v \preceq n_v$ means $l_v = n_v$;
	\item $\F_v = \ag{C}$: $l_v \preceq n_v$ means $0 \leq l_v \leq n_v$.
\end{itemize}
	At $v \mid \infty$, we write $e_{n_v}$ for some unitary vector in $H_{n_v}$ recalled in Remark \ref{IntwRCalA} without specification.
\end{definition}
\noindent We can thus write
\begin{equation}
	a(t).\varphi_1 = \sideset{}{_{\vec{l} \preceq \vec{n}} } \sum c(\vec{n},\vec{l};0) \eis^*(0,e_{\vec{n}_1+\vec{l}}) \quad \Rightarrow \quad \Reis_t = \sideset{}{_{\vec{l} \preceq \vec{n}} } \sum c(\vec{n},\vec{l};0) \Reis( \eis^*(0,e_{\vec{n}_1+\vec{l}}) \overline{\varphi_2} ).
\label{TransD}
\end{equation}
	Recall the tensor product formulas for representations of $\gp{K}_v, v \mid \infty$
\begin{itemize}
	\item $\F_v = \ag{R}$: $H_n \otimes H_m \simeq H_{n+m}$;
	\item $\F_v = \ag{C}$: $H_n \otimes H_m \simeq H_{n+m} \oplus H_{n+m-2} \oplus \cdots \oplus H_{\norm[n-m]}$.
\end{itemize}
	Together with the explicit determination of constant terms
	$$ \eisCst^*(0,e_{\vec{n}_1+\vec{l}})(a(y)\kappa) = \left\{ \Lambda_{\F}^*(1) \norm[y]_{\A}^{\frac{1}{2}} \log \norm[y]_{\A} + \left( 2\gamma_{\F} - \frac{1}{2} \Lambda_{\F}^*(1) \mu'(\vec{n}_1+\vec{l}; 0) \right) \norm[y]_{\A}^{\frac{1}{2}} \right\} e_{\vec{n}_1+\vec{l}}(\kappa), $$
	$$ \varphi_{2,\grN}(a(y)\kappa) = \left\{ \Lambda_{\F}^*(1) \norm[y]_{\A}^{\frac{1}{2}} \log \norm[y]_{\A} + \left( 2\gamma_{\F} - \frac{1}{2} \Lambda_{\F}^*(1) \mu'(\vec{n}_2; 0) \right) \norm[y]_{\A}^{\frac{1}{2}} \right\} f_2(\kappa), $$
	we deduce the existence of $c_{\vec{m}}$ such that
\begin{equation}
	\Reis( \eis^*(0,e_{\vec{n}_1+\vec{l}}) \overline{\varphi_2} ) = \sum_{\vec{m} \preceq \vec{n}_1 + \vec{n}_2} c_{\vec{m}} \left( c_2 \eis^{\reg,(2)}(\frac{1}{2},e_{\vec{m}+\vec{l}}) + c_1(\vec{l}) \eis^{\reg,(1)}(\frac{1}{2},e_{\vec{m}+\vec{l}}) + c_0(\vec{l}) \eis^{\reg}(\frac{1}{2},e_{\vec{m}+\vec{l}}) \right),
\label{TransD2}
\end{equation}
	where we have written
	$$ c_2 = c_2(\vec{l}) = \norm[\Lambda_{\F}^*]^2, \quad c_0(\vec{l}) = \left( 2\gamma_{\F} - \frac{1}{2} \Lambda_{\F}^* \mu'(\vec{n}_1+\vec{l}; 0) \right) \overline{\left( 2\gamma_{\F} - \frac{1}{2} \Lambda_{\F}^* \mu'(\vec{n}_2; 0) \right)}, $$
	$$ c_1(\vec{l}) = \left( 2\gamma_{\F} - \frac{1}{2} \Lambda_{\F}^* \mu'(\vec{n}_1+\vec{l}; 0) \right) \overline{\Lambda_{\F}^*} + \Lambda_{\F}^* \overline{\left( 2\gamma_{\F} - \frac{1}{2} \Lambda_{\F}^* \mu'(\vec{n}_2; 0) \right)}. $$
\begin{lemma}
	For $k_1,k_2 \geq 0$, the regularized integral
	$$ \int_{[\PGL_2]}^{\reg} \eis^{\reg,(k_1)}(\frac{1}{2},e_{\vec{m}_1+\vec{l}_1})(g) \overline{\eis^{\reg,(k_2)}(\frac{1}{2},e_{\vec{m}_2+\vec{l}_2})(g)} dg $$
	is non-vanishing only if $\vec{m}_1 = \vec{m}_2, \vec{l}_1 = \vec{l}_2$. We also have
	$$ \extnorm{\int_{[\PGL_2]}^{\reg} \eis^{\reg,(k_1)}(\frac{1}{2},e_{\vec{m}+\vec{l}})(g) \overline{\eis^{\reg,(k_2)}(\frac{1}{2},e_{\vec{m}+\vec{l}})(g)} dg} \ll E^{-\Norm[\vec{l}]} (\log E)^{k_1+k_2+2}. $$
\end{lemma}
\begin{proof}
	This is a direct consequence of Theorem \ref{RIPEisSing}, together with
	$$ \widetilde{\Intw}_{1/2}^{(k)}e_{\vec{m}+\vec{l}} = \left. \frac{d^k}{ds^k} \right|_{s=1/2} \left( \lambda_{\F}(s-1/2)\mu(\vec{m}+\vec{l};s) \right) e_{\vec{m}+\vec{l}}. $$
\end{proof}
\noindent Applying the lemma and the estmations
	$$ \mu'(\vec{n}_1+\vec{l}; 0)= \mu'(\vec{n_1}; 0)+\mu'(\vec{l}; 0), \quad \mu'(\vec{l}; 0) \ll \log E, \quad \norm[c(\vec{n},\vec{l};0)] \ll E^{-\frac{\Norm[\vec{n}] - \Norm[\vec{l}]}{2}} $$
we finally get
\begin{align*}
	\int_{[\PGL_2]}^{\reg} \extnorm{\Reis_t(g)}^2 dg &= \sum_{\vec{l} \preceq \vec{n}} \sum_{\vec{m} \preceq \vec{n}_1 + \vec{n}_2} \norm[c(\vec{n},\vec{l};0)c_{\vec{m}}]^2 \sum_{k_1,k_2=0}^2 c_{k_1}(\vec{l}) \overline{c_{k_2}(\vec{l})} \cdot \\
	&\quad \int_{[\PGL_2]}^{\reg} \eis^{\reg,(k_1)}(\frac{1}{2},e_{\vec{m}+\vec{l}})(g) \overline{\eis^{\reg,(k_2)}(\frac{1}{2},e_{\vec{m}+\vec{l}})(g)} dg \\
	&\ll E^{-\Norm[\vec{n}]} (\log E)^6.
\end{align*}

	For the fifth term in (\ref{L4NrG}), note that we have computed/decomposed $a(t).\varphi_1$ and $\Reis_t$ in (\ref{TransD}) and (\ref{TransD2}).
\begin{lemma}
	For any $k \geq 0$, the regularized integral
	$$ \int_{[\PGL_2]}^{\reg} \eis^*(0,e_{\vec{n}_1+\vec{l}_1})(g) \overline{\varphi_2(g)} \overline{\eis^{\reg,(k)}(\frac{1}{2},e_{\vec{m}+\vec{l}_2})(g)} dg $$
	is non-vanishing only if $\vec{l}_1 = \vec{l}_2$. We also have
	$$ \extnorm{ \int_{[\PGL_2]}^{\reg} \eis^*(0,e_{\vec{n}_1+\vec{l}})(g) \overline{\varphi_2(g)} \overline{\eis^{\reg,(k)}(\frac{1}{2},e_{\vec{m}+\vec{l}})(g)} dg } \ll_k E^{-\Norm[\vec{l}]} (\log E)^{k+4}. $$
\end{lemma}
\begin{proof}
	We apply Theorem \ref{RegTripEis}. The ``degenerate part'', i.e., the weighted sum involving $\mathrm{P}_{\gp{K}}$ is easily seen to be non-vanishing only if $\vec{l}_1 = \vec{l}_2$, and in the case $\vec{l}_1 = \vec{l}_2 = \vec{l}$, it is bounded by $E^{-\Norm[\vec{l}]} (\log E)^{k+4}$ since $\mu^{(n)}(\vec{l};1/2) \ll E^{-\Norm[l]} (\log E)^n$. For the ``main part'', we note that
	$$ R(\frac{1}{2}+\bar{s}, \eis^*(0,e_{\vec{n}_1+\vec{l}_1})(g) \overline{\varphi_2(g)}; \overline{e_{\vec{m}+\vec{l}_2}}) $$
	is the product of $\overline{\zeta_{\F}(s+1)^4 \zeta_{\F}(2s+2)^{-1}}$, some local components at $v \mid \infty$ irrelevant for estimation, and
	$$ \overline{\zeta_{\vp}( \begin{array}{cc|c} l_{1,\vp} & 0 & l_{2,\vp} \\ \overline{0} & 0 & 1/2+s \end{array} )} $$
	defined in (\ref{RSLocZetaRatioDef}). Lemma \ref{LocNVNAL4_1} (2) implies the non-vanishing assertion. We need to estimate
	$$ \left( \frac{\partial^k R}{\partial s^k} \right)^{\hol} \left( \frac{1}{2}, \cdots \right) = \frac{k!}{(k+4)!} \cdot \left. \frac{\partial^{k+4}}{\partial s^{k+4}} \right|_{s=0} \left( s^4 R(\frac{1}{2}+s, \cdots) \right). $$
	Corollary \ref{LocEstNAL4_1} concludes the bound.
\end{proof}
\noindent We deduce that
\begin{align*}
	\int_{[\PGL_2]}^{\reg} a(t)\varphi_1(g) \overline{\varphi_2(g)} \overline{\Reis_t(g)} dg &= \sum_{\vec{m} \preceq \vec{n}_1 \cdot \vec{n}_2} \sum_{\vec{l} \preceq \vec{n}} c_{\vec{m}} \norm[c(\vec{n},\vec{l};0)]^2 \cdot \sum_{k=0}^2 c_k(\vec{l}) \\
	&\quad  \int_{[\PGL_2]}^{\reg} \eis^*(0,e_{\vec{n}_1+\vec{l}})(g) \overline{\varphi_2(g)} \overline{\eis^{\reg,(k)}(\frac{1}{2},e_{\vec{m}+\vec{l}})(g)}dg \\
	&\ll E^{-\Norm[\vec{n}]} (\log E)^6.
\end{align*}

	For the last term in (\ref{L4NrG}), first notice that we can write
	$$ \Reis_2 = \sum_{k=0}^2 \eis^{\reg,(k)}(\frac{1}{2}, h_k), $$
where $h_k$ are spherical at $\vp$ for which $n_{\vp} \neq 0$. We are reduced to bounding
	$$ \int_{[\PGL_2]}^{\reg} \norm[a(t).\eis^*(0,f_1)(g)]^2 \eis^{\reg,(k)}(\frac{1}{2}, h_k)(g) dg, $$
to which apply directly Theorem \ref{RegTripEis}. The degenerate part contributes $(\log E)^3$. In fact, only
	$$ \mathrm{P}_{\gp{K}}(\Intw_0^{(l)} a(t)f_1 \cdot \overline{ a(t)f_1 } ) \mathrm{P}_{\gp{K}}(h_k), 0 \leq l \leq 3 $$
have non constant contribution. For the main part, using the $\GL_2(\F_{\vp})$-invariance of the local Rankin-Selberg zeta functions we easily deduce
	$$ R(\frac{1}{2}+s, \extnorm{ a(t).\eis^*(0,f_1) }^2; h_k) = \frac{\zeta_{\F}(s+1)^4}{\zeta_{\F}(2s+2)} \cdot \lambda(\vec{n}; \frac{1}{2}+s) \cdot \frac{\zeta_{\F}(2s+2) R(\frac{1}{2}+s, \extnorm{ \eis^*(0,f_1) }^2; h_k)}{\zeta_{\F}(s+1)^4}, $$
where only the term $\lambda(\vec{n}; \frac{1}{2}+s)$ defined in (\ref{HeckeEV}) depends on $E$. We bound
	$$ \frac{k!}{(k+4)!} \left. \frac{\partial^{k+4}}{\partial s^{k+4}} \right|_{s=0} s^4 R(\frac{1}{2}+s, \extnorm{ a(t).\eis^*(0,f_1) }^2; h_k) \ll (\log E)^{k+4} $$
from $\lambda^{(l)}(\vec{n}; 1/2) \ll (\log E)^l$, and conclude that the last term in (\ref{L4NrG}) is bounded as $\ll (\log E)^6$.

\section{Appendix: Dis-adelization in a Special Case}

	For the convenience of the readers not familiar with adelic language, and also for those who want to see the computation in the classical setting, we offer some essential part of the dis-adelized computation in the case $\F=\Q$ and $\chi = \norm_{\A}^{i\mu}$, i.e., the case for the Riemann-zeta function.
	
	On the adelic side, we shall work in the framework of $\PGL_2$, hence all previous groups such as $\gp{N}, \gp{B}, \gp{K}$ are considered as subgroups of $\PGL_2$ via the canonical projection. $1_{\fin}$ denotes the identity element in $\PGL_2(\A_{\fin})$. $\varphi$ is reserved for functions on $[\PGL_2]$. We restrict ourselves to $\SO_2(\R)$-invariant functions. The finite places of $\Q$ correspond to prime numbers $\vp = p \Z$.
	
	On the classical side, we shall confuse $\SO_2(\R)$-invariant modular forms on $\widetilde{\Gamma} \backslash \PGL_2(\R)$ with functions on $\Gamma \backslash \pH$, where $\Gamma < \PSL_2(\ag{R})$ is a lattice, $\widetilde{\Gamma}$ is the subgroup generated by $\Gamma$ and ${\rm diag}(1,-1)$. $f$ is reserved for functions on $\widetilde{\Gamma} \backslash \PGL_2(\R)$. Hence
	$$ f(z) := f\left( \begin{pmatrix} y & x \\ & 1 \end{pmatrix} \right), \quad z = x+iy \in \pH. $$
	We also write $e(x) := e^{2\pi i x}$ for $x \in \R$.
	
	We assume the existence of a compact subgroup $\gp{K}_{\Gamma} < \PGL_2(\A_{\fin})$ such that
	$$ \gp{K}_{\Gamma} \cap \PGL_2(\Q) = \widetilde{\Gamma}, \quad \det (\gp{K}_{\Gamma}) = \widehat{\Z}^{\times}, \quad a(\widehat{\Z}^{\times}) \subset \gp{K}_{\Gamma}. $$
	Then the strong approximation theorem, $\PGL_2(\A_{\fin}) = \PGL_2(\Q) \gp{K}_{\Gamma}$, yields
	$$ \PGL_2(\Q) \backslash \PGL_2(\A) / \PSO_2(\R) \gp{K}_{\Gamma} \simeq \widetilde{\Gamma} \backslash \PGL_2(\R) / \PSO_2(\R) \simeq \Gamma \backslash \pH. $$
	$\gp{K}_{\Gamma}$-invariant functions $\varphi$ are in bijective correspondence with functions $f$, determining each other by
	$$ f(g) = \varphi(g, 1_{\fin}), \quad \forall g \in \PGL_2(\R). $$
	Consequently, if $p$ denotes the standard uniformizer at the place $p$, then since $p \in \Z_q^{\times}$ for any prime $q \neq p$ and $\varphi$ is invariant by $a(\widehat{\Z}^{\times})$, we get
	$$ (a(p^{-1}).\varphi)(g,1_{\fin}) = \varphi(g,1,\cdots, 1, a(p^{-1}), 1, \cdots)) = \varphi(a(p)g,1_{\fin}) = f(a(p)g), $$
	or $(a(p^{-1})f)(z) := f(pz)$.
	The constant term, defined by
	$$ \varphi_{\gp{N}}(g) := \int_{\Q \backslash \A} \varphi(n(x)g) dx, $$
	is left-$\gp{B}(\Q)$ right-$\gp{K}_{\Gamma}$ invariant. From the strong approximation theorem, we deduce that
	$$ \gp{B}(\Q) \backslash \PGL_2(\A_{\fin}) / \gp{K}_{\Gamma} \simeq \gp{B}(\Q) \backslash \PGL_2(\Q) / \widetilde{\Gamma} $$
	corresponds bijectively with the set of cusps of $\Gamma$. (Look at the right action of $\PGL_2(\Q)$ on $\mathbb{P}^1(\Q)$!) We take a system of representatives $[\Gamma] \subset \PGL_2(\Q)$ for the above double coset decomposition. Hence $\varphi_{\gp{N}}$ and the following finite collection determine each other
	$$ \varphi_{\gp{N}}(g, \gamma), \quad \gamma \in [\Gamma] \subset \PGL_2(\Q) \subset \PGL_2(\A_{\fin}). $$
	For every $\gamma \in [\Gamma]$, $\gp{N}(\A_{\fin}) \cap \gamma \gp{K}_{\Gamma} \gamma^{-1}$ is a compact subgroup of $\gp{N}(\A_{\fin})$, hence equal to $\gp{N}(d_{\gamma} \widehat{\Z})$ for some $d_{\gamma} \in \Q^{\times}$. The strong approximation theorem for $\Q$ implies
	$$ \A_{\fin} = \Q + d_{\gamma} \widehat{\Z} \quad \Rightarrow \quad \A_{\fin} = \sideset{}{_{\alpha}} \bigsqcup \alpha + d_{\gamma}\widehat{\Z} \quad \Rightarrow \quad \Q \backslash \A = (d_{\gamma}\Z \backslash \R) \times d_{\gamma}\widehat{\Z}, $$
where $\alpha$ runs over a system of representatives for $\Q/d_{\gamma}\Z$. Hence
\begin{align*}
	\varphi_{\gp{N}}(g,\gamma) &= \int_{d_{\gamma}\Z \backslash \R} \int_{d_{\gamma}\widehat{\Z}} \varphi(n(u)g,n(z)\gamma) dz du = \norm[d_{\gamma}]^{-1} \int_{d_{\gamma}\Z \backslash \R} \varphi(n(u)g,\gamma) du \\
	&= \norm[d_{\gamma}]^{-1} \int_{d_{\gamma}\Z \backslash \R} \varphi(\gamma^{-1} n(u)g,1_{\fin}) du, \quad \forall g \in \PGL_2(\R).
\end{align*}
	On the other hand, the function
	$$ f_{\gamma}(g) := f(\gamma^{-1}g) = \varphi(\gamma^{-1}g, 1_{\fin}) $$
	is a modular form for the lattice $\gamma \widetilde{\Gamma} \gamma^{-1}$. Since
	$$ \gamma \widetilde{\Gamma} \gamma^{-1} \cap \gp{N}(\Q) = \gamma \gp{K}_{\Gamma} \gamma^{-1} \cap \gp{N}(\Q) = \gp{N}(d_{\gamma}\widehat{\Z}) \cap \gp{N}(\Q) = \gp{N}(d_{\gamma}\Z), $$
	the normalized constant term at the cusp $\infty$ of $f_{\gamma}$ is
	$$ f_{\gamma,\gp{N}}(g) := \norm[d_{\gamma}]^{-1} \int_{d_{\gamma}\Z \backslash \R} f_{\gamma}(n(x) g) dx = \varphi_{\gp{N}}(g,\gamma). $$
	Write $f_{\gp{N}} = f_{1,\gp{N}}$. Since both functions
	$$ \A^{\times} \mapsto \C, \quad y \mapsto \varphi(a(y)) \quad \text{and} \quad y \mapsto \varphi_{\gp{N}}(a(y)) $$
	are left-$\Q^{\times}$ right-$\widehat{\Z}^{\times}$ invariant, and since by class number $1$ of $\Q$
	$$ \Q^{\times} \backslash \A^{\times} / \widehat{\Z}^{\times} \simeq \{ \pm 1 \} \backslash \R^{\times} \simeq \R_{>0}, $$
	we can compute the zeta functional as
	$$ \zeta(s,1,\varphi) = \int_0^{\infty} (\varphi - \varphi_{\gp{N}})(a(t),1_{\fin}) t^{s-1/2} d^{\times}t = \int_0^{\infty} (f - f_{\gp{N}})(a(t)) t^{s-1/2} d^{\times}t. $$
	
	Write $e_0 \in H := \Ind_{\gp{B}(\A) \cap \gp{K}}^{\gp{K}} 1$ for the constant function taking value $1$ on $\gp{K}$. The normalized Eisenstein series $\eis^*(s,e_0)$ corresponds to the usual normalized real analytic Eisenstein series for the full modular group
	$$ \eis^*(s,z) := \Lambda(1+2s) \sideset{}{_{\substack{(c,d)=1 \\ c,d \in \Z}}} \sum \frac{y^{1/2+s}}{\norm[cz+d]^{1+2s}}. $$
	Introducing, first for $\Re s \gg 1$ then by analytic continuation using the second expression, the function
	$$ \BesselK_{\infty}(s,y) := \Gamma_{\R}(1+2s) \norm[y]^{1/2-s} \int_{\R} \frac{e(-uy)}{(1+u^2)^{1/2+s}} du = \norm[y]^{1/2-s} \int_0^{\infty} e^{-\pi \left( t+\frac{y^2}{t}  \right)} t^s d^{\times}t, \quad y \in \R, $$
	one computes the Fourier expansion at $\infty$ as
\begin{align*}
	(\eis^*-\eis_{\gp{N}}^*)(s,z) &= (\eis^*-\eis_{\gp{N}}^*)(s,e_0) \left( \begin{pmatrix} y & x \\ & 1 \end{pmatrix}, 1_{\fin} \right) \\
	&= \sideset{}{_{0 \neq n \in \Z}} \sum e(nx) \BesselK_{\infty}(s,ny) \cdot \norm[n]^{-1/2} \sideset{}{_{p^e \parallel n}} \prod \frac{p^{(e+1)s} - p^{-(e+1)s}}{p^s-p^{-s}} \\
	&= \sideset{}{_{0 \neq n \in \Z}} \sum e(nx) \BesselK_{\infty}(s,ny) \cdot \norm[n]^{s-1/2} \sigma_{-2s}(\norm[n]),
\end{align*}
	where $\sigma_s(m) := \sideset{}{_{d \mid m}} \sum d^s$ is the usual divisor sum function. Hence the zeta-functional is equal to
\begin{align*}
	\zeta(s,1,\eis^*(s_0,e_0)) &= \int_0^{\infty} \sideset{}{_{0 \neq n \in \Z}} \sum \BesselK_{\infty}(s_0,ny) \cdot \norm[n]^{s_0-1/2} \sigma_{-2s_0}(\norm[n]) y^{s-1/2} d^{\times}t \\
	&= 2 \sideset{}{_{n=1}^{\infty}} \sum \sigma_{-2s_0}(n) \int_0^{\infty} \int_0^{\infty} e^{-\pi \left( t+\frac{n^2y^2}{t} \right)} t^{s_0} y^{s-s_0} d^{\times}t d^{\times}y \\
	&= \sideset{}{_{n=1}^{\infty}} \sum \sigma_{-2s_0}(n) n^{s_0-s} \int_0^{\infty} \int_0^{\infty} e^{-\pi \left( t+y \right)} t^{\frac{s+s_0}{2}} y^{\frac{s-s_0}{2}} d^{\times}t d^{\times}y \\
	&= \Gamma_{\R}(s-s_0) \Gamma_{\R}(s+s_0) \zeta(s-s_0) \zeta(s+s_0) = \Lambda(s-s_0) \Lambda(s+s_0).
\end{align*}
	Proposition \ref{LocMainEst} studies a variant of the above equality by replacing $s$ with $1/2+i\mu$, $\eis^*(s_0,e_0)$ with $n(T).\eis^*(0,e_0)$ for some $T \in \R, \norm[T] \asymp \norm[\mu]$. Together with Lemma \ref{TruncEst}, we are reduced to bounding
	$$ \int_0^{\infty} h(y) \cdot (n(T).\eis^*)(0,iy) d^{\times}y, $$
	where $h$ is a positive function with support in $[\norm[\mu]^{-\kappa-1}, \norm[\mu]^{\kappa-1}]$. We would like to apply C-S
	$$ \extnorm{ \int_0^{\infty} h(y) \cdot (n(T).\eis^*)(0,iy) d^{\times}y }^2 \leq \int_0^{\infty} h(y) d^{\times}y \cdot \int_0^{\infty} h(y) \cdot (n(T).\norm[\eis^*]^2)(0,iy) d^{\times}y $$
	and Fourier inverse $\norm[\eis^*(0,z)]^2$, which is not square integrable. Michel \& Venkatesh's idea is to Fourier inverse the restriction of $\norm[\eis^*(0,z)]^2$ on a large compact region containing the domain of integration, which fails to give the Burgess-like quality because the $\intL^2$-norm of such restriction depends polynoimally on $\mu$. Our idea is to find $\Reis$, some linear combination of derivatives of Eisenstein series such that $\norm[\eis^*(0,z)]^2 - \Reis$ comes back to $\intL^2$. The existence of $\Reis$ is due to Zagier \cite{Za82} and our extension \cite[\S 5]{Wu9}. Hence we can estimate
\begin{align*}
	&\quad \int_0^{\infty} h(y) \cdot (n(T).\norm[\eis^*]^2)(0,iy) d^{\times}y \\
	&= \int_0^{\infty} h(y) \cdot (n(T).(\norm[\eis^*]^2 - \Reis) )(0,iy) d^{\times}y + \int_0^{\infty} h(y) \cdot (n(T).\Reis)(iy) d^{\times}y
\end{align*}
	the two terms on the RHS seprately. To the first term we apply Fourier inversion to $\norm[\eis^*]^2 - \Reis$. For the second, we compute directly. The advantage over Michel \& Venkatesh's approach is that the $\intL^2$-norm of $\norm[\eis^*]^2 - \Reis$ is essentially constant, while the second term contributes no more than the constant part of the the first term.
	
	The truth is a little more complicated due to the large contribution of the one-dimensional part of $\norm[\eis^*]^2 - \Reis$. Hence we need to use the method of amplification. We shall not be precise on the exact form of amplification since the goal is to give an idea how things look like. If $p$ is a prime regarded as the uniformizer of $\Q_p^{\times}$ embedded in $\A_{\fin}^{\times}$, then
	$$ \zeta(s,1,a(p^{-1}).\eis^*(s_0,e_0)) = \int_0^{\infty} (\eis^* - \eis_{\gp{N}}^*)(s_0,ipy) y^{s-1/2} d^{\times}y, $$
	from which we deduce that
	$$ \zeta(1/2+i\mu,1,a(p^{-1}).\eis^*(0,e_0)) = p^{-i\mu} \zeta(1/2+i\mu,1,\eis^*(0,e_0)). $$
	We can replace $\eis^*(0,e_0)$ with some balanced average of $p^{i\mu}a(p^{-1}).\eis^*(0,e_0)$ without affecting the integral representation and Lemma \ref{TruncEst} given above, since $a(p^{-1})$ commutes with the $n(T), T \in \R$. After applying C-S, we are lead to Fourier inversing
	$$ a(p^{-1}).\eis^*(0,e_0) \cdot \overline{\eis^*(0,e_0)} \quad \text{or} \quad \eis^*(0,pz) \cdot \overline{\eis^*(0,z)}. $$
	In the classical setting, we have regarded all forms from $\PSL_2(\Z) \backslash \pH$ to $\Gamma_0(p) \backslash \pH$. Hence we need to find $\Reis(p)$ some linear combination of derivatives of Eisenstein series for $\Gamma_0(p)$, such that
	$$ \varphi_0(p)(z) := \eis^*(0,pz) \cdot \overline{\eis^*(0,z)} - \Reis(p)(z) $$
has slow increase at every cusp of $\Gamma_0(p)$. This is of course feasible in the classical setting, but with painful computations. The more convenient adelic computation goes as follows.
\begin{definition}
	For $e \in H := \Ind_{\gp{B}(\A) \cap \gp{K}}^{\gp{K}} 1$, we write $e_s \in \Ind_{\gp{B}(\A)}^{\GL_2(\A)}(\norm_{\A}^s, \norm_{\A}^{-s})$ for the flat section associated with it. For any $s_0 \in \C, n \in \N$ we write
	$$ e_{s_0}^{(n)} := \frac{\partial^n}{\partial s^n} \mid_{s=s_0} e_s. $$
\end{definition}
\noindent It is easy to verify
	$$ e_{0,0}^{(n)} \cdot e_0^{(m)} = e_{1/2}^{(n+m)}, \quad \forall e \in H, \forall n,m \in \N. $$
	Let $e_1 \in H$ admitting the same local component as $e_0$ at places $\infty$ and $q \neq p$. At $p$ we take its local component as the $e_1$ in Lemma \ref{BCRepN}. Precisely,
	$$ e_1 \mid_{\GL_2(\Z_p) - \gp{K}_0(p)} = -p^{-1/2}, \quad e_1 \mid_{\gp{K}_0(p)} = p^{1/2}. $$
	$\eis(s,e_1)$ corresponds to an Eisenstein series for $\Gamma_0(p)$ given by
\begin{align*}
	\eis_1(s,z) &= \eis(s,e_1) \left( \begin{pmatrix} y & x \\ & 1 \end{pmatrix}, 1_{\fin} \right) = \sideset{}{_{\gamma \in \gp{B}(\Q) \backslash \PGL_2(\Q)}} \sum \sum e_{1,s}\left( \gamma \left( \begin{pmatrix} y & x \\ & 1 \end{pmatrix}, 1_{\fin} \right) \right) \\
	&= \sideset{}{_{\gamma \in \gp{N}(\Z) \backslash \widetilde{\Gamma_0(p)}}} \sum e_{1,s}\left( \gamma \left( \begin{pmatrix} y & x \\ & 1 \end{pmatrix}, 1_{\fin} \right) \right) \\
	&\quad + \sideset{}{_{\gamma \in \gp{N}_-(p\Z) \backslash \widetilde{\Gamma_0(p)}}} \sum e_{1,s}\left( \begin{pmatrix} & 1 \\ 1 & \end{pmatrix} \gamma \left( \begin{pmatrix} y & x \\ & 1 \end{pmatrix}, 1_{\fin} \right) \right)
\end{align*}
	since we know the set $\{ 0,\infty \}$ of cusps for $\widetilde{\Gamma_0(p)}$ as
	$$ \PGL_2(\Q) = \gp{B}(\Q) \widetilde{\Gamma_0(p)} \sqcup \gp{B}(\Q) \begin{pmatrix} & 1 \\ 1 & \end{pmatrix} \widetilde{\Gamma_0(p)}. $$
	We insert the value of $e_{1,s}$ to obtain
\begin{align*}
	\eis_1(s,z) &= p^{1/2} \sideset{}{_{\gamma \in \gp{N}(\Z) \backslash \widetilde{\Gamma_0(p)}}} \sum \Im(\gamma.z)^{1/2+s} - p^{-1/2} \sideset{}{_{\gamma \in \gp{N}_-(p\Z) \backslash \widetilde{\Gamma_0(p)}}} \sum \Im(\begin{pmatrix} & 1 \\ 1 & \end{pmatrix} \gamma.z)^{1/2+s}.
\end{align*}
	Note that it is regular at $s=1/2$. We now turn to the determination of $\Reis(p)$. Using the Laurent expansion of $\Lambda$ in (\ref{DedkindZExp}), we can write the constant term as
	$$ \eis_{\gp{N}}^*(0,e_0) = 2 \gamma_{\Q} e_{0,0} + \Lambda_{\Q}^* e_{0,0}^{(1)}. $$
	Computing the constant term of $a(p^{-1}).\eis^*(0,e_0)$ is convenient only if we can express $a(p^{-1})e_{0,s}$ as linear combination of flat sections. This is done in Lemma \ref{BaseCNA}, yielding
	$$ a(p^{-1}).e_{0,s} = \frac{p^{s+1/2} - p^{-(s+1/2)}}{p^{1/2} + p^{-1/2}} e_{1,s} + \frac{p^s + p^{-s}}{p^{1/2} + p^{-1/2}} e_{0,s}, $$
	$$ a(p^{-1}).e_{0,0}^{(1)} = e_{0,0}^{(1)} + \frac{p^{1/2} - p^{-1/2}}{p^{1/2} + p^{-1/2}} e_{1,0}^{(1)} + \frac{p^{1/2} - p^{-1/2}}{p^{1/2} + p^{-1/2}} (\log p) e_{0,0} + (\log p) e_{1,0}. $$
	We thus obtain
\begin{align*}
	&\quad (a(p^{-1}).\eis^*)_{\gp{N}}(0,e_0) \cdot \overline{\eis_{\gp{N}}^*(0,e_0)} \\
	&= \norm[\Lambda_{\Q}^*]^2 e_{0,1/2}^{(2)} +  \norm[\Lambda_{\Q}^*]^2 \frac{p^{1/2} - p^{-1/2}}{p^{1/2} + p^{-1/2}} e_{1,1/2}^{(2)} \\
	&+ \left( \norm[\Lambda_{\Q}^*]^2 \frac{p^{1/2} - p^{-1/2}}{p^{1/2} + p^{-1/2}} (\log p) + 2 \overline{\gamma_{\Q}} \Lambda_{\Q}^* + 2 \gamma_{\Q} \overline{\Lambda_{\Q}^*} \right) e_{0,1/2}^{(1)} \\
	&+ \left( \norm[\Lambda_{\Q}^*]^2 (\log p) + 2 \overline{\gamma_{\Q}} \Lambda_{\Q}^* \frac{p^{1/2} - p^{-1/2}}{p^{1/2} + p^{-1/2}} + 2 \gamma_{\Q} \overline{\Lambda_{\Q}^*} \frac{p^{1/2} - p^{-1/2}}{p^{1/2} + p^{-1/2}} \right) e_{1,1/2}^{(1)} \\
	&+ \left( 2 \overline{\gamma_{\Q}} \Lambda_{\Q}^* \frac{p^{1/2} - p^{-1/2}}{p^{1/2} + p^{-1/2}} (\log p) + 4 \norm[\gamma_{\Q}]^2 \right) e_{0,1/2} \\
	&+ \left( 2 \overline{\gamma_{\Q}} \Lambda_{\Q}^* (\log p) + 4 \norm[\gamma_{\Q}]^2  \frac{p^{1/2} - p^{-1/2}}{p^{1/2}} \right) e_{1,1/2}.
\end{align*}
	Writing for $\eis(s,z)$ the usual (non-completed) real analytic Eisenstein series and for $n \in \N$
	$$ \eis^{\reg,(n)}(1/2,z) := \left. \frac{\partial^n}{\partial s^n} \right|_{s=1/2} \left( \eis(s,z) - \frac{1}{s-1/2} \right), \quad \eis_1^{(n)}(1/2,z) = \left. \frac{\partial^n}{\partial s^n} \right|_{s=1/2} \eis_1(s,z), $$
	we deduce via the adelic-classical correspondence of constant terms that
\begin{align*}
	\Reis(p)(z) &:= \norm[\Lambda_{\Q}^*]^2 \eis^{\reg,(2)}(1/2,z) +  \norm[\Lambda_{\Q}^*]^2 \frac{p^{1/2} - p^{-1/2}}{p^{1/2} + p^{-1/2}} \eis_1^{(2)}(1/2,z) \\
	&+ \left( \norm[\Lambda_{\Q}^*]^2 \frac{p^{1/2} - p^{-1/2}}{p^{1/2} + p^{-1/2}} (\log p) + 2 \overline{\gamma_{\Q}} \Lambda_{\Q}^* + 2 \gamma_{\Q} \overline{\Lambda_{\Q}^*} \right) \eis^{\reg,(1)}(1/2,z) \\
	&+ \left( \norm[\Lambda_{\Q}^*]^2 (\log p) + 2 \overline{\gamma_{\Q}} \Lambda_{\Q}^* \frac{p^{1/2} - p^{-1/2}}{p^{1/2} + p^{-1/2}} + 2 \gamma_{\Q} \overline{\Lambda_{\Q}^*} \frac{p^{1/2} - p^{-1/2}}{p^{1/2} + p^{-1/2}} \right) \eis_1^{(1)}(1/2,z) \\
	&+ \left( 2 \overline{\gamma_{\Q}} \Lambda_{\Q}^* \frac{p^{1/2} - p^{-1/2}}{p^{1/2} + p^{-1/2}} (\log p) + 4 \norm[\gamma_{\Q}]^2 \right) \eis^{\reg}(1/2,z) \\
	&+ \left( 2 \overline{\gamma_{\Q}} \Lambda_{\Q}^* (\log p) + 4 \norm[\gamma_{\Q}]^2  \frac{p^{1/2} - p^{-1/2}}{p^{1/2}} \right) \eis_1(1/2,z)
\end{align*}
	does the job.
	
	With the above construction, we then showed that the (re-normalized) $\intL^2$-norm of $\varphi_0(p)$ on $\Gamma_0(p) \backslash \pH$ is $\ll (\log p)^3$, as well as its derivatives with respect to the Lie algebra of $\PGL_2(\R)$. Its Fourier inversion
\begin{align*}
	\varphi_0(p)(z) &= \frac{\Pairing{\varphi_0(p)}{1}}{\Vol(\Gamma_0(p) \backslash \pH)} \cdot 1 + \sideset{}{_{f \text{ cusp form for } \Gamma_0(p)}} \sum C_p(f) f(z) \\
	&\quad + \int_{-\infty}^{\infty} C_p(0,i\tau) \eis(i\tau,z) \frac{d\tau}{4\pi} + \int_{-\infty}^{\infty} C_p(1,i\tau) \eis_1(i\tau,z) \frac{d\tau}{4\pi} \\
	&= \varphi_0(p)_{\gp{N}}(z) + \sideset{}{_{f \text{ cusp form for } \Gamma_0(p)}} \sum C_p(f) f(z) \\
	&\quad + \int_{-\infty}^{\infty} C_p(0,i\tau) (\eis-\eis_{\gp{N}})(i\tau,z) \frac{d\tau}{4\pi} + \int_{-\infty}^{\infty} C_p(1,i\tau) (\eis_1-\eis_{1,\gp{N}})(i\tau,z) \frac{d\tau}{4\pi} 
\end{align*}
	with Fourier coefficients $C_p(f),  C_p(0,i\tau), C_p(1,i\tau) \in \C$, converges in uniformly in any compact (for the first version) resp. Siegel domain (for the second). Thus
\begin{align*}
	\zeta(h,n(T).\varphi_0(p)) &:= \int_0^{\infty} h(y) \cdot \varphi_0(p)(iy) d^{\times}y \\
	&= \int_0^{\infty} h(y) \cdot \varphi_0(p)_{\gp{N}}(iy) d^{\times}y + \sideset{}{_{f}} \sum C_p(f) \int_0^{\infty} h(y) \cdot f(iy) d^{\times}y \\
	&\quad + \int_{-\infty}^{\infty} C_p(0,i\tau) \int_0^{\infty} h(y) \cdot (\eis-\eis_{\gp{N}})(i\tau,iy) d^{\times}y \frac{d\tau}{4\pi} \\
	&\quad + \int_{-\infty}^{\infty} C_p(1,i\tau) \int_0^{\infty} (\eis_1-\eis_{1,\gp{N}})(i\tau,iy) d^{\times}y \frac{d\tau}{4\pi} 
\end{align*}
	and we estimate the RHS term by term.

\section*{Acknowledgement}

	The preparation of the paper scattered during the stays of the author's in FIM at ETHZ, YMSC at Tsinghua University, Alfr\'ed Renyi Institute in Hungary supported by the MTA R\'enyi Int\'ezet Lend\"ulet Automorphic Research Group and in TAN at EPFL. The author would like to thank all four institutes for their hospitality. The author would like to thank Professor Paul Nelson for useful discussions. The author would also like to thank Professor Philippe Michel, his former thesis adviser, for his encouragement and enlightening strategic argument. The author would like to thank Dr. Bingrong Huang for bringing the paper of Soehne \cite{Soe97} into his attention. The author is grateful for the referees' detailed check and suggestions.

\bibliographystyle{acm}
	
\bibliography{mathbib}	
	
\address{\quad \\ Han WU \\ MA C3 604 \\ EPFL SB MATHGEOM TAN \\ CH-1015, Lausanne \\ Switzerland \\ wuhan1121@yahoo.com}
	
\end{document}